  \newtheorem{theorem}{Theorem}[section]
  \newtheorem{corollary}[theorem]{Corollary}
  \newtheorem{proposition}[theorem]{Proposition}
  \newtheorem{lemma}[theorem]{Lemma}
  \newtheorem*{theorem*}{Theorem}
  \theoremstyle{definition}
  \newtheorem{definition}[theorem]{Definition}
  \theoremstyle{remark}
  \newtheorem{remark}[theorem]{Remark}
  \renewenvironment{proof}[1][Proof]{\noindent\textit{#1.} }{\hfill 
\rule{0.5em}{0.5em}}
  \newcommand {\N} {\mathbbm{N}}
  \newcommand {\Z} {\mathbbm{Z}}
  \newcommand {\R} {\mathbbm{R}}
  \newcommand {\eps} {\varepsilon}
  \renewcommand{\S}{\mathcal{S}}
  \renewcommand{\d}{\mathrm{d}}
  \newcommand{\dxy} {\, \d (x,y)}
  \newcommand{\dst} {\, \d (s,t)}
  \newcommand{\notion}[1]{\emph{#1}}
  \newcommand{\BBB}{B}
  \newcommand{\RRR}{\mathbbm{R}}
  \newcommand{\ZZZ}{\mathbbm{Z}}
  \newcommand{\newvariable}[2]{%
    \newcommand{#1}[1][]{#2_{##1}}%
  }
  \newvariable{\TheConf}{\Gamma}
  \newcommand{\TheConfOf}[1]{\TheConf(#1)}
  \newcommand{\mapcolon}{\colon}
  \newvariable{\DirGraph}{G}
  \newcommand{\DirGraphOf}[1]{\DirGraph(#1)}
  \newvariable{\ThePoint}{x}
  \newvariable{\WellConnectedPt}{\hat{\ThePoint}}
  \newvariable{\AltPoint}{y}
  \newvariable{\ThrPoint}{z}
  \newvariable{\TheOpenSet}{U}
  \newvariable{\TheOpenNbhd}{W}
  \newvariable{\TheSmallOpenSet}{U'}
  \newcommand{\TheDoubleCone}{V}
  \newvariable{\AltDoubleCone}{V'}
  \newcommand{\ShrinkDoubleCone}[1]{\TheDoubleCone_{#1}}
  \newcommand{\TheCone}{\widetilde{\TheDoubleCone}}
  \newcommand{\LargeCone}{\overline{\TheDoubleCone}}
  \newvariable{\SetDoubleCones}{\mathcal{V}}
  \newvariable{\TheDim}{d}
  \newvariable{\TheApex}{\vartheta}
  \newvariable{\TheSpace}{\RRR^{\TheDim}}
  \newcommand{\RestrGraph}{\DirGraph[\TheOpenSet]}
  \newcommand{\SmallRestrGraph}{\DirGraph[\TheSmallOpenSet]}
  \newcommand{\CardOf}[1]{\##1}
  \newvariable{\TheFactor}{\lambda}
  \newcommand{\NormOf}[1]{\|#1\|}
  \newcommand{\BallOf}[2][]{\BBB_{#1}(#2)}
  \newvariable{\TheRadius}{r}
  \newvariable{\TheLattice}{\ZZZ^{\TheDim}}
  \newvariable{\SmallRadius}{r}
  \newvariable{\SmallerRadius}{r'}
  \newvariable{\LargeRadius}{R}
  \newvariable{\ChromaticBound}{\rho}
  \newcommand{\SqrtOf}[1]{\sqrt{#1}}
  \newcommand{\SinOf}[1]{\operatorname{sin}(#1)}
  \newcommand{\TranslateCone}[2]{\TheDoubleCone^{#2}[#1]}
  \newcommand{\BasedCone}[1]{\TranslateCone{#1}{\TheConf}}
  \newcommand{\intersect}{\cap}
  \newvariable{\NumTypes}{k}
  \newvariable{\AltSmall}{s}
  \newvariable{\AltLarge}{S}
  \newvariable{\WellSmall}{\hat{\AltSmall}}
  \newvariable{\WellLarge}{\hat{\AltLarge}}
  \newvariable{\WellRadius}{\hat{\LargeRadius}}
  \newvariable{\JumpMax}{\delta}
  \newcommand{\Enlarge}[1]{#1+\frac{2#1+\SqrtOf{\TheDim}}{\SinOf{\TheApex}}}
  \newvariable{\Cube}{A}
  \newcommand{\CubeOf}[2][]{\Cube[#1](#2)}
  \newvariable{\TheBlock}{Q}
  \newvariable{\AltBlock}{P}
  \newvariable{\Block}{{\TheBlock}}
  \newcommand{\BlockOf}[2][]{\Block[#1](#2)}
  \newvariable{\TheLength}{\ell}
  \newvariable{\TheBall}{B}
  \newcommand{\InfNormOf}[1]{\|#1\|_{\infty}}
  \newcommand{\SetOf}[2][]{%
    \left\{
      #1 \,|\, #2
    \right\}%
  }
  \newcommand{\FamOf}[2][]{\left(#1\right)_{#2}}
  \newvariable{\TheScale}{h}
  \newvariable{\TheTown}{T}
  \newvariable{\Town}{{\TheTown}}
  \newvariable{\TownConf}{W}
  \newcommand{\TownConfOf}[2][]{\TownConf[#1](#2)}
  \newcommand{\TownOf}[2][]{\Town[#1](#2)}
  \newvariable{\TheCubeDistance}{\delta}
  \newcommand{\TheCubeDistanceOf}[2][]{\TheCubeDistance[#1](#2)}
  \newvariable{\ThePath}{p}
  \newvariable{\BdNumJumps}{N}
  \newvariable{\BdEdgeUsage}{M}
  \newcommand{\DistOf}[2]{|#1-#2|}
  \newvariable{\TheScaleStep}{\Delta}
  \newvariable{\LogScale}{n}
  \newcommand{\TheScaleLog}{\TheScaleStep^{\LogScale}}
  \newcommand{\TheScaleLogmin}{\TheScaleStep^{\LogScale-1}}
  \newvariable{\FirstJump}{{R_1}}
  \newvariable{\LengthChain}{l}
  \newvariable{\TheVoter}{v}
  \newvariable{\TheIndex}{i}
  \newcommand{\minDist}{R_0}
  \newvariable{\StepComparability}{\lambda}
  \newvariable{\InfApex}{\vartheta}
  \newcommand{\squish}[1]{\null\negthinspace#1\negthinspace\null}
  \newcommand{\squash}[1]{\squish{\squish{\squish{\squish{#1}}}}}
  \newcommand{\Squash}[1]{\squish{\squish{\squish{\squish{\squish{\squish{\squish{#1}}}}}}}}
  \newcommand{\qcolon}{\,:\,}
\begin{document}

\title{Quadratic forms and Sobolev spaces of fractional order}

  \author{Kai-Uwe Bux}
  \email{bux\_{}math@kubux.net}
  \address{Universit\"{a}t Bielefeld, Fakult\"{a}t f\"{u}r Mathematik, Postfach 
  100131, D-33501 Bielefeld, Germany}
  
  \author{Moritz Kassmann}
  \email{moritz.kassmann@uni-bielefeld.de}
  \address{Universit\"{a}t Bielefeld, Fakult\"{a}t f\"{u}r Mathematik, Postfach 
  100131, D-33501 Bielefeld, Germany}
  
  \author{Tim Schulze}
  \email{tschulze@math.uni-bielefeld.de}
  \address{Universit\"{a}t Bielefeld, Fakult\"{a}t f\"{u}r Mathematik, 
  Postfach 
  100131, D-33501 Bielefeld, Germany}
  
\keywords{fractional Sobolev spaces, comparability of seminorms, nonlocal 
Dirichlet forms, discrete approximation of integral forms, Boltzmann equation}

\subjclass[2010]{47G20, 46E35, 60J65}

\date{\today}

\thanks{Financial support by the DFG through the CRCs 701 and 1283 is acknowledged.}

  \begin{abstract}
    We study quadratic functionals on $L^2(\R^d)$ that generate seminorms in 
the fractional Sobolev space $H^s(\R^d)$ for $0 < s < 1$. The functionals under 
consideration appear in the study of Markov jump processes and, 
independently, in recent research on the Boltzmann equation. The functional 
measures differentiability of a function $f$ in a similar way as the seminorm of 
$H^s(\R^d)$. The major difference is that differences $f(y) - f(x)$ are taken 
into account only if $y$ lies in some double cone with apex at $x$ or vice 
versa. The configuration of double   cones is allowed to be inhomogeneous 
without any assumption on the spatial regularity. We prove that the resulting 
seminorm is comparable to the standard one of $H^s(\R^d)$. The proof follows 
from a similar result on discrete quadratic forms in $\Z^d$, which is our second 
main result. We establish a general scheme for discrete 
approximations of nonlocal quadratic forms. Applications to Markov jump 
processes are discussed. 
  \end{abstract}
  
  \maketitle

\section{Introduction}\label{sec:intro}

The Sobolev-Slobodecki\u{\i} space $H^s(\R^d)$, $0 < s < 1$, can be defined as 
the set of all functions $f \in L^2(\R^d)$ such that the seminorm
\begin{align}\label{eq:seminorm_Hs}
\int\limits_{\R^d \times \R^d} \big(f(y)-f(x) \big)^2 |x-y|^{-d-2s} \dxy 
\end{align}
is finite, see the original work \cite{Slo58} or the monographs 
\cite{RuSi96, AdFo03, Agr15}. The normed space is complete and, together with 
its modifications for domains in $\R^d$, is of fundamental importance in the 
field of Partial Differential Equations. In this article, we adopt 
the common notation from Stochastic Analysis, where $H^s(\R^d)$ usually is 
denoted by $H^{\alpha/2}(\R^d)$ with $\alpha = 2s \in (0,2)$. The 
corresponding stochastic process is called $\alpha$-stable process, which 
explains the usage of $\alpha$ here. 

We study seminorms on $L^2(\R^d)$, which are very similar 
but smaller than \eqref{eq:seminorm_Hs} because we consider differences $f(y) - 
f(x)$ only if $y$ lies in some double cone with apex at $x$. Below, we explain
where and why the corresponding quadratic forms appear naturally. In order to 
formulate our main result, let us fix some notation. By $V$ 
we denote a double cone in $\R^d$ with apex at $0 \in \R^d$, symmetry axis $v 
\in \R^d$, and apex angle $\vartheta \in (0,\frac{\pi}{2}]$. Let 
$\mathcal{V}= (0,\frac{\pi}{2}] \times \mathbb{P}_\R^{d-1}$ 
denote the family of all such double cones. For $x 
\in \R^d$, we define a shifted double cone by $V[x] = V + x$. A mapping 
$\Gamma: \R^d \to \mathcal{V}$ is called a configuration. If $\Gamma$ is a 
configuration with the property that the infimum $\InfApex$ over all apex 
angles 
of cones in $\Gamma(\R^d)$ is positive, then $\Gamma$ is called 
$\InfApex$-bounded. If in addition
\begin{align}\tag{M}\label{eq:mbc}
  \{\,
    (x,y) \in \R^d \times \R^d 
    \,\,|\,\,
    y-x \in \Gamma(x)
  \,\} 
\text{ is a Borel set in }\R^d \times \R^d,
\end{align}
then $\Gamma$ is called 
$\InfApex$-admissible.
For $x \in \R^d$ and $\Gamma$ a configuration, we define
$V^\Gamma [x] = x + \Gamma(x)$.

One of our main results is the following theorem.

\begin{theorem}\label{theo:main} Let $\Gamma$ be a $\InfApex$-admissible 
configuration and $\alpha \in (0,2)$. Let $k:\R^d \times \R^d \to [0,\infty]$ 
be a measurable function satisfying $k(x,y) = k(y,x)$ and 
\begin{align}\label{assum:main}
  \Lambda^{-1}
  \big( \mathbbm{1}_{V^\Gamma[x]}(y) + \mathbbm{1}_{V^\Gamma[y]}(x)\big) |x-y|^{-d-\alpha}
  \,\,\leq\,\,
  k(x,y)
  \,\,\leq\,\,
  \Lambda |x-y|^{-d-\alpha},
\end{align}
for all $x$ and $y$, where $\Lambda \geq 1$ is some constant.
Then there is a constant $c \geq 1$ such  that for every
ball $B \subset \R^d$ and for every $f \in L^2(B)$, the inequality
\begin{align}\label{eq:main-result}
  \int\limits_{B \times B} (f(x)-f(y))^2|x-y|^{-d-\alpha} \dxy 
  \,\,\leq\,\,
  c \int\limits_{B \times B} (f(x)-f(y))^2k(x,y) \dxy
\end{align}
holds.

The constant $c$ depends on $\Lambda$, the dimension $d$ and $\InfApex$. It is 
independent of $k$ and $\Gamma$.  For $0 < 
\alpha_0 \leq \alpha <2$, the constant $c$ depends on $\alpha_0$ but not on 
$\alpha$. 
\end{theorem}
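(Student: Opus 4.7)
The inequality is pointwise for pairs $(x,y)\in B\times B$ satisfying the cone condition, that is, $y\in V^\Gamma[x]$ or $x\in V^\Gamma[y]$: \eqref{assum:main} then gives $|x-y|^{-d-\alpha}\leq \Lambda k(x,y)$ directly. The entire challenge is to control the contribution of the \emph{bad pairs}, those for which the lower bound in \eqref{assum:main} is vacuous, by the integral on the right of \eqref{eq:main-result}, which only sees the good pairs.

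The plan is to detour each bad pair through a short chain $x=z_0,z_1,\ldots,z_n=y$ so that every consecutive pair $(z_j,z_{j+1})$ is a good pair and $|z_j-z_{j+1}|\lesssim|x-y|$. Writing $f(x)-f(y)=\sum_{j=0}^{n-1}(f(z_j)-f(z_{j+1}))$ and applying Cauchy--Schwarz gives
\[
   (f(x)-f(y))^2\ \leq\ n\sum_{j=0}^{n-1}(f(z_j)-f(z_{j+1}))^2.
\]
I would then average this inequality against a carefully designed measure $\mu_{x,y}$ on chains and invoke Fubini. The measure must be arranged so that, for each fixed $j$, the push-forward of $|x-y|^{-d-\alpha}\dxy\otimes\mu_{x,y}$ to the pair $(z_j,z_{j+1})$ is dominated by $c\,k(z_j,z_{j+1})\,\d z_j\d z_{j+1}$. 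The good-pair property of every $(z_j,z_{j+1})$, combined with the lower bound in \eqref{assum:main}, is precisely what makes this domination possible.

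Because $\Gamma$ carries no regularity beyond \eqref{eq:mbc}, I would not attempt to choose chains pointwise but descend to the discrete scheme announced in the abstract. The strategy is to partition $\R^d$ into a dyadic family of lattice cubes at scales $\Delta^n$, replace $f$ by its block averages, and recast the chaining task as a graph-theoretic problem on $\Z^d$: build a directed graph $G$ whose vertices are blocks and whose edges encode the cone relation between blocks, and then find short paths in $G$ between any source and target block. Within $G$ one singles out the \emph{well-connected} blocks (those with cone-neighbors in many directions), groups them into \emph{towns}, and links arbitrary blocks by a multi-scale/renormalization argument that first routes to a town, hops between towns at successively coarser scales, and descends again to the target; this yields chains of bounded length $n$ and bounded edge-multiplicity.

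The principal obstacle is exactly this graph-theoretic construction under no regularity hypothesis on $\Gamma$: one must rely solely on the apex lower bound $\vartheta$ to guarantee both the density of towns and the possibility of linking them. The uniform apex bound ensures that $\Gamma(x)$ subtends a solid angle bounded below in terms of $\vartheta$ and $d$, whence the density of well-connected blocks follows by a pigeonhole argument; but assembling multi-scale chains with constants depending only on $d$, $\vartheta$ and $\Lambda$ is delicate. A further subtlety is keeping the chain length bounded uniformly for $\alpha\in[\alpha_0,2)$, which forces the number of dyadic scales entering the renormalization to be controlled in terms of $\alpha_0$ alone; the transfer from the discrete estimate back to \eqref{eq:main-result} then uses the upper bound $k(x,y)\leq\Lambda|x-y|^{-d-\alpha}$ to absorb the block-averaging error at the finest scale.
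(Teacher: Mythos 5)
Your plan captures the paper's overall architecture accurately: reduce to a discrete problem on a lattice, treat each bad pair by chaining through good pairs, apply Cauchy--Schwarz, and control the resulting sum by bounding chain length and edge multiplicity uniformly, via a renormalization argument on blocks and towns. The paper indeed does exactly this, passing through \autoref{theo:main_discrete}, the path-system result \autoref{theo:path-props}, and a limiting argument $h\to 0$ based on block averages, Lebesgue differentiation, Fatou and dominated convergence. What you call ``averaging against a measure $\mu_{x,y}$ on chains'' is realised combinatorially in the paper as a fixed path family $(p_{xy})$ with an explicit edge-multiplicity bound $M$, which is functionally the same idea.

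However, there are two substantive gaps. First, and most importantly, you leave the central mechanism for bounding chain length unexplained, and the mechanism you do sketch -- ``hops between towns at successively coarser scales, and descends again to the target'' -- is not what the paper does and is not obviously workable under zero regularity on $\Gamma$. In the paper, each pair $(x,y)$ with $|x-y|\in[\Delta^{n-1},\Delta^n)$ is connected entirely \emph{within a single town} $T_n$; renormalization means identifying $T_n$ with $\Z^d$ and applying a base connectivity theorem at the rescaled level. The base connectivity theorem (\autoref{discrete--template}) in turn rests on a non-obvious reduction: any $\vartheta$-bounded configuration can be replaced by one taking only finitely many values (the reference cones of \autoref{lem:ref-cones} and \autoref{cor:ref-config}), and then one inducts on the \emph{number of cone types realized locally} (\autoref{discr.core-induction}). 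Without the reduction to a finite set of cone types, there is no finite induction and hence no uniform bound on chain length; a pigeonhole/solid-angle observation alone does not give it. This is the crux of the argument and it is absent from your plan.

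Second, the discrete theorem only yields a comparison of the sum over $B$ on the left with the sum over a dilated ball $B^\ast=\kappa B$ on the right. To close the argument and obtain \eqref{eq:main-result} with the same ball on both sides you still need a Whitney-decomposition step (the paper's \autoref{lem:new6.9}, adapted from Dyda--Kassmann), which is not in your plan. Finally, a minor point: the uniformity in $\alpha\in[\alpha_0,2)$ does not come from controlling ``the number of dyadic scales''; the path construction is purely geometric and $\alpha$-independent, and the only $\alpha$-dependence enters through a single factor of the form $\lambda^{d+\alpha}$, which is bounded by $\lambda^{d+2}$.
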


Note that the reverse inequality in \eqref{eq:main-result} trivially holds 
true. 
\begin{remark}\label{comment on mbc} One strength of 
the theorem is that there are only two essential assumptions, namely, that the 
infimum of the apex angles of double cones is required to be positive and that 
the set 
\[ 
  \{\,
    (x,y) \in \R^d \times \R^d 
    \,\,|\,\,
    y-x \in \Gamma(x)
  \,\}
\]
is a Borel set in $\R^d \times \R^d$. Other than that, the 
symmetry axis of the double cone and the apex angle might 
depend on the center in an arbitrary way. Note that the last condition 
\eqref{assum: measurability} is nothing else but 
the measurability of the function $v:\R^d \times \R^d \to \R, 
v(x,y)=\mathbbm{1}_{V^\Gamma[x]}(y)$, which is important in light of 
\eqref{assum:main}.
\end{remark}

A similar result like \autoref{theo:main} has recently been provided in 
\cite[Lemma A.6]{ImSi16}. One difference between the two results is that 
\autoref{theo:main} provides comparability on every ball. This property is 
important for applications, 
e.g., for regularity results, cf. \cite[Condition (A)]{DyKa15}, and when 
studying function spaces over bounded sets, cf. 
\autoref{cor:H-are-equal}. Another difference concerns the class of cones 
considered. In our setup, 
it is generally not true that two double cones $x + \Gamma(x)$ and $y + 
\Gamma(y)$ have a nonempty intersection. This is different in the framework of 
\cite{ImSi16}, cf. Lemma A.5 therein. On the other hand, we consider classical 
double cones and not more general union of rays. 

The proof of our main result is based on discrete 
approximations of the quadratic forms involved. We establish a general scheme 
of how to approximate a given nonlocal quadratic form on $L^2(\R^d)$ through a 
sequence of discrete quadratic forms. We provide a discrete analog of 
\autoref{theo:main} that implies \autoref{theo:main}. We hope the discrete 
result itself to be useful in different fields, e.g., when studying random 
walks in inhomogeneous or random media. Let us formulate our main result in the 
discrete setup.

\begin{theorem}\label{theo:main_discrete} Let $\Gamma$ be a $\InfApex$-bounded 
configuration and $\alpha \in (0,2)$.
Let $\omega:\Z^d \times \Z^d \to [0,\infty]$ be a function 
satisfying $\omega(x,y) = \omega(y,x)$ and 
\begin{align}\label{assum:main_discrete}
  \Lambda^{-1}
    \big( \mathbbm{1}_{V^\Gamma [x]}(y) + \mathbbm{1}_{V^\Gamma [y]}(x)\big)
    |x-y|^{-d-\alpha}
  \,\,\leq\,\,
  \omega(x,y)
  \,\,\leq\,\, 
  \Lambda |x-y|^{-d-\alpha} 
\end{align}
for $|x-y| >\minDist$, where $\minDist>0$, $\Lambda \geq 1$ are some 
constants. There exist constants $\kappa \geq 1 , c \geq 1$
such that for 
every $R>0$, $x_0\in \R^d$ and every function $f:(B_{\kappa R}(x_0) \cap 
\Z^d) \to \R$, the inequality
\begin{align*}
  \sum\limits_{\Squash{\underset{|x-y|>\minDist}{x,y \in B_R(x_0) \cap  \Z^d}}}
    (f(x)-f(y))^2 |x-y|^{-d-\alpha}
  \,\,\leq\,\,
  c \sum\limits_{\Squash{\underset{\DistOf{x}{y}>\minDist}{x, y\in B_{\kappa R}(x_0)\cap  \Z^d}}}
    (f(x)-f(y))^2 \omega(x,y)
\end{align*}
holds.

The constant $c$ depends on $\Lambda, \InfApex, R_0$ and on the dimension $d$. 
It does not depend on $\omega$ and $\Gamma$.
\end{theorem}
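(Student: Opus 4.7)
The trivial fractional form on the left sums over all admissible lattice pairs, whereas the weighted form on the right only enjoys a useful lower bound on $\omega(x,y)$ when $y \in V^\Gamma[x]$ or $x \in V^\Gamma[y]$. The plan is therefore to route every generic pair $(x,y)$ through a short chain $x=z_0,z_1,\dots,z_N=y$ of cone-admissible steps. If one can build chains with $N$ bounded by a constant depending only on $d,\vartheta,R_0$, with all step sizes comparable to $|x-y|$, and contained in $B_{\kappa R}(x_0)$, then Cauchy--Schwarz yields
\[
(f(x)-f(y))^2 |x-y|^{-d-\alpha}
\,\,\leq\,\,
C \sum_{i=0}^{N-1} (f(z_i)-f(z_{i+1}))^2 \omega(z_i,z_{i+1}),
\]
and summing over the source pairs gives the asserted inequality, provided each lattice edge $(z,w)$ is reused in only $O(1)$ chains.

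The chains would be built through a multi-scale block-and-town decomposition. Fix a scale step $\Delta>1$ chosen large in terms of $\sqrt{d}/\sin\vartheta$ and partition $\Z^d$ at every scale $n$ into axis-parallel cubes (blocks) of side length $\Delta^n$; inside each block $Q$ distinguish a concentric subcube whose lattice points form the \emph{town} $T(Q)$. The essential geometric input, governed by the enlargement $r\mapsto r+(2r+\sqrt{d})/\sin\vartheta$, is that once $\Delta$ is large enough, any cone with half-angle at least $\vartheta$ issuing from a point $x$ in a block meets the town of every sufficiently close neighboring block in many lattice points. Hence from any $x$ one can jump cone-admissibly to a town representative in a block at the scale of $|x-y|$, symmetrically from $y$, and then connect the two town representatives through $O(1)$ cone-admissible town-to-town jumps inside a common parent block. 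This furnishes a chain of bounded length whose steps all have size comparable to $|x-y|$ and which stays inside a ball of radius $O(|x-y|)$ about the midpoint of $[x,y]$, so a sufficiently large $\kappa$ confines every chain originating in $B_R(x_0)$ to $B_{\kappa R}(x_0)$.

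The principal obstacle is controlling the edge multiplicity $M(z,w)$, the number of source pairs whose chain traverses a given edge. I would force $M(z,w)=O(1)$ by making the choice of the intermediate town representatives depend only on the block containing the source point and on $\Gamma$ restricted to that block, so that the combinatorial structure of each chain is quantized at the block level: a source pair producing $(z,w)$ must then lie in a pair of blocks determined by $(z,w)$ and at mutual distance $\sim |z-w|$, and a direct volume count bounds the number of such pairs by a constant depending only on $\Lambda,\vartheta,R_0,d$. Combined with the comparability $|x-y|\sim |z_i-z_{i+1}|$, which lets $|x-y|^{-d-\alpha}$ be absorbed into $\Lambda\,\omega(z_i,z_{i+1})$ via the assumed lower bound, this closes the argument. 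The regime $|x-y|\leq R_0$, in which $\omega$ need not dominate $|x-y|^{-d-\alpha}$, causes no trouble because both sides of the target inequality exclude it.
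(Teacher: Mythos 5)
Your high-level strategy matches the paper's: build, for each lattice pair $(x,y)$, a cone-admissible chain of bounded length whose steps are comparable in size to $|x-y|$, with each lattice edge reused $O(1)$ times, and then close via Cauchy--Schwarz plus the lower bound on $\omega$. The block-and-town renormalization, the choice of a large scale parameter $\Delta$ tied to $\sqrt{d}/\sin\vartheta$, and the volume-count for multiplicity are all essentially what the paper does (its \S 5 and \S 6).

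However, there is a genuine gap at the heart of the argument: you assert that ``any cone with half-angle at least $\vartheta$ issuing from a point $x$ in a block meets the town of every sufficiently close neighboring block,'' and on this basis you claim the two town representatives can be connected by $O(1)$ cone-admissible town-to-town jumps. The geometric claim is false as stated: a double cone of fixed aperture $\vartheta<\pi/2$ issuing from $x$ only sees blocks lying near its axis, not every neighboring block. So from a given town representative you can only move in the two antipodal directions dictated by $\Gamma$ at that point, and these directions are chosen adversarially across blocks. The boundedness of the number of hops needed to connect two blocks is therefore not automatic; it is precisely the hard part of the theorem.

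The paper resolves this by first reducing an arbitrary $\vartheta$-bounded configuration to one taking values in a finite family of $L=L(d,\vartheta)$ reference cones (Lemma 2.4 / Corollary 2.6), then assigning to each block a \emph{majority-favored} reference cone and forming a ``favored graph'' on blocks, and finally proving connectivity of this graph by an induction on the number $k\leq L$ of cone types realized in a region (Lemma 5.10). The inductive step hinges on the dichotomy: if the favored cone $V$ at a well-connected block contains no point of the target cone type, then at most $k-1$ types are realized inside $V$ and the induction hypothesis applies there; otherwise one can route ``over the cushion'' (Lemma 5.6) through a point of the right type. Without that finite-type reduction and induction, the $O(1)$-hop connectivity you need is unsupported; a naive attempt to hop through arbitrary blocks fails because the accessible directions at each step are constrained by $\Gamma$. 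This inductive connectivity mechanism is the missing ingredient in your proposal.
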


Let us present the motivation for \autoref{theo:main} and provide some 
applications.

One motivation for our research stems from recent 
contributions to the study of the Boltzmann equation. \cite{Sil15} 
provides an approach to the Boltzmann equation, which makes use of recent 
regularity results for integrodifferential operators. This approach works 
without imposing cut-off conditions on the collision kernel. It turns out that 
this approach leads to integrodifferential operators with kernels $k(x,y)$ 
similar to those that we study here. \cite{ImSi16} develops a regularity 
theory to the  Boltzmann equation based on the approach of \cite{Sil15}. 

The first application concerns function spaces. For a domain 
$\Omega \subset \R^d$, we consider the Hilbert space
\[
  H_k(\Omega)
  \,\,=\,\,
  \big\{\,
    f \in L^2({\Omega})
    \,\,\big|\,\,
    |f|_{H_k(\Omega)} < \infty
  \,\big \}
\]
where the  seminorm $|f|_{H_k(\Omega)}$ is given by 
\[
  |f|_{H_k(\Omega)}
  \,\,=\,\,
  \int\limits_{\squash{\Omega \times \Omega}} (f(y)-f(x))^2 k(x,y) 
  \dxy.
\]
We endow $H_k(\Omega)$ with the norm $\|f\|_{H_k(\Omega)}$,
$\|f\|_{H_k(\Omega)}^2 = 
\|f\|_{L^2(\Omega)}^2 + |f|^2_{H_k(\Omega)}$. The 
space $H^{\frac{\alpha}{2}}(\Omega)$ is defined as the Hilbert space of all $f 
\in L^2(\Omega)$ such that the seminorm 
$|f|_{H^\frac{\alpha}{2}(\Omega)}$ is finite. We denote the norm 
on $H^{\frac{\alpha}{2}}(\Omega)$ by $\|f\|_{H^{\frac{\alpha}{2}}(\Omega)}$. 
Note that   
$|\cdot|_{H^{\frac{\alpha}{2}}(\Omega)}$ dominates $|\cdot|_{H_k(\Omega)}$ 
because of
  \eqref{assum:main}. Hence
  $\|\cdot\|_{H^{\frac{\alpha}{2}}(\Omega)}$ dominates 
$\|\cdot\|_{H_k(\Omega)}$ and we can deduce the
  following inclusion:
  \begin{equation}\label{eq:one-direction}
    H^{\frac{\alpha}{2}}(\Omega) \subset H_k(\Omega) \,.
  \end{equation}

As we will show, \autoref{theo:main_discrete} implies the reverse implication 
if $\Omega$ is a bounded Lipschitz domain. 
We will prove the following result in \autoref{sec:application_discrete}. 

\begin{theorem}\label{cor:H-are-equal} 
Let $\Omega \subset \R^d$ be a 
bounded Lipschitz domain. Then $H_k(\Omega) = H^{\frac{\alpha}{2}}(\Omega)$. 
The seminorms 
$|\cdot|_{H_k(\Omega)}$ and 
$|\cdot|_{H^{\frac{\alpha}{2}}(\Omega)}$ and the corresponding norms are 
comparable 
on $H_k(\Omega)$.
Moreover, the subspace $C^\infty(\overline{\Omega})$ is dense in 
$H_k(\Omega)$.

In addition $H_k(\R^d)=H^{\frac{\alpha}{2}}(\R^d)$ and the 
seminorms 
$|\cdot|_{H_k(\R^d)}$ and 
$|\cdot|_{H^{\frac{\alpha}{2}}(\R^d)}$ and the corresponding norms are 
comparable on $H_k(\R^d)$. The 
subspace 
$C_c^\infty(\R^d)$ of smooth functions with compact support in $\R^d$ is dense 
in $H_k(\R^d)$.
\end{theorem}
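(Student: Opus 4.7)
The easy inclusion $H^{\alpha/2}(\Omega)\subset H_k(\Omega)$ and the seminorm bound $|\cdot|_{H_k(\Omega)}\leq\Lambda|\cdot|_{H^{\alpha/2}(\Omega)}$ are already recorded via \eqref{eq:one-direction} and the upper estimate in \eqref{assum:main}. The substance of \autoref{cor:H-are-equal} is therefore the reverse seminorm inequality $|f|_{H^{\alpha/2}(\Omega)}\leq C\|f\|_{H_k(\Omega)}$; once this is in hand, both density statements will follow by transporting the classical density of $C_c^\infty(\RRR^d)$ in $H^{\alpha/2}(\RRR^d)$, respectively $C^\infty(\overline{\Omega})$ in $H^{\alpha/2}(\Omega)$, along the resulting norm equivalence.

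For $\Omega=\RRR^d$ the plan is short: apply \autoref{theo:main} to the ball $B=B_n(0)$ and let $n\to\infty$. Since the constant $c$ is independent of the ball, the integrands are nonnegative, and $B_n\times B_n\nearrow\RRR^d\times\RRR^d$, monotone convergence on both sides yields
\[
\int_{\RRR^d\times\RRR^d}(f(x)-f(y))^2|x-y|^{-d-\alpha}\dxy \;\leq\; c\int_{\RRR^d\times\RRR^d}(f(x)-f(y))^2 k(x,y)\dxy,
\]
which combined with the trivial reverse inequality delivers comparability of both seminorms and norms on $\RRR^d$.

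For a bounded Lipschitz $\Omega$ the real work begins, and the paper signals that \autoref{theo:main_discrete} is the tool of choice. My plan is to discretize: at mesh $h>0$, attach to $f\in H_k(\Omega)$ the cube-averaged lattice function $f_h(z):=h^{-d}\int_{hz+[0,h)^d}f(x)\dx$ on $h\ZZZ^d$, extended by zero off $\Omega$, and rescale so that $h\ZZZ^d$ becomes $\ZZZ^d$. Choose $R$ so that the rescaled $\Omega$ lies inside a ball $B_R(x_0)$, apply \autoref{theo:main_discrete} to $f_h$, and rescale back. Letting $h\downarrow 0$, Riemann-sum convergence for $L^2$ data (after mollifying $f$ if necessary) turns the discrete inequality into a continuous one that bounds $\int_{\Omega\times\Omega}(f(x)-f(y))^2|x-y|^{-d-\alpha}\dxy$ by an integral of $(\widetilde{f}(x)-\widetilde{f}(y))^2 k(x,y)$ over $B_{\kappa R}\times B_{\kappa R}$, where $\widetilde{f}$ is the zero extension of $f$. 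The decisive last step is to absorb the outer contribution $\int_\Omega f(x)^2\int_{B_{\kappa R}\setminus\Omega}k(x,y)\,\d y\,\dx$ into $|f|_{H_k(\Omega)}^2+\|f\|_{L^2(\Omega)}^2$; this is where the Lipschitz (uniform interior cone) structure of $\partial\Omega$ enters, allowing each outside pairing $(x,y)$ with $y\in B_{\kappa R}\setminus\Omega$ to be rerouted through a comparable interior pairing already counted in $|f|_{H_k(\Omega)}^2$. The main obstacle I anticipate is precisely this last absorption step: a naive extension by zero would generate an unbounded Hardy-type term $\int_\Omega f(x)^2\,\dist(x,\partial\Omega)^{-\alpha}\dx$ not controlled by $\|f\|_{L^2(\Omega)}^2$ alone (particularly as $\alpha\uparrow 2$), so the geometry of the Lipschitz boundary must genuinely be brought in to sidestep it.
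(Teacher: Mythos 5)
Your argument for $\Omega=\R^d$ matches the paper's: apply the ball comparability (from \autoref{theo:main} or, equivalently, \autoref{lem:H-are-equal-on-balls}) and let the radius tend to infinity via monotone convergence; the density statement then follows from the classical density of $C_c^\infty(\R^d)$ in $H^{\alpha/2}(\R^d)$. That half is correct.

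For bounded Lipschitz $\Omega$ your proposed route has a genuine gap, and you have in fact identified it yourself: extending $f$ by zero to a ball $B_{\kappa R}\supset\Omega$ and applying \autoref{theo:main_discrete} to the extension $\widetilde{f}$ produces the cross term
\[
\int_{\Omega}\int_{B_{\kappa R}\setminus\Omega} f(x)^2\,k(x,y)\,\d y\,\d x
\;\asymp\;
\int_\Omega f(x)^2\,\dist(x,\partial\Omega)^{-\alpha}\,\d x ,
\]
and there is no hope of absorbing this into $\|f\|_{H_k(\Omega)}^2$. Constants lie in $H_k(\Omega)$ with vanishing seminorm and bounded $L^2$-norm, yet the Hardy-type integral is infinite for them when $\alpha\geq 1$, and still behaves like a $\dist^{-\alpha}$-weighted $L^2$-norm for $\alpha<1$ that cannot be controlled by $\|f\|_{L^2(\Omega)}^2+|f|_{H_k(\Omega)}^2$. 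The idea of ``rerouting each outside pairing through a comparable interior pairing'' would need to be made quantitative and does not obviously circumvent this: any boundary pairing scheme one devises for the zero extension effectively amounts to a fractional Hardy inequality on the domain, which fails in the relevant generality. The paper avoids this obstacle entirely by \emph{not} extending $f$ across $\partial\Omega$. Instead it uses \autoref{lem:new6.9}, a Whitney decomposition of $\Omega$ into balls $B$ with $B^\ast\subset\Omega$ and with the finite overlap property; the ball inequality of \autoref{lem:H-are-equal-on-balls} is applied to each $B$ (so all integrals stay inside $\Omega$ and the boundary never enters), and a result of Dyda and Prats--Saksman (inequality (13) in \cite{Dyda06}, see also \cite{PrSaks17}) then controls the full double integral $\int_{\Omega\times\Omega}(f(x)-f(y))^2|x-y|^{-d-\alpha}$ by the sum of Whitney-ball integrals. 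This is the step your proposal is missing, and it is where the Lipschitz regularity of $\partial\Omega$ is actually used: it guarantees that the Whitney-ball estimate of Dyda applies.
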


As mentioned above, \autoref{theo:main} has direct significance for the 
theory of Markov jump processes. 
Let us recall that a bilinear symmetric closed form $(\mathcal{E}, 
\mathcal{D}(\mathcal{E}))$ on $L^2(\R^d)$ is called Dirichlet form if it is 
Markovian, e.g., if for every $u \in  \mathcal{D}(\mathcal{E})$ the function 
$v= (u \wedge 1) \vee 0$ belongs to $\mathcal{D}(\mathcal{E})$ and satisfies 
$\mathcal{E}(v,v) \leq \mathcal{E}(u,u)$. See \cite[Section 1.1]{FOT94} for 
this definition plus comments and examples. 
A Dirichlet form $(\mathcal{E}, \mathcal{D}(\mathcal{E}))$ on $L^2(\R^d)$ is 
called regular if $C_c(\R^d) \cap \mathcal{D}(\mathcal{E})$ is dense in 
$C_c(\R^d)$ w.r.t. the supremum norm as well as in $\mathcal{D}(\mathcal{E})$ 
w.r.t. the norm $(\mathcal{E}(u,u) + (u,u))^\frac{1}{2}$. A major result is 
that every regular Dirichlet form $(\mathcal{E}, 
\mathcal{D}(\mathcal{E}))$ on $L^2(\R^d)$ corresponds to a symmetric 
strong Markov process on $(\R^d, \mathcal{B}(\R^d))$, whose Dirichlet form is 
given by $(\mathcal{E}, \mathcal{D}(\mathcal{E}))$, cf. \cite[Theorem 
7.2.1]{FOT94}. Note that the 
rotationally symmetric $\alpha$-stable process is the strong Markov process 
that corresponds to the regular Dirichlet form $(\mathcal{E}^\alpha, 
H^{\alpha/2}(\R^d))$ on $L^2(\R^d)$, where 
\begin{align*}
  \mathcal{E}^\alpha(f,g)
  \,\,=\,\,
  \int\limits_{\Squash{\R^d \times \R^d}}
  \left(f(y)-f(x) \right) \left(g(y)-g(x) \right) \; |x-y|^{-d-\alpha} \dxy 
  \,.
\end{align*} 
\autoref{theo:main} immediately implies the following result.

\begin{corollary}
The Dirichlet form $(\mathcal{E}, \mathcal{F})$ on 
$L^2(\R^d)$ with $\mathcal{F} = H^{\alpha/2}(\R^d)$ and 
\begin{align*}
  \mathcal{E} (f,g)
  \,\,=\,\,
  \int\limits_{\Squash{\R^d \times \R^d}} \left(f(y)-f(x) \right) \left(g(y)-g(x) \right)
    k(x,y) \dxy \,,
\end{align*}
is a regular Dirichlet form on $L^2(\R^d)$. There exists a corresponding 
strong Markov process.
\end{corollary}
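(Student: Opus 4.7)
The plan is to verify the defining properties of a regular Dirichlet form---symmetry, density, closedness, the Markovian condition, and regularity---and then invoke \cite[Theorem 7.2.1]{FOT94} to obtain the associated strong Markov process. The heavy lifting has been done by \autoref{theo:main} and its consequence \autoref{cor:H-are-equal}; what remains is essentially bookkeeping.

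Symmetry of $\mathcal{E}$ follows from $k(x,y)=k(y,x)$, and the density of $\mathcal{F} = H^{\alpha/2}(\R^d)$ in $L^2(\R^d)$ is clear from the inclusion $C_c^\infty(\R^d) \subset \mathcal{F}$. For closedness, I would note that $\mathcal{E}_1(f,f) := \mathcal{E}(f,f) + \|f\|_{L^2(\R^d)}^2$ coincides with $\|f\|_{H_k(\R^d)}^2$, which by the $\R^d$-part of \autoref{cor:H-are-equal} is equivalent to $\|f\|^2_{H^{\alpha/2}(\R^d)}$. Since $H^{\alpha/2}(\R^d)$ is complete under its own norm, $(\mathcal{F}, \mathcal{E}_1^{1/2})$ is then a Hilbert space, which gives closedness.

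The Markovian condition is handled directly: for $f \in \mathcal{F}$ and $v = (f \wedge 1) \vee 0$, the pointwise estimate $|v(y)-v(x)| \leq |f(y)-f(x)|$ together with $k \geq 0$ yields $\mathcal{E}(v,v) \leq \mathcal{E}(f,f)$, while $\|v\|_{L^2} \leq \|f\|_{L^2}$ places $v$ in $\mathcal{F}$. Regularity requires that $C_c(\R^d) \cap \mathcal{F}$ be dense in $C_c(\R^d)$ in the supremum norm and in $\mathcal{F}$ in the $\mathcal{E}_1$-norm. The first density is immediate from $C_c^\infty(\R^d) \subset \mathcal{F}$ and the standard density of $C_c^\infty(\R^d)$ in $C_c(\R^d)$; the second is exactly the final density statement of \autoref{cor:H-are-equal}, since the $\mathcal{E}_1$-norm equals $\|\cdot\|_{H_k(\R^d)}$. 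Applying \cite[Theorem 7.2.1]{FOT94} then produces the associated strong Markov process.

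No single step here is a genuine obstacle; the real difficulty is absorbed into the proofs of \autoref{theo:main} and \autoref{cor:H-are-equal}. The point worth highlighting is that the $\R^d$-assertions in \autoref{cor:H-are-equal} (not merely the bounded Lipschitz-domain versions) are precisely what supply both the norm equivalence needed for closedness and the $\mathcal{E}_1$-dense smooth subspace needed for regularity; without the global statement, one would only obtain a regular Dirichlet form on bounded domains.
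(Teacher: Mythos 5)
Your proof is correct and is exactly the verification the paper leaves implicit when it says that \autoref{theo:main} ``immediately implies'' the corollary: symmetry and the Markov contraction are routine, and both closedness and the $\mathcal{E}_1$-density of $C_c^\infty(\R^d)$ reduce, as you observe, to the global ($\R^d$) norm equivalence and density assertions of \autoref{cor:H-are-equal}, which in turn come from letting $R\to\infty$ in \autoref{theo:main}. Your closing remark --- that the ball-wise statement alone would not suffice and one genuinely needs the $\R^d$ version --- is the right thing to emphasize and matches the paper's logic.
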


The corresponding stochastic process is an interesting subject for further 
research. Presumably, it shares several properties with the related 
rotationally symmetric $\alpha$-stable process. Establishing sharp pointwise 
heat kernel estimates and, if applicable, the Feller property 
constitute interesting but challenging tasks.

Another application concerns regularity of solutions to integrodifferential 
equations. We can apply recent results of \cite{DyKa15} and establish a weak 
Harnack inequality and H\"{o}lder a priori estimates to corresponding weak 
solutions. 

\begin{corollary}\label{cor:regularity}
Assume $\alpha \in (0,2)$, $k(x,y)$ is as in \autoref{theo:main},  
$\Omega \subset \R^d$ is open and $f \in L^{q/\alpha}(\Omega)$ for $q > d$. 
Then every weak solution $u : \R^d \to \R$ to 
\[
  \lim\limits_{\eps \to 0+} \int\limits_{\squash{\R^d \setminus B_\eps(x)}}
  \big( u(y) - u(x) \big) k(x,y) \d y
  \,\,=\,\,
  f \qquad (x \in \Omega) \,,
\] 
satisfies a weak Harnack inequality and is H\"older regular in the 
interior of $\Omega$. 
\end{corollary}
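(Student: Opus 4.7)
The plan is to reduce the corollary to a direct application of the regularity machinery of \cite{DyKa15}. Those results, as the introduction already hints via the reference to \cite[Condition (A)]{DyKa15}, require two structural hypotheses on the kernel driving the integrodifferential operator: a pointwise upper bound of the form $k(x,y) \leq \Lambda |x-y|^{-d-\alpha}$, and, crucially, a comparability on every ball between the quadratic form associated with $k$ and the quadratic form of the fractional Laplacian of order $\alpha/2$. So the whole corollary will follow once these two inputs are checked, and this is exactly where \autoref{theo:main} does the work.

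First, I would verify the pointwise upper bound: it is part of \eqref{assum:main}, together with the assumed symmetry $k(x,y)=k(y,x)$. The trivial direction of comparability, namely
\[
  \int_{B \times B} (f(x)-f(y))^2 k(x,y) \dxy
  \,\,\leq\,\,
  \Lambda \int_{B \times B} (f(x)-f(y))^2 |x-y|^{-d-\alpha} \dxy,
\]
is also an immediate consequence of \eqref{assum:main}. The nontrivial reverse comparability, with a constant independent of the ball $B$ and depending only on $\Lambda$, $d$, $\InfApex$ (and, in the small-$\alpha$ regime, on a lower bound $\alpha_0$), is exactly \eqref{eq:main-result} in \autoref{theo:main}.

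Second, with these two inputs in hand, the hypothesis framework of \cite{DyKa15} is satisfied, and one invokes their weak Harnack inequality and Hölder regularity estimate for weak solutions of the equation $L u = f$ with $f \in L^{q/\alpha}(\Omega)$, $q>d$, applied on arbitrary balls compactly contained in $\Omega$. This yields both the weak Harnack inequality and the interior Hölder regularity of $u$ in $\Omega$ asserted in the corollary.

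The main obstacle I expect is a bookkeeping one rather than a mathematical one: one must make sure that the notion of weak solution used here (formulated via the principal-value integral in the displayed equation) agrees with the one in \cite{DyKa15}, and that Condition (A) there is formulated as ball-wise comparability rather than a global one, so that \autoref{theo:main} really supplies it in the required uniform form. Granting that matching of definitions, the rest is direct quotation.
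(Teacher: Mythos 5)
Your approach is the same as the paper's: Corollary \ref{cor:regularity} is proved by direct appeal to the regularity results of \cite{DyKa15}, with \autoref{theo:main} supplying exactly the ball-wise comparability needed for their Condition (A), and the pointwise upper bound coming straight from \eqref{assum:main}. That part of your argument is correct.

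There is one small but genuine gap: the hypotheses of \cite{DyKa15} are not exhausted by Condition (A). The paper's proof also notes that \emph{Condition (B)} of \cite{DyKa15} must be verified, and does so by pointing out that it holds for the classical choice of Lipschitz continuous cutoff functions. Your proposal never mentions Condition (B), so as written you have not checked all the hypotheses required to invoke the weak Harnack inequality and H\"older estimate. The fix is short -- Condition (B) is a statement about the behavior of the quadratic form on products of test functions with Lipschitz cutoffs, and the upper bound $k(x,y)\leq \Lambda|x-y|^{-d-\alpha}$ in \eqref{assum:main} makes it routine to verify -- but it does need to be stated, since it is an independent structural requirement of \cite{DyKa15} and is not implied by \autoref{theo:main}.
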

 
The proof uses the regularity result of \cite{DyKa15}. It relies on 
\autoref{theo:main}, which ensures that condition (A) of \cite{DyKa15} is 
satisfied. Condition (B) is easily verified for the classical choice of 
Lipschitz continuous cutoff-functions.  

This article is organized as follows. In \autoref{sec:setup} we provide the 
technical definitions and explain the set-up in detail. 
In \autoref{sec:application_discrete} we explain how \autoref{theo:main} is 
derived from \autoref{theo:main_discrete}. To this end, we formulate a rescaled 
version of \autoref{theo:main_discrete} on $h \Z^d$ for $h > 0$, 
\autoref{cor:main_discrete}, and consider the limit procedure $h \searrow 0$. 
We also provide the proof of \autoref{cor:H-are-equal}. In 
\autoref{sec:proof_discrete} we provide the main tool for the proof of 
\autoref{theo:main_discrete}, which is \autoref{theo:path-props}. Since the 
main ideas can be better communicated when working in the Euclidean 
space rather than the integral lattice, we present this case separately
in \autoref{sec:prelude}. 
\autoref{sec:proof_main-result} finally contains the proof of 
\autoref{theo:main_discrete}.

\section{Set-up, definitions and preliminaries}\label{sec:setup}

The aim of this section to provide the framework of \autoref{theo:main} and 
auxiliary results needed to deduce \autoref{theo:main} from 
\autoref{theo:main_discrete}.

\begin{definition}\label{def:start}
Given $v \in \S^{d-1}$ and $\vartheta \in (0,\frac{\pi}{2}]$ we define a 
\emph{cone} by 
\begin{align*}
  \widetilde{V}
  \,\,=\,\,
  \widetilde{V}(v, \vartheta)
  \,\,=\,\,
  \Big\{\,
    h \in \R^d
    \ \Big|\
    h \ne 0,\, \frac{\langle v,h \rangle}{|h|} > \cos(\vartheta)
  \,\Big\}\,. 
\end{align*}
Let $\widetilde{\mathcal{V}}$ denote the family of all cones.
We denote the corresponding \emph{double cone} by $V$, i.e.
\begin{align*}
  V
  \,\,=\,\,
  V(v, \vartheta)
  \,\,=\,\,
  \widetilde{V} \cup (- \widetilde{V} ) \,.
\end{align*}
The set $\mathcal{V}$ of all double cones is simply the manifold $ 
(0,\frac{\pi}{2}] \times 
\mathbb{P}_\R^{d-1}$, where $\mathbb{P}_\R^{d-1}$ is the real projective space 
of dimension $d-1$. For $x 
\in \R^d$ we define a \emph{shifted cone} by $\widetilde{V}[x] = 
\widetilde{V} + 
x$ and a \emph{shifted double cone} by $V[x] = V + x$. \newline
For $r>0$ we 
call the set $V_r = V_r(v,\vartheta) = \{ y \in V \, | \,  \overline{B_r(y)} 
\subset V\}$ 
a \emph{double half-cone}. A mapping $\Gamma: \R^d \to \mathcal{V}$ is 
called \emph{configuration}. If $\Gamma$ is a configuration with the property 
that the infimum $\vartheta$ over all apex angles of cones in $\Gamma(\R^d)$ is 
positive, then $\Gamma$ is called \emph{$\InfApex$-bounded}. If $\Gamma$ is a 
$\InfApex$-bounded configuration and 
\begin{align}\label{assum: measurability}\tag{M} 
  \big\{\,
    (x,y) \in \R^d \times \R^d 
    \,\,\big|\,\,
    y-x \in \Gamma(x)
  \,\big\} 
\text{ is a Borel set in }\R^d \times \R^d,
\end{align}
then $\Gamma$ is called 
\emph{$\vartheta$-admissible}, cf. \autoref{comment on mbc}. For $x 
\in \R^d$ and $\Gamma$ a configuration, we define $V^\Gamma [x] = 
x + \Gamma(x)$ and
analogously for $r>0$
\[
  V_r^\Gamma[x] \,\,=\,\,
  \big\{\,
    y\in V^\Gamma[x]
    \,\,\big|\,\, \overline{B_r(y)} \subset V^\Gamma[x]
  \,\big\} \,.
\]
\end{definition}

\begin{figure}
\centering
  \ProvidesFile{fig--cone-and-double-half-cone.tex}
\begingroup
  \begin{tikzpicture}[scale=1.5]
      \coordinate (O) at (0,0) {};
      \coordinate (B) at (1,2) {};
      \coordinate (C) at (-1,2) {};
      \coordinate (D) at (1,-2) {};
      \coordinate (E) at (-1,-2) {};
      \coordinate (F) at (0,-1.5);
      \coordinate [label = above:$v$](G) at (0,1.5) {};
      \draw (E) -- (B);
      \draw (C) -- (D);
      \draw[dashed] (F) -- (G);
      \pic[draw=black, angle eccentricity=0.65, angle radius = 1.5cm, pic text={$\vartheta$}]
    {angle=B--O--G}; 
  \end{tikzpicture}
  \hspace{2cm}
  \begin{tikzpicture}[scale=1.5]
      \coordinate (O) at (0,0) {};
      \coordinate (B) at (1,2) {};
      \coordinate (C) at (-1,2) {};
      \coordinate (D) at (1,-2) {};
      \coordinate (E) at (-1,-2) {};
      \coordinate (F) at (0,-1.5);
      \coordinate (G) at (0,1.5) {};
      \coordinate (H) at (0,1);
      \coordinate (I) at (0.5,2);
      \coordinate (J) at (-0.5,2);
      \coordinate (H') at (0,-1);
      \coordinate (I') at (0.5,-2);
      \coordinate (J') at (-0.5,-2);
      \coordinate (r1) at (0.5,1);
      \coordinate (r2) at (0.1,1.2);
      \draw [dashed] (E) -- (B);
      \draw [dashed] (C) -- (D); 
      \draw[blue] (I) -- (H) -- (J);
      \draw[blue] (I') -- (H') -- (J');
      \draw[thick, dashed, red] (r1) -- (r2) node [midway, above, sloped, red] (TextNode) {$r$};
  \end{tikzpicture}
\endgroup
 
\caption{Example of a cone $V(v,\vartheta)$ and a double half-cone 
$V_r(v,\vartheta)$ for $d=2$}
\end{figure}
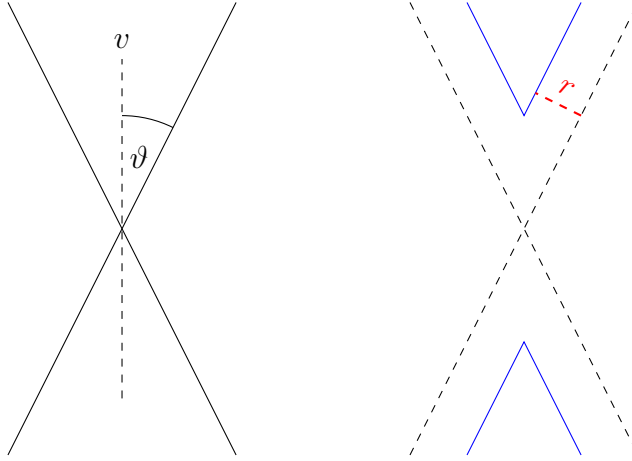

\definecolor{cqcqcq}{rgb}{0.75,0.75,0.75}
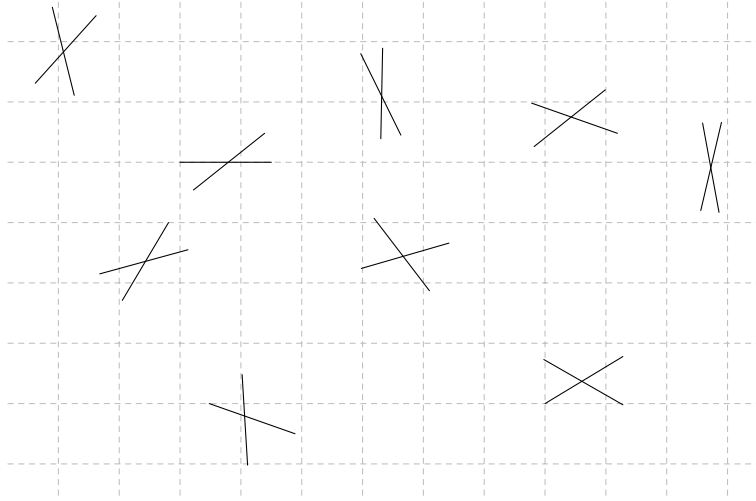
\begin{figure}
\begin{center}
  \ProvidesFile{fig--configuration.tex}
\begingroup
  \begin{tikzpicture}[scale=0.8, line cap=round,line join=round,>=triangle 45,x=1.0cm,y=1.0cm]
    \draw [color=cqcqcq,dash pattern=on 2pt off 2pt, xstep=1.0cm,ystep=1.0cm] (-4.83,-5.52) grid (7.64,2.69);
    \clip(-4.83,-5.52) rectangle (7.64,2.69);
    \draw (-2,0)-- (-0.5,0);
    \draw (-1.78,-0.46)-- (-0.61,0.48);
    \draw (-2.19,-1)-- (-2.95,-2.29);
    \draw (0.98,-1.76)-- (2.42,-1.34);
    \draw (1.19,-0.93)-- (2.1,-2.13);
    \draw (-3.32,-1.85)-- (-1.87,-1.45);
    \draw (3.78,0.98)-- (5.19,0.48);
    \draw (3.82,0.26)-- (4.99,1.2);
    \draw (1.63,0.45)-- (0.97,1.8);
    \draw (1.3,0.39)-- (1.33,1.89);
    \draw (4,-4)-- (5.28,-3.22);
    \draw (5.28,-4.02)-- (3.98,-3.27);
    \draw (6.56,-0.8)-- (6.9,0.66);
    \draw (6.86,-0.83)-- (6.59,0.65);
    \draw (-0.89,-5.02)-- (-0.98,-3.52);
    \draw (-1.52,-4)-- (-0.11,-4.5);
    \draw (-4.38,1.31)-- (-3.38,2.43);
    \draw (-3.74,1.11)-- (-4.1,2.57);
  \end{tikzpicture}
\endgroup
 
\end{center}
\captionof{figure}{A possible configuration $\Gamma$}
\label{fig:configuration}
\end{figure}

One key observation of our approach is that the large, possibly uncountable, 
family of cones generated by a $\vartheta$-admissible configuration $\Gamma$ 
can 
be reduced to a 
finite family of 
cones.

\begin{lemma}\label{lem:ref-cones}
Let $\Gamma$ be a $\InfApex$-bounded configuration. There are numbers $L 
\in 
\N$ and $\theta \in  (0,\frac{\pi}{2}]$, and double cones 
$V^1,...,V^L$ centered at $0$ with apex angle 
$\theta$ and symmetry axis $v^1, 
\ldots, v^L \in \S^{d-1}$ such that 
\[
  \forall x\in \R^d \,\,\, \exists m\in \{1, \ldots, L\} \qcolon V^m 
  \subset \Gamma(x) \,.
\]
The constants $L$ and $\theta$ depend on the dimension $d$ and $\InfApex$ but 
not on $\Gamma$ itself.
\end{lemma}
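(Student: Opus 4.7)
The plan is to produce the reference cones by a compactness argument on the projective space $\mathbb{P}_\R^{d-1}$ of axes, combined with elementary spherical trigonometry ensuring that a narrow cone placed along an approximate axis fits inside a wider cone along the true axis.

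First I would fix the apex angle of the reference cones: set $\theta := \InfApex/2 \in (0,\pi/2]$. The plan is to choose the reference axes $v^1,\ldots,v^L \in \S^{d-1}$ so that for every $v \in \S^{d-1}$ there exists $m$ with projective angular distance $\angle(v^m, \pm v) \leq \InfApex - \theta = \InfApex/2$. Since $\mathbb{P}_\R^{d-1}$ is a compact metric space under the induced projective angle, such a finite $(\InfApex/2)$-net exists, and the minimal cardinality $L$ depends only on $d$ and $\InfApex$.

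The key geometric step is to verify that this choice forces the containment $V(v^m, \theta) \subset V(v, \InfApex)$. Given a unit vector $h$ with $\angle(h, \pm v^m) < \theta$, the triangle inequality on the sphere yields
\[
  \angle(h, \pm v) \,\leq\, \angle(h, \pm v^m) + \angle(\pm v^m, \pm v) \,<\, \theta + (\InfApex - \theta) \,=\, \InfApex,
\]
so $h \in V(v, \InfApex)$. Taking $V^m := V(v^m, \theta)$, this establishes $V^m \subset V(v, \InfApex)$ for any axis $v$ within projective distance $\InfApex/2$ of $v^m$.

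Finally, for an arbitrary $x \in \R^d$, the assumption that $\Gamma$ is $\InfApex$-bounded implies that $\Gamma(x) = V(v, \vartheta')$ for some $v \in \S^{d-1}$ and some apex angle $\vartheta' \geq \InfApex$. Choose $m$ from the net with $\angle(v^m, \pm v) \leq \InfApex/2$; then
\[
  V^m \,\subset\, V(v, \InfApex) \,\subset\, V(v, \vartheta') \,=\, \Gamma(x),
\]
proving the claim. The main (mild) obstacle is merely bookkeeping: the axes live in the projective space, not the sphere, so the $\eps$-net must be chosen with respect to the projective angle, but this is exactly what makes the sign ambiguity of double cones harmless. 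No measurability issue arises because the lemma uses only the $\InfApex$-boundedness of $\Gamma$, not \eqref{assum: measurability}.
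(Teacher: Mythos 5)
Your proposal is correct and follows essentially the same route as the paper: the paper also covers the unit sphere by finitely many cones $V(v^m,\vartheta/3)$ using compactness and then concludes by the same angle-addition argument (which it leaves implicit), so your choice of $\theta=\InfApex/2$ with a projective $\eps$-net is just a cosmetic variant. The explicit handling of the sign ambiguity via the projective metric and the spelled-out triangle inequality are fine and, if anything, more detailed than the published proof.
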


\begin{proof} Obviously 
\[
  \S^{d-1}
  \,\,\subset\,\,
  \bigcup_{v \in \S^{d-1}} V\left(v,\frac{\vartheta}{3}\right)
\]
where $\S^{d-1}$ is the unit sphere in $\TheSpace$.
Since $\S^{d-1}$ is compact and the right hand side is an open 
cover of 
$\S^{d-1}$, one can choose finite many $v^1,...,v^L \in \S^{d-1}$ such that
\[
  \S^{d-1}
  \,\,\subset\,\,
  \bigcup_{m=1}^L V\left(v^m,\frac{\vartheta}{3}\right) .
\]
Define $V^m = V\left(v^m,\frac{\vartheta}{3}\right)$ for $m=1,...,L$. Now the 
claim follows with $\theta= \vartheta/3$. 
\end{proof}
\begin{definition}\label{def:ref-cones}
In the sequel, we write $V^m[x]$ instead of $V^m+x$. We call the set $\{ 
V^m 
| 1\leq i\leq L\}$ the \emph{family of reference cones} associated to $\Gamma$. 
Each element is called a \emph{reference cone}.
Analogous to \autoref{def:start} we set 
\[
  V_r^m \,=\,
    \big\{\, u \in V^m \,\,\big|\,\, \overline{B_r} \subset V^m \,\big\}
  , \,\,\,\,\,
  V_r^m[x]\,=\,V^m_r + x \,.
\]
\end{definition}

With help of \autoref{lem:ref-cones} we can define a new configuration 
that has useful properties. The following corollary is the key tool for 
our reasoning in \autoref{sec:prelude} and \autoref{sec:proof_discrete}. 

\begin{corollary}\label{cor:ref-config}
 Let $\Gamma$ be a $\vartheta$-bounded configuration. Then there exists 
another configuration $\widetilde{\Gamma}$ that fulfills 
$\#\widetilde{\Gamma}(\R^d)<\infty$ and for every $x \in \R^d$
\[ \widetilde{\Gamma}(x) \,\subset\, \Gamma(x). \]
The minimum of apex angles of cones in $\widetilde{\Gamma}(\R^d)$ is 
$\vartheta$. 
\end{corollary}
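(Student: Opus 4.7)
The plan is to build $\widetilde{\Gamma}$ by selecting, at each point $x$, one of the finitely many reference cones produced by \autoref{lem:ref-cones}. Concretely, I would first apply \autoref{lem:ref-cones} to the given $\vartheta$-bounded configuration $\Gamma$. This yields $L \in \N$, an angle $\theta = \vartheta/3$, and a finite family of double cones $V^1,\dots,V^L$ with common apex angle $\theta$ such that for every $x \in \R^d$ at least one $V^m$ is contained in $\Gamma(x)$.

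Next I would fix a selection rule to single out one reference cone at each $x$. Since $\{1,\dots,L\}$ is finite and well-ordered, I can set
\[
  m(x) \,\,:=\,\, \min\big\{\, m \in \{1,\dots,L\} \,\,\big|\,\, V^m \subset \Gamma(x) \,\big\}\,,
\]
which is well-defined by the conclusion of \autoref{lem:ref-cones}, and then declare
\[
  \widetilde{\Gamma}(x) \,\,:=\,\, V^{m(x)}\,.
\]
No axiom of choice is needed because the index set is finite; the map $\widetilde{\Gamma}\mapcolon \R^d \to \mathcal{V}$ is thereby unambiguously defined.

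It then remains to check the three properties. The inclusion $\widetilde{\Gamma}(x) = V^{m(x)} \subset \Gamma(x)$ holds by the very definition of $m(x)$. The image satisfies $\widetilde{\Gamma}(\R^d) \subset \{V^1,\dots,V^L\}$, hence $\#\widetilde{\Gamma}(\R^d) \leq L < \infty$. Finally every cone in $\widetilde{\Gamma}(\R^d)$ has apex angle $\theta$, so the infimum of apex angles is attained and positive (matching the $\vartheta$ stated in the corollary, understood as the angle $\theta$ provided by the reference-cone lemma).

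There is no real obstacle here: the entire content of the corollary is already packaged into \autoref{lem:ref-cones}, and the only new ingredient is the deterministic choice of one reference cone per point, achieved by the $\min$-over-indices trick. The brief care needed is just to note that the selection is well-defined (nonemptiness of the selection set is guaranteed by the lemma) and that the resulting configuration is a genuine map into $\mathcal{V}$, both of which are immediate.
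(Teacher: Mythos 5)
Your proof is correct and takes essentially the same approach as the paper: the paper partitions $\R^d$ into sets $M_1,\dots,M_L$ by assigning each $x$ to the first index $i$ with $V^i\subset\Gamma(x)$, which is exactly your $\min$-over-indices selection rule written out as a disjoint partition. (Your phrasing with $m(x)=\min\{m : V^m\subset\Gamma(x)\}$ is in fact cleaner than the paper's, which has a small slip in writing $\setminus M_{i-1}$ instead of $\setminus(M_1\cup\dots\cup M_{i-1})$.)
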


\begin{proof}
 Let $V^1,...,V^L$ be the double cones from the preceding lemma. Define sets
 \begin{align*}
  M_1&\,\,=\,\,\{\, x \in \R^d \,\,|\,\, V^1 \subset \Gamma(x) \,\}\\
  M_2&\,\,=\,\,\{\, x \in \R^d \,\,|\,\, V^2 \subset \Gamma(x) \,\} \setminus M_1 \\
    & \,\,\,\,\,\vdots\\
  M_L&\,\,=\,\,\{\, x \in \R^d \,\,|\,\, V^L \subset \Gamma(x) \,\} \setminus M_{L-1}.
 \end{align*}
Now 
\[ \R^d \,\,=\,\, \bigcup_{\squash{1\leq i\leq L}} M_i\]
and this union is disjunct. Define $\widetilde{\Gamma}:\R^d \to \mathcal{V}, x 
\mapsto V^i$ for $x \in M_i$ and arrive at the assertion.
\end{proof}

\begin{definition}\label{def:cubes}
For $h >0$ and $u=(u_1,...,u_d) \in \R^d$ let
\begin{align*}
  A_h(u)
  \,\,=\,\,
  \big\{\,
    x \in \R^d
    \,\,\big|\,\,
    \|x-u\|_\infty < h/2
  \,\big\}
\end{align*}
the open cube with center $u$. The half-closed cube with center $u$ will be 
denoted by
\[
  \widetilde{A}_h(u)
  \,\,=\,\,
  \prod_{i=1}^d\Big[u_i-\frac{h}{2},u_i+\frac{h}{2}\Big).
\]
\end{definition}

\begin{remark}
 Half-closed cubes are only needed in one proof in 
\autoref{sec:application_discrete}.
\end{remark}

Let $\Gamma$ be a $\vartheta$-admissible configuration and $\{V^1,...,V^L\}$  
a family of reference cones according to \autoref{lem:ref-cones}. Then
 our assumption \eqref{assum: measurability} implies for any $V \in 
\mathcal{V}$ that the set
\[
  \{\, x \in \R^d \,\, | \,\, V \subset \Gamma(x) \,\}
\]
is Lebesgue measurable.
This implication is due to \cite[Thm. 4.4]{Debreu67}.

Given $h>0, u\in \R^d$ and $m\in \{1,...,L\}$, we set
\[ 
  A_h^m(u)
  \,\,=\,\,
  \{\,
    x\in A_h(u)
    \,\,|\,\,
    V^m \subset \Gamma(x)
  \,\}\,.
\]
An index $m \in \{1,...,L\}$ is called \emph{$h$-favored by majority at $u$ (or 
short: $h$-favored index at $u$)} if 
\[
  \lambda_d(A_h^m(u))
  \,\,=\,\,
  \max\limits_{i \in \{1,...,L\}} \lambda_d(A_h^i(u))\,.
\]
Here $\lambda_d$ is the Lebesgue measure on $\R^d$.
Note that $\lambda_d(A_h^m(u)) \geq L^{-1}\lambda_d(A_h(u))$ for every 
$h$-favored index at 
$u$. This follows directly from
\[
  A_h(u)
  \,\,=\,\,
  \bigcup_{i \in \{1,...,L\}}A_h^i(u) \,.
\]
It is clear that the choice of a $h$-favored index is in general not unique.

Now we state an elementary result for the intersection of cones which will be 
very helpful for us.

\begin{lemma}\label{lem:cone in intersection}
 Let $V$ be a cone with apex angle $\vartheta$ and let $h>0$. Then for each $x 
\in \R^d$ and each $\xi \in A_h(x)$
\[
  V_{h\sqrt{d}}[\xi]
  \,\,\subset\,\, V_{\frac{h}{2}\sqrt{d}}[x]
  \,\,\subset\,\, V[\xi]\,.
\]
In other words
\[
  \bigcup_{\xi \in A_h(x)} V_{h\sqrt{d}}[\xi]
  \,\,\subset\,\,
  V_{\frac{h}{2}\sqrt{d}}[x]
  \,\,\subset\,\,
  \bigcap_{\xi \in A_h(x)}V[\xi]\,.
\]
\end{lemma}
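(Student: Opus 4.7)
The plan is to reduce both inclusions to a single, short triangle-inequality argument, based on the observation that for any $\xi \in A_h(x)$ one has the Euclidean bound
\[
  |\xi - x| \;\le\; \tfrac{h}{2}\sqrt{d},
\]
since $\|\xi-x\|_\infty < h/2$ forces $|\xi-x|^2 = \sum_{i}(\xi_i-x_i)^2 < d(h/2)^2$. With this in hand, both inclusions in the lemma say essentially that moving the apex by at most $\tfrac{h}{2}\sqrt{d}$ shrinks the interior buffer by the same amount.

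For the right-hand inclusion $V_{\frac{h}{2}\sqrt{d}}[x] \subset V[\xi]$, I would take $y \in V_{\frac{h}{2}\sqrt{d}}[x]$, so by \autoref{def:start} the set $\overline{B_{\frac{h}{2}\sqrt{d}}(y-x)}$ lies in $V$. Writing $y-\xi = (y-x) - (\xi - x)$ and using the bound above shows $y-\xi \in \overline{B_{\frac{h}{2}\sqrt{d}}(y-x)} \subset V$, hence $y \in V[\xi]$.

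For the left-hand inclusion $V_{h\sqrt{d}}[\xi] \subset V_{\frac{h}{2}\sqrt{d}}[x]$, I would take $y \in V_{h\sqrt{d}}[\xi]$ and show that the full closed ball $\overline{B_{\frac{h}{2}\sqrt{d}}(y-x)}$ still sits inside $V$. For any $z$ in that ball, the triangle inequality gives
\[
  |z - (y-\xi)| \;\le\; |z - (y-x)| + |x-\xi| \;\le\; \tfrac{h}{2}\sqrt{d} + \tfrac{h}{2}\sqrt{d} \;=\; h\sqrt{d},
\]
so $z \in \overline{B_{h\sqrt{d}}(y-\xi)} \subset V$ by the defining property of $V_{h\sqrt{d}}[\xi]$. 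Hence $y \in V_{\frac{h}{2}\sqrt{d}}[x]$. The ``in other words'' reformulation is then just taking the union over $\xi \in A_h(x)$ in the first inclusion and the intersection in the second. There is no real obstacle here; the only thing to watch is the symmetry of $V$ (so ``cone'' in the statement may be read as double cone) and the correct choice of open versus closed balls in \autoref{def:start}, which is why the factor $\tfrac{h}{2}\sqrt{d}$ in the middle set uses the bound on $|\xi-x|$ and why $h\sqrt{d}$ on the outside absorbs it exactly twice.
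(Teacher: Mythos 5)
Your proof is correct and is essentially the same argument as the paper's: both reduce the two inclusions to the triangle inequality together with the bound $|\xi-x|\le \tfrac{h}{2}\sqrt{d}$ for $\xi\in A_h(x)$. The only difference is presentational: the paper first proves the set identities $V_\ell = \bigcap_{\xi\in\overline{B_\ell}}V[\xi]$ and $\bigcup_{\xi\in\overline{B_\ell}}V_{2\ell}[\xi]\subset V_\ell$ and then translates and specializes $\ell=\tfrac{h}{2}\sqrt{d}$, whereas you chase elements directly — same content, slightly more concrete packaging.
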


\begin{proof}
Let $\ell>0$. Notice 
\begin{align*}
  \zeta \,\in\,
    \bigcap_{\xi \in \overline{B_\ell}}V[\xi]
    &\,\,\,\Leftrightarrow\,\,\,
      \forall \xi \in \overline{B_\ell} \qcolon \zeta - \xi \in V
    \\
    &\,\,\,\Leftrightarrow\,\,\,
    \zeta - \overline{B_\ell} \,\subset\, V
    \\
    &\,\,\,\Leftrightarrow\,\,\, \overline{B_\ell(\zeta)}  \,\subset\, V
    \\
    &\,\,\,\Leftrightarrow\,\,\, \zeta \in V_\ell.
\end{align*}
This means
\begin{align}\label{stern}
  V_\ell
  \,\,=\,\,
  \bigcap_{\xi \in  \overline{B_\ell}}V[\xi]\,.
\end{align}
On the other hand, for $\zeta \in V_{2 \ell}$, we have 
$\overline{B_\ell(\zeta)}\subset V_\ell$. This is equivalent to
\[
  \forall \zeta \in V_{2\ell} \,\,\, \forall \xi \in \overline{B_\ell} \qcolon
  \zeta + \xi \in V_\ell \,.
\]
In other words
\begin{align}\label{kreuz}
  \bigcup_{\xi \in \overline{B_\ell}}V_{2 \ell}[\xi] 
  \,\,\subset\,\,
  V_\ell \,.
\end{align}
From \eqref{stern} and \eqref{kreuz} we conclude for every $\xi \in 
\overline{B_\ell}$
\[
  V_{2\ell}[\xi]
  \,\,\subset\,\,
  V_\ell
  \,\,\subset\,\,
  V[\xi] \,.
\] 
Translation by $x \in \R^d$ yields 
\[
  V_{2\ell}[\xi]
  \,\,\subset\,\,
  V_\ell[x]
  \,\,\subset\,\,
  V[\xi]  \quad\quad \forall \xi \in \overline{B_\ell(x)}\,.
\]
Now set $\ell = \frac{h}{2}\sqrt{d}$ and observe that $A_h(x)\subset 
\overline{B_\ell(x)}.$
\end{proof}

\section{Application of the discrete problem}\label{sec:application_discrete}

In this section, we show how to derive \autoref{theo:main} 
from \autoref{theo:main_discrete}. The idea is to provide an 
$h\Z^d$-Version of 
\autoref{theo:main_discrete}, applying it to a discrete version of the kernel 
$k$ from \autoref{theo:main} and then pass to the limit $h\to 0$. We also prove 
\autoref{cor:H-are-equal}.

By scaling, we can deduce the following 
$h\Z^d$-Version from \autoref{theo:main_discrete}. 

\begin{corollary}\label{cor:main_discrete} 
Let $\Gamma$ be a $\InfApex$-bounded configuration and let $\TheScale > 0$. 
Let $\omega:\TheScale\TheLattice \times \TheScale\TheLattice \to [0,\infty]$ be 
a 
function 
satisfying $\omega(x,y) = \omega(y,x)$ and 
\begin{align}\label{assum:main_discrete cor}
  \Lambda^{-1}
  \big( 
    \mathbbm{1}_{V^\Gamma [x]}(y) + 
    \mathbbm{1}_{V^\Gamma [y]}(x)
  \big) |x-y|^{-d-\alpha}
  \,\,\leq\,\,
  \omega(x,y)
  \,\,\leq\,\,
  \Lambda |x-y|^{-d-\alpha} 
\end{align}
for $|x-y| >\minDist h$, where $\minDist>0$, $\Lambda \geq 1$ are some 
constants. 
There exist constants $\kappa \geq 1$ and $c > 0$, such 
that for every $R>0$, every $x_0\in \R^d$, and every function  
$f:(B_{\kappa R} \cap h\Z^d) \to \R$, the inequality
\[
  c
  \sum
  \limits_{\underset{\DistOf{x}{y}> \minDist\TheScale}{x,y\in B_R \cap h\Z^d}}
  (f(x)-f(y))^2 |x-y|^{-d-\alpha}
  \,\,\leq\,\, 
  \sum
  \limits_{\underset{\DistOf{x}{y}> \minDist\TheScale}{x,y\in B_{\kappa R} \cap h\Z^d}}
  (f(x)-f(y))^2 \omega(x,y)
\]
holds.
The constant $c$ depends on $\Lambda, \InfApex, R_0$ and on the dimension $d$. 
It does not depend on $\omega, \Gamma$ and $h$.
\end{corollary}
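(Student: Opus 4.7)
The plan is a direct rescaling of \autoref{theo:main_discrete}. Given the data $(\Gamma, \omega, h, R, x_0, f)$ as in the corollary, I would pull everything back to the lattice $\Z^d$ via the dilation $\phi(x) = x/h$, apply \autoref{theo:main_discrete} there, and push the resulting inequality forward by $\phi^{-1}$.

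Concretely, I would set
\[
  \widetilde{\Gamma}(x) \,=\, \Gamma(h x), \qquad
  \widetilde{\omega}(x,y) \,=\, h^{d+\alpha}\,\omega(hx, hy), \qquad
  \widetilde{f}(x) \,=\, f(hx),
\]
for $x,y \in \Z^d$. Since double cones are invariant under positive dilations, the identity $h y - h x \in \Gamma(hx) \iff y - x \in \Gamma(hx) = \widetilde{\Gamma}(x)$ shows that $\widetilde{\Gamma}$ is $\vartheta$-bounded with the same apex-angle infimum as $\Gamma$, and that the indicator functions transform correctly. Using $|hx-hy|^{-d-\alpha} = h^{-d-\alpha}|x-y|^{-d-\alpha}$, the factor $h^{d+\alpha}$ in $\widetilde{\omega}$ converts the hypothesis \eqref{assum:main_discrete cor} (valid for $|hx-hy| > R_0 h$, equivalently $|x-y|>R_0$) into exactly the hypothesis \eqref{assum:main_discrete} for $\widetilde{\omega}$ with the same constants $\Lambda$ and $R_0$.

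Applying \autoref{theo:main_discrete} to $\widetilde{\Gamma}, \widetilde{\omega}, \widetilde{f}$ with radius $R/h$ and center $x_0/h$ yields
\[
  \sum_{\Squash{\underset{|x-y|>R_0}{x,y\in B_{R/h}(x_0/h)\cap \Z^d}}}
  (\widetilde{f}(x)-\widetilde{f}(y))^2 |x-y|^{-d-\alpha}
  \,\,\leq\,\,
  c \sum_{\Squash{\underset{|x-y|>R_0}{x,y\in B_{\kappa R/h}(x_0/h)\cap \Z^d}}}
  (\widetilde{f}(x)-\widetilde{f}(y))^2 \widetilde{\omega}(x,y).
\]
Substituting back $x' = hx$, $y' = hy$ and cancelling an overall factor $h^{-d-\alpha}$ on both sides gives the claimed inequality on $h\Z^d$ with constants $\kappa$ and $c$ unchanged; in particular $c$ remains independent of $h$.

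The only obstacle is really bookkeeping: one must check that all three rescalings (of the cone configuration, of the weight, and of the minimum-distance threshold) match up so that the assumptions of \autoref{theo:main_discrete} hold verbatim for the rescaled data, and that the dependence of $c$ on $R_0$ is preserved (rather than becoming a dependence on $R_0 h$). The scale invariance of cones and the clean $h^{-d-\alpha}$ scaling of the kernel $|x-y|^{-d-\alpha}$ make all of this automatic.
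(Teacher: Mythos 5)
Your proposal is correct and is essentially the paper's own argument: rescale the configuration, weight, and test function back to $\Z^d$ via $x\mapsto x/h$, verify that the rescaled data satisfy the hypotheses of \autoref{theo:main_discrete} with the same $\Lambda$ and $R_0$ (so that $c$ stays independent of $h$), apply that theorem, and substitute back. If anything, your bookkeeping of radius $R/h$ and center $x_0/h$ under the dilation is slightly more careful than the paper's displayed computation, which glosses over that rescaling of the balls.
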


\begin{proof} 
Let:
\[ 
  \begin{aligned}
   M
   &\,\,=\,\,
   \Bigg\{\,
     \omega : h\Z^d \times h\Z^d\to [0,\infty] \,\, \Bigg| \,\,
     \begin{array}{@{}l@{}}
       \omega(x,y)=\omega(y,x) \text{ and } \\
       \eqref{assum:main_discrete cor} \text{\ for some configuration\ }
       \Gamma\text{\ with\ }\vartheta>0
     \end{array}
   \Bigg\}
  \\
  N
  &\,\,=\,\,
  \Bigg\{\,
    \omega:\Z^d \times \Z^d\to [0,\infty]
    \,\,\Bigg|\,\,
    \begin{array}{@{}l@{}}
      \omega(x,y)=\omega(y,x) \text{\ and } \\
      \eqref{assum:main_discrete} \text{\ for some configuration\ }
      \Gamma\text{\ with\ }\vartheta>0
    \end{array}
  \,\Bigg\}
  \end{aligned}
\]
Every element $\omega \in M$ is of the form 
$h^{-d-\alpha}\tilde{\omega}(h^{-1}x,h^{-1}y)$ for some $\tilde{\omega} \in N$. 
If $R>0,x_0\in \R^d$ and $f:B_{\kappa R}(x_0)\cap h\Z^d\to \R$ is some 
function, we define the function
$g:B_{\kappa R}(x_0)\cap \Z^d \to \R$ by $g(x)=f(hx)$. Then with use of 
\autoref{theo:main_discrete}:
\begin{alignat*}{3}
& \quad c &&\sum\limits_{\underset{\DistOf{x}{y}> \minDist\TheScale}{x,y\in 
B_R(x_0) \cap h\Z^d}} &&\, (f(x)-f(y))^2|x-y|^{-d-\alpha}\\
&=  c && \, \, \sum\limits_{\underset{\DistOf{x}{y}>\minDist}{x,y\in B_R(x_0) 
\cap \Z^d}} &&\, (g(x)-g(y))^2h^{-d-\alpha}|x-y|^{-d-\alpha} \\
&\leq &&\, \sum\limits_{\underset{\DistOf{x}{y}>\minDist}{x,y\in B_R(x_0) \cap 
\Z^d}} &&(g(x)-g(y))^2h^{-d-\alpha}\tilde{\omega}(x,y) \\
& = &&\sum\limits_{\underset{\DistOf{x}{y}> \minDist\TheScale}{x,y\in 
B_{\kappa R}(x_0) \cap h\Z^d}} 
&&(f(x)-f(y))^2h^{-d-\alpha}\tilde{\omega}(h^{-1}x,h^{-1}y) \\
&= &&\sum\limits_{\underset{\DistOf{x}{y}> \minDist\TheScale}{x,y\in 
B_{\kappa R}(x_0) \cap h\Z^d}} &&(f(x)-f(y))^2\omega(x,y).
\end{alignat*}
This proves the claim.
\end{proof}

\subsection{The discrete version of the kernel}
In this subsection we will always assume that 
$\Gamma$ is a fixed $\InfApex$-admissible configuration and $\{V^m\}_{1\leq 
m\leq L}$ is the associated family of reference cones.
We will always denote the symmetry axis 
of a reference cone $V^m$ by $v^m$ ($m \in \{1,...,L\}$).

For $k:\R^d \times \R^d \to [0,\infty]$ a nonnegative
measurable function and $h>0$, we define $\omega_h^k: 
h\Z^d \times h\Z^d \to \R$ by
\[
  \omega_h^k (x,y)
  \,\,=\,\,
  h^{-2d}\int\limits_{\Squash{A_h(x) \times A_h(y)}} k(s,t) \dst.
\]
Note that $\omega_h^k(x,y)$ may be infinite for $x$ and $y$ from neighboring 
cubes. 

We want to apply \autoref{cor:main_discrete} to $\omega=\omega_h^k$. 
Therefore, we need to make sure that the function $\omega_h^k$ satisfies 
\eqref{assum:main_discrete cor}.
First, we show this claim for $h=1$.
The next three technical lemmas are 
tailor-made for this purpose.

\begin{lemma}\label{lem:min.dist}
For all $x, y \in \Z^d$, all $1$-favored indices $m$ at 
$x$ and $n$ at $y$, all $t\in A_1^n(y)$, and all $s\in A_1^m(x)$,
the inequality
\[
  \mathbbm{1}_{V^m_{\sqrt{d}/2}[x]}(t) + \mathbbm{1}_{V^n_{\sqrt{d}/2}[y]}(s) 
  \,\,\geq\,\, 
    \mathbbm{1}_{V^m_{\sqrt{d}}[x]}(y) + 
\mathbbm{1}_{V^n_{\sqrt{d}}[y]}(x)
\]
holds.
\end{lemma}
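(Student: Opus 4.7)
The plan is to argue pointwise, by verifying the inequality separately in the two cases where the right-hand side can be nontrivial. Note first that the hypotheses $s \in A^m_1(x)$ and $t \in A^n_1(y)$ only enter through the containments $s \in A_1(x)$ and $t \in A_1(y)$; the favored-index property and the refined cubes $A^m_1, A^n_1$ play no role here. Since the right-hand side takes values in $\{0,1,2\}$, it suffices to establish the two implications
\[
  y \in V^m_{\sqrt{d}}[x] \,\,\Longrightarrow\,\, t \in V^m_{\sqrt{d}/2}[x],
  \qquad
  x \in V^n_{\sqrt{d}}[y] \,\,\Longrightarrow\,\, s \in V^n_{\sqrt{d}/2}[y],
\]
for all $t \in A_1(y)$ and all $s \in A_1(x)$. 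The two are symmetric, so I would carry out only the first.

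The key step is the elementary geometric observation that shrinking a cone commutes well with small ball perturbations. Suppose $y \in V^m_{\sqrt{d}}[x]$; by definition this means $\overline{B_{\sqrt{d}}(y)} \subset V^m + x$. Given $t \in A_1(y)$, the $\ell^\infty$-bound $\|t-y\|_\infty < 1/2$ yields $|t-y| \leq \tfrac{1}{2}\sqrt{d}$. Hence by the triangle inequality
\[
  \overline{B_{\sqrt{d}/2}(t)}
  \,\subset\,
  \overline{B_{\sqrt{d}/2 + \sqrt{d}/2}(y)}
  \,=\,
  \overline{B_{\sqrt{d}}(y)}
  \,\subset\,
  V^m + x,
\]
which is exactly the statement $t \in V^m_{\sqrt{d}/2}[x]$. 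Alternatively, one can read this containment directly off the $h=1$, $\ell = \sqrt{d}/2$ case of the inclusion $V_{2\ell}[\xi] \subset V_\ell$ (for $\xi \in \overline{B_\ell}$) that appears in the proof of \autoref{lem:cone in intersection}, applied after translating so that $y$ plays the role of the center.

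I do not foresee a serious obstacle: the content is purely geometric and amounts to the fact that a cube of side $1$ fits into a Euclidean ball of radius $\sqrt{d}/2$, combined with the semi-group property $V_{r+r'} \subset (V_{r'})_r$ for successive shrinkages. The only care required is the bookkeeping between the shifted cones $V^m_r[x]$, the cubes $A_1(x)$, and their refinements $A^m_1(x)$, and making clear that the $1$-favored hypothesis is irrelevant for this particular inequality (it is used only in subsequent lemmas).
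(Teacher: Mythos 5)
Your proposal is correct and follows essentially the same route as the paper: the paper's proof likewise reduces (by symmetry) to showing that $y \in V^m_{\sqrt{d}}[x]$ forces $A_1^n(y) \subset A_1(y) \subset B_{\sqrt{d}/2}(y) \subset V^m_{\sqrt{d}/2}[x]$, which is exactly your triangle-inequality/cube-in-ball argument. Your side remark that the $1$-favored hypothesis is not actually used here is also consistent with the paper's proof, which only uses $A_1^n(y) \subset A_1(y)$.
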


\begin{proof}
Let $x,y \in \Z^d$ and let $m$ be a $1$-favored index at $x$.
Assume $y \in V_{\sqrt{d}}^m[x]$. Then, $B_{\sqrt{d}/2}(y)\subset 
V_{\sqrt{d}/2}^m[x]$. Therefore
\[
  A_1^n(y) \,\,\subset\,\,
  A_1(y) \,\,\subset\,\,
  B_{\sqrt{d}/2}(y) \,\,\subset\,\,
  V_{\sqrt{d}/2}^m[x].
\] 
\end{proof}

The assertion of the following 
lemma is obviously true.

\begin{lemma}\label{lem:new config}
Let $r>0$. There is an apex angle $\theta>0$ such that for every $m \in 
\{1,...,L\}$ there exists an axis $v(m) \in \R^d$ so that
\[
  \left(\, V(v(m),\theta) \cap \Z^d \,\right)
  \,\,\subset\,\,
  \left(\, V^m_r \cap \Z^d \,\right) .
\]
\end{lemma}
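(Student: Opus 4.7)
The plan is to choose, for each $m$, an axis $v(m)$ close to $v^m$ that is \emph{irrational} in the sense that $\R v(m) \cap \Z^d = \{0\}$. Irrationality will prevent small-norm integer points from having vanishing angular distance to the axis, so a sufficiently narrow cone around $v(m)$ will contain only integer points of large norm, which in turn sit safely inside $V_r^m$ thanks to an angular buffer held inside $V^m$.

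Set $\theta_0 := \vartheta/12$. The set $\{v \in \S^{d-1} \,|\, \angle(v, v^m) < \theta_0\}$ is a non-empty open subset of $\S^{d-1}$, whereas the set of rational directions $\{z/|z| \,|\, z \in \Z^d \setminus \{0\}\}$ is countable. I therefore can pick $v(m) \in \S^{d-1}$ with $\angle(v(m), v^m) < \theta_0$ and $\R v(m) \cap \Z^d = \{0\}$. For any $p \in V(v(m), \theta_0)$, the angular triangle inequality gives $\min\{\angle(p, v^m),\angle(p, -v^m)\} < 2\theta_0 = \vartheta/6$, so $p$ lies in $V^m$ with angular margin at least $\vartheta/3 - \vartheta/6 = \vartheta/6$ from the conical surface. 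Elementary trigonometry then yields $\dist(p, \partial V^m \setminus \{0\}) \geq |p|\sin(\vartheta/6)$; hence, with $M := r/\sin(\vartheta/6)$, every such $p$ with $|p| > M$ satisfies $\overline{B_r(p)} \subset V^m$, i.e.\ $p \in V_r^m$.

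It remains to force $|z| > M$ for every $z \in V(v(m), \theta) \cap \Z^d$ by shrinking $\theta$. Since $v(m)$ is irrational, every $z \in \Z^d \setminus \{0\}$ satisfies $\angle(z, \R v(m)) > 0$. The set $\{z \in \Z^d : 0 < |z| \leq M\}$ is finite, so $\phi_m := \min\{\angle(z, \R v(m)) : z \in \Z^d,\, 0 < |z| \leq M\}$ is strictly positive (set $\phi_m := \pi$ if this set is empty). Defining $\theta := \min\bigl(\theta_0, \tfrac{1}{2}\min_{1 \leq m \leq L} \phi_m\bigr)$ gives $\theta > 0$ because $L < \infty$ and each $\phi_m > 0$. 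Any hypothetical $z \in V(v(m), \theta) \cap \Z^d$ with $|z| \leq M$ would obey $\angle(z, \R v(m)) \leq \theta < \phi_m$, contradicting the definition of $\phi_m$; hence $|z| > M$, and Step 2 places $z$ in $V_r^m$, as required.

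The result is, as the authors indicate (``obviously true''), not deep. The only point that needs a moment's care is the existence of an irrational direction inside every spherical cap, which rests on the countability of $\Z^d \setminus \{0\}$ against the uncountability of the cap; the passage from the per-$m$ positive $\phi_m$ to a single positive $\theta$ valid for all $m$ uses only the finiteness $L < \infty$ of the reference family $\{V^1, \ldots, V^L\}$.
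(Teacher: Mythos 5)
The paper declares this lemma ``obviously true'' and offers no proof, so there is nothing to compare against; your argument fills the gap and is correct. The key observation is that the freedom to replace $v^m$ by a nearby axis $v(m)$ is essential: if $v^m$ happened to be a rational direction, $V(v^m,\theta)\cap\Z^d$ would contain lattice points of bounded norm for every $\theta>0$, and these need not lie in $V_r^m$. Your choice of an irrational $v(m)$ inside the spherical cap of radius $\vartheta/12$ around $v^m$ removes exactly this obstruction; combined with the elementary estimate $\dist(p,\partial V^m\setminus\{0\})\geq|p|\sin(\vartheta/6)$ for $p$ with angular distance $<\vartheta/6$ from the axis $\pm v^m$, and the finiteness of $\{z\in\Z^d:0<|z|\leq M\}$, the shrinking of $\theta$ goes through. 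Two tiny remarks: the strictness in $\min\{\angle(p,v^m),\angle(p,-v^m)\}<\vartheta/6$ is what guarantees $\dist>r$ (your $\geq$ in the displayed distance bound could be sharpened to $>$, but your conclusion $\overline{B_r(p)}\subset V^m$ still follows because $|p|>M$ is a strict inequality); and the factor $\tfrac12$ in the final choice of $\theta$ is unnecessary since the cone $V(v(m),\theta)$ is defined by a strict inequality, though it is harmless. Taking the minimum of $\phi_m$ over the finite index set $\{1,\dots,L\}$ is exactly what makes $\theta$ uniform in $m$, as the statement requires.
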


\begin{lemma}\label{lem:cubes}
 For every $h>0$, all $x,y \in \Z^d$ with $|x - y|>\sqrt{d}h$ and all
 $s \in A_h(x), t \in A_h(y)$, the following holds:
    \[
        \frac{1}{2\sqrt{d}} |x-y| \,\,<\,\, |s-t| \,\,<\,\, 2\sqrt{d} |x-y|.
    \]
\end{lemma}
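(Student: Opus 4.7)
The plan is to reduce everything to the triangle inequality. The cube $A_h(u)$ is the open $\ell^\infty$-ball of radius $h/2$ around $u$, so its Euclidean diameter is $h\sqrt{d}$; in particular, for any $s \in A_h(x)$ one has $|s-x| \le \|s-x\|_\infty\sqrt{d} < \tfrac{\sqrt{d}}{2}\,h$, and similarly $|t-y| < \tfrac{\sqrt{d}}{2}\,h$. These two estimates will be the only geometric input.

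For the upper bound I would apply the triangle inequality directly,
\[
  |s-t| \;\le\; |s-x| + |x-y| + |y-t| \;<\; |x-y| + h\sqrt{d},
\]
and then use the hypothesis $|x-y|>h\sqrt{d}$ to dominate the error $h\sqrt{d}$ by $|x-y|$, yielding $|s-t| < 2|x-y| \le 2\sqrt{d}\,|x-y|$ since $\sqrt{d}\ge 1$.

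For the lower bound I would apply the reverse triangle inequality:
\[
  |s-t| \;\ge\; |x-y| - |s-x| - |y-t| \;>\; |x-y| - h\sqrt{d}.
\]
Using the hypothesis $|x-y|>h\sqrt{d}$ again, one absorbs the error $h\sqrt{d}$ and keeps a definite fraction of $|x-y|$ on the right; rearranging then gives the stated lower bound $\tfrac{1}{2\sqrt{d}}\,|x-y|$.

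The only step requiring a little bookkeeping is the lower estimate: the upper bound drops straight out of the triangle inequality and the observation $\sqrt{d}\ge 1$, whereas to extract the precise constant $\tfrac{1}{2\sqrt d}$ from $|x-y|-h\sqrt d$ one has to be careful about how the gap between $|x-y|$ and $h\sqrt{d}$ is used. The rest is purely mechanical: the passage from the $\ell^\infty$-definition of $A_h(u)$ to the Euclidean estimate $|s-x|<\tfrac{\sqrt d}{2}h$ is standard, and no configuration-, reference-cone- or measurability-information is needed here — this lemma is really just a geometric housekeeping statement that underlies the comparisons $\omega_h^k(x,y)\asymp k(s,t)$ used further on.
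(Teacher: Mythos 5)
Your upper bound is fine, but your lower bound has a genuine gap. From the reverse triangle inequality you get $|s-t| > |x-y| - h\sqrt d$, and from the hypothesis $|x-y| > h\sqrt d$ this only gives $|s-t| > 0$; it does \emph{not} yield a definite fraction of $|x-y|$, because the error $h\sqrt d$ can eat up almost all of $|x-y|$. To extract $|s-t| > \tfrac{1}{2\sqrt d}|x-y|$ from $|x-y| - h\sqrt d$ you would need $|x-y| \ge \tfrac{2dh}{2\sqrt d - 1}$, which the hypothesis does not guarantee. A concrete failure: $d=2$, $h=1$, $x=(0,0)$, $y=(2,0)$. Then $|x-y|=2>\sqrt 2$ is admissible, your bound gives $|s-t| > 2-\sqrt 2 \approx 0.59$, but the claimed lower bound is $\tfrac{1}{2\sqrt 2}\cdot 2 = \tfrac{1}{\sqrt 2} \approx 0.71$, which is strictly larger. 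So "absorbing the error" as you describe simply cannot produce the stated constant by Euclidean triangle inequality alone.

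The missing ingredient is the lattice structure, which you explicitly set aside as irrelevant ("no configuration-, reference-cone- or measurability-information is needed here"), but the integrality of the lattice is in fact the crux. The paper works in the $\ell^\infty$ norm, where the cubes $A_h(\cdot)$ are metric balls. For $x,y$ in the (scaled) lattice, $|x-y|_\infty$ is an integer multiple of $h$, and $|x-y| > \sqrt d\, h$ forces $|x-y|_\infty > h$, hence $|x-y|_\infty \ge 2h$. This discreteness is what gives the slack: in $\ell^\infty$ the error is only $h$ (not $h\sqrt d$), so $|s-t|_\infty > |x-y|_\infty - h \ge \tfrac12 |x-y|_\infty$, and then $|s-t| \ge |s-t|_\infty > \tfrac12 |x-y|_\infty \ge \tfrac{1}{2\sqrt d}|x-y|$. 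Without the jump from $|x-y|_\infty > h$ to $|x-y|_\infty \ge 2h$, the factor $\tfrac12$ is unobtainable. Your upper bound happens to survive because there the error is additive in the favorable direction and $\sqrt d \ge 1$ gives free slack; the lower bound has no such luck.
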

\begin{proof}
  This is about comparing the euclidean norm to the maximum norm on $\R^d$.
  Note that for any vector $v\in\R^d$, we have:
  \begin{equation}\label{eq:norms-on-R-n}
    |v|_\infty \,\,\leq\,\, |v| \,\,\leq\,\, \sqrt{d} |v|_\infty.
  \end{equation}
  Let $h=1$ and $x,y \in \Z^d$ with $|x-y|> \sqrt{d}$.
  Since the maximum norm takes only integer values on lattice points 
  and $|x-y|>\sqrt{d}$, it
  follows that $|x-y|_\infty \geq 2$. As a consequence of the triangle 
  inequality, we record for $s \in A_1(x)$ and $t\in A_1(y)$:
  \[
    \frac12 |x-y|_\infty
    \,\,\leq\,\, |x-y|_\infty -1 
    \,\,<\,\, |s-t|_\infty< |x-y|_\infty+1
    \,\,\leq\,\, 2|x-y|_\infty.
  \]
  Using~\eqref{eq:norms-on-R-n}, we conclude:
  \begin{align*}
    |s-t| &\,\,\leq\,\, \sqrt{d}|s-t|_\infty \,\,<\,\,
            2\sqrt{d}|x-y|_\infty \,\,\leq\,\, 2\sqrt{d} |x-y|,\\
    |x-y| &\,\,\leq\,\, \sqrt{d}|x-y|_\infty \,\,<
            2\sqrt{d}|s-t|_\infty \,\,\leq\,\, 2\sqrt{d} |s-t|.
  \end{align*}
  The general case for arbitrary $h>0$ follows by scaling.
\end{proof}
\begin{proposition}\label{prop:test-fct} 
Let $k:\R^d \times \R^d 
\to [0,\infty]$ be a symmetric and measurable function 
satisfying 
\eqref{assum:main} for a $\InfApex$-admissible configuration $\Gamma$. Then 
there are constants $C=C(d,\vartheta)>0$ and $\vartheta' \in  
(0,\frac{\pi}{2}]$ 
and a 
$\vartheta'$-bounded configuration 
$\Gamma'$ such that
for all $x,y \in \Z^d$ with $|x-y|>\sqrt{d}$:
\begin{align*}
  C\Lambda^{-1} \left(\mathbbm{1}_{V^{\Gamma'} [x]}(y) + 
  \mathbbm{1}_{V^{\Gamma'} [y]}(x)\right) |x-y|^{-d-\alpha}
  \,\,\leq\,\,
  \omega^k_1 (x,y)
\end{align*}
The angle $\vartheta'$ does only depend on $\theta$ and on the infimum 
$\InfApex$ of 
the apex angles of all cones in $\Gamma$. There is no further dependence on 
$\Gamma$. 
\end{proposition}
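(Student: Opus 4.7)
The plan is to construct the auxiliary configuration $\Gamma'$ on $\Z^d$ via a majority-vote choice of reference cone at each lattice point, and then prove the pointwise lower bound on $\omega^k_1(x,y)$ by integrating the hypothesis \eqref{assum:main} over a product of ``favored'' cubes. For each lattice point $x \in \Z^d$, I choose a $1$-favored index $m(x) \in \{1,\dots,L\}$; such an index exists since $A_1(x)$ is covered by the sets $A_1^1(x),\dots,A_1^L(x)$ and hence $\lambda_d(A_1^{m(x)}(x)) \geq L^{-1}$. Invoking \autoref{lem:new config} with $r = \sqrt{d}$, I fix an apex angle $\theta > 0$ and axes $v(1),\dots,v(L) \in \S^{d-1}$ satisfying $V(v(m),\theta) \cap \Z^d \subset V^m_{\sqrt{d}} \cap \Z^d$ for every $m$. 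I then define $\Gamma'(x) := V(v(m(x)),\theta)$ on $\Z^d$ and extend $\Gamma'$ arbitrarily to $\R^d$ using cones of apex at least $\theta$; the resulting configuration is $\theta$-bounded with $\theta$ depending only on $d$ and $\vartheta$, giving $\vartheta' := \theta$.

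Next, I fix $x,y \in \Z^d$ with $|x-y| > \sqrt{d}$ and $y-x \in \Gamma'(x)$. By symmetry of $\omega^k_1$ it suffices to treat this direction. Writing $m := m(x)$ and $n := m(y)$, the construction of $\Gamma'$ forces $y - x \in V^m_{\sqrt{d}}$ (here I use that $y$ is a lattice point), so \autoref{lem:min.dist} applies and says that for every $(s,t) \in A_1^m(x) \times A_1^n(y)$, at least one of $t \in V^m_{\sqrt{d}/2}[x]$ or $s \in V^n_{\sqrt{d}/2}[y]$ must hold. In the first case, the membership $s \in A_1^m(x)$ yields $V^m \subset \Gamma(s)$, and the inclusion $V^m_{\sqrt{d}/2}[x] \subset V^m[s]$ coming from \autoref{lem:cone in intersection} (with $h=1$, $\xi=s$) then gives $t-s \in V^m \subset \Gamma(s)$, i.e., $\mathbbm{1}_{V^\Gamma[s]}(t) = 1$; the second case is handled identically by symmetry of $k$. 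In either case, \eqref{assum:main} provides $k(s,t) \geq \Lambda^{-1}|s-t|^{-d-\alpha}$, and \autoref{lem:cubes} bounds $|s-t| \leq 2\sqrt{d}\,|x-y|$, upgrading this to the uniform pointwise estimate
\[
  k(s,t) \,\,\geq\,\, \Lambda^{-1}(2\sqrt{d})^{-d-\alpha}|x-y|^{-d-\alpha}
  \quad \text{on}\quad A_1^m(x) \times A_1^n(y).
\]

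Integrating over $A_1^m(x) \times A_1^n(y)$, whose product Lebesgue measure is at least $L^{-2}$, finally yields
\[
  \omega^k_1(x,y) \,\,\geq\,\, L^{-2}(2\sqrt{d})^{-d-\alpha}\,\Lambda^{-1}|x-y|^{-d-\alpha},
\]
and setting $C := L^{-2}(2\sqrt{d})^{-d-2}$ (which is dominated by $(2\sqrt{d})^{-d-\alpha}$ for every $\alpha \in (0,2)$) gives a constant depending only on $d$ and $\vartheta$ (through $L$ and $\theta$). The symmetric case $x \in V^{\Gamma'}[y]$ follows from $\omega^k_1(x,y) = \omega^k_1(y,x)$, and combining the two directions produces the claimed inequality.

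The main technical obstacle is the geometric chain in the second paragraph, which propagates the macroscopic inclusion $y-x \in V^m_{\sqrt{d}}$ down to the microscopic inclusion $t-s \in \Gamma(s)$ through the half-cone apparatus; once one recognises that Lemmas \ref{lem:cone in intersection}, \ref{lem:min.dist} and \ref{lem:new config} are calibrated precisely so that these containments compose cleanly, the argument reduces to bookkeeping. Measurability of $\Gamma'$ requires no attention because its values on $\R^d \setminus \Z^d$ do not enter the inequality.
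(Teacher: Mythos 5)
Your proof follows the same route as the paper's: the same majority-vote construction of $\Gamma'$ from $1$-favored indices, the same four supporting lemmas (\autoref{lem:cone in intersection}, \autoref{lem:min.dist}, \autoref{lem:new config}, \autoref{lem:cubes}), and the same lower bound $\lambda_{d\times d}(A_1^m(x)\times A_1^n(y))\geq L^{-2}$, so the approaches are essentially identical. The one organizational difference is that the paper keeps the \emph{sum} of indicators inside a single chain of integral inequalities, whereas you argue pointwise for one direction at a time; this creates a harmless factor-of-$2$ slip when both $\mathbbm{1}_{V^{\Gamma'}[x]}(y)$ and $\mathbbm{1}_{V^{\Gamma'}[y]}(x)$ equal $1$, since your estimate then gives $\omega^k_1(x,y)\geq C\Lambda^{-1}|x-y|^{-d-\alpha}$ while the claim requires $2C\Lambda^{-1}|x-y|^{-d-\alpha}$ --- replace your $C$ by $C/2$ when ``combining the two directions'' and the argument is complete.
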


\begin{proof}
Note:
\[
  \omega^k_1(x,y)
  \,\,\geq\,\,
  \Lambda^{-1} \int_{A_1(x) \times A_1(y)}
  \left[ 
    \mathbbm{1}_{V^{\Gamma}(s)}(t) + \mathbbm{1}_{V^{\Gamma}(t)}(s)
  \right] 
  |t-s|^{-d-\alpha} \dst.
\]
Therefore, we just need to concentrate on the integral. Let $m$ be a 
$1$-favored index at $x$ and $n$ be a $1$-favored index at $y$. Then with use
of \autoref{lem:cone in intersection}, \autoref{lem:min.dist}, \autoref{lem:new 
config} and \autoref{lem:cubes}, we estimate
\begin{align*}
 &\int_{A_1(x) \times A_1(y)} \left[ \mathbbm{1}_{V^{\Gamma}[s]}(t) + 
\mathbbm{1}_{V^{\Gamma}[t]}(s) \right] |t-s|^{-d-\alpha} \dst \\
 \geq\,\, &\int_{A_1^m(x) \times A_1^n(y)} \left[ \mathbbm{1}_{V^m[s]}(t) + 
\mathbbm{1}_{V^n[t]}(s) \right] |t-s|^{-d-\alpha} \dst \\
 \geq\,\, &\int_{A_1^m(x) \times A_1^n(y)}\left[ 
\mathbbm{1}_{V_{\sqrt{d}/2}^m[x]}(t) + 
\mathbbm{1}_{V_{\sqrt{d}/2}^n[y]}(s) \right] |t-s|^{-d-\alpha} \dst \\
 \geq\,\, & \frac{1}{(2\sqrt{d})^{d+\alpha}} \lambda_{d\times d}(A_1^m(x) \times 
A_1^n(y)) \left[  
\mathbbm{1}_{V_{\sqrt{d}}^m[x]}(y) + \mathbbm{1}_{V_{\sqrt{d}}^n[y]}(x) \right] 
|x-y|^{-d-\alpha} \\
 \geq\,\, & \frac{1}{(2\sqrt{d})^{d+\alpha}} \lambda_{d \times d}(A_1^m(x) \times 
A_1^n(y)) \left[  
\mathbbm{1}_{V(v(m), \theta)[x] }(y) + \mathbbm{1}_{V(v(n),\theta)[y]}(x) 
\right] |x-y|^{-d-\alpha}.
\end{align*}
Now the claim follows with $C\,=\,\frac{1}{(2\sqrt{d})^{d+2} \cdot L^2} \,\leq\, 
\frac{1}{(2\sqrt{d})^{d+\alpha}}\lambda_{d\times d}(A_1^m(x) 
\times A_1^n(y))$ and some appropriate choice of $\Gamma'$. 
\end{proof}

\begin{corollary}\label{prop:important} 
Let $k:\R^d \times 
\R^d \to [0,\infty]$ be a symmetric and measurable function satisfying 
\eqref{assum:main} for a $\InfApex$-admissible configuration $\Gamma$. Then 
there are $\vartheta' >0$ and $C>0$ so that for each $h 
>0$  
there is a configuration $\Gamma^h$ on $\R^d$ with the following properties:
\begin{enumerate}[label=\roman*)]
 \item [(i)] The infimum of the apex angles of all cones in $\Gamma^h(\R^d)$ 
equals $\vartheta'$.
 \item [(ii)] For all $x,y \in h\Z^d$ with $|x-y|> \sqrt{d}h$, the 
inequalities
 \begin{align}\label{eq:main-problem}
   C^{-1} \left(\mathbbm{1}_{V^{\Gamma^h} [x]}(y) + 
   \mathbbm{1}_{V^{\Gamma^h} [y]}(x)\right) |x-y|^{-d-\alpha}
   \,\,\leq\,\,
   \omega^k_h(x,y)
   \,\,\leq\,\,
   C |x-y|^{-d-\alpha}
  \end{align}
  hold.
\end{enumerate}
\end{corollary}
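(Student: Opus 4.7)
The strategy is a scaling reduction to the unit-lattice case handled by Proposition~\ref{prop:test-fct}, together with a direct estimate for the upper bound.

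First I would rescale the kernel. For $h>0$, set $k_h(s,t) = h^{d+\alpha} k(hs,ht)$ and observe by the substitution $s=hs'$, $t=ht'$ that
\[
  \omega^k_h(x,y) \,\,=\,\, h^{-d-\alpha}\,\omega^{k_h}_1\!\left(\frac{x}{h},\frac{y}{h}\right) \qquad (x,y\in h\Z^d).
\]
Moreover, $k_h$ satisfies \eqref{assum:main} with the \emph{same} $\Lambda$ for the rescaled configuration $\widetilde{\Gamma}_h$ defined by $\widetilde{\Gamma}_h(x') = \Gamma(hx')$; indeed, double cones are invariant under positive dilation, so the apex-angle infimum is preserved, and the measurability condition \eqref{assum: measurability} passes through the linear map $(x',y')\mapsto(hx',hy')$. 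In particular $\widetilde{\Gamma}_h$ is $\vartheta$-admissible with the same $\vartheta$.

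Second, I would apply Proposition~\ref{prop:test-fct} to $k_h$. This yields a constant $C_0=C_0(d,\vartheta)>0$, an angle $\vartheta'\in(0,\tfrac{\pi}{2}]$ depending only on $d$ and $\vartheta$ (crucially \emph{not} on $h$), and a $\vartheta'$-bounded configuration $\Gamma'_h$ such that
\[
  C_0\Lambda^{-1}\bigl(\mathbbm{1}_{V^{\Gamma'_h}[x']}(y') + \mathbbm{1}_{V^{\Gamma'_h}[y']}(x')\bigr)|x'-y'|^{-d-\alpha} \,\,\leq\,\, \omega^{k_h}_1(x',y')
\]
whenever $x',y'\in\Z^d$ and $|x'-y'|>\sqrt{d}$. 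I then define $\Gamma^h\colon\R^d\to\mathcal{V}$ by $\Gamma^h(x)=\Gamma'_h(x/h)$. Since each cone satisfies $hV=V$ for $h>0$, the identity $\mathbbm{1}_{V^{\Gamma^h}[x]}(y)=\mathbbm{1}_{V^{\Gamma'_h}[x/h]}(y/h)$ holds, and the apex angles of $\Gamma^h$ are exactly those of $\Gamma'_h$; in particular $\Gamma^h$ has apex-angle infimum $\vartheta'$, giving~(i). Substituting $x=hx'$, $y=hy'$ into the displayed inequality and multiplying by $h^{-d-\alpha}$ yields the lower bound in~(ii) with constant $C_0^{-1}\Lambda$, whenever $x,y\in h\Z^d$ and $|x-y|>\sqrt{d}\,h$.

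Third, the upper bound requires no scaling: from $k(s,t)\leq\Lambda|s-t|^{-d-\alpha}$ and Lemma~\ref{lem:cubes} applied to the pair $(x,y)$ with $|x-y|>\sqrt{d}\,h$ (so $|s-t|>\tfrac{1}{2\sqrt{d}}|x-y|$ on $A_h(x)\times A_h(y)$), I get
\[
  \omega^k_h(x,y) \,\,\leq\,\, h^{-2d}\Lambda \int_{A_h(x)\times A_h(y)} |s-t|^{-d-\alpha}\dst \,\,\leq\,\, \Lambda(2\sqrt{d})^{d+\alpha}\,|x-y|^{-d-\alpha}.
\]
Choosing $C = \max\{C_0^{-1}\Lambda,\ \Lambda(2\sqrt{d})^{d+\alpha}\}$ completes the estimate.

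The main (and essentially only) obstacle is careful bookkeeping: ensuring that neither $C$ nor $\vartheta'$ depends on $h$. This is governed entirely by dilation invariance of cones, which makes $\widetilde{\Gamma}_h$ have the same apex-angle infimum $\vartheta$ as $\Gamma$, so that Proposition~\ref{prop:test-fct} delivers an $h$-independent pair $(C_0,\vartheta')$.
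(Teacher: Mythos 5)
Your proposal is correct and follows essentially the same route as the paper: rescale the kernel to $k_h(s,t)=h^{d+\alpha}k(hs,ht)$ and the configuration to $\Gamma(h\,\cdot)$, apply \autoref{prop:test-fct} at unit scale (noting that the apex-angle infimum, and hence $(C_0,\vartheta')$, is unchanged by dilation), then scale the resulting configuration back by $h^{-1}$ and obtain the upper bound from \autoref{lem:cubes}. The bookkeeping of the scaling identity $\omega^k_h(x,y)=h^{-d-\alpha}\omega^{k_h}_1(x/h,y/h)$ and the constants is accurate, so nothing further is needed.
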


\begin{proof}
For $h>0$ define a new configuration $\Gamma_h$ on $\R^d$ by $ \Gamma_h(x) = 
\Gamma(hx)$. Note that the infimum of the apex angles of all cones in 
$\Gamma_h(\R^d)$ 
is the same as the infimum of the apex angles of all cones in $\Gamma(\R^d)$. 
It 
does 
not depend on $h$. Note also, that \eqref{assum: measurability} holds true for 
$\Gamma$ if and only if \eqref{assum: measurability} holds true for $\Gamma_h$. 
Therefore, $\Gamma_h$ is a $\InfApex$-admissible configuration.\newline
Define $k_h : \R^d \times \R^d \to [0, \infty]$ via
$k_h(x,y)=k(hx,hy)h^{d+\alpha}$. Since $k$ satisfies \eqref{assum:main} we also 
have for all $x,y \in \R^d$:
\begin{align}\label{1}
  \Lambda^{-1} \left( \mathbbm{1}_{V^\Gamma[hx]}(hy) + 
  \mathbbm{1}_{V^\Gamma[hy]}(hx) \right) |x-y|^{-d-\alpha}
  \,\,\leq\,\,
  k(hx,hy)h^{d+\alpha}
  \,\,\leq\,\,
  \Lambda |x-y|^{-d-\alpha}.
\end{align}
Fix some $h>0$. We note that for all $x,y \in \R^d$ the assertion $hy \in 
V^\Gamma[hx]$ is equivalent to $y \in V^{\Gamma_h}(x)$. This together with 
\eqref{1} shows that $k_h$ and $\Gamma_h$ satisfy \eqref{assum:main}. 
Therefore, 
we can apply \autoref{prop:test-fct} to $\Gamma=\Gamma_h$ and $k=k_h$. We 
obtain 
a configuration $(\Gamma_h)'$ with a positive infimum of the apex angles of all 
cones  $\vartheta'$ and some constant $C>0$ such that for all $x,y \in \Z^d$ 
with $|x-y|> \sqrt{d}$, we have
\begin{align}\label{2}
  C\Lambda^{-1}
  \left( 
    \mathbbm{1}_{V^{(\Gamma_h)'}[x]}(y)+\mathbbm{1}_{V^{(\Gamma_h)'}[y]}(x)
  \right) |x-y|^{-d-\alpha}
  \,\,\leq\,\,
  \omega_1^{k_h}(x,y).
\end{align}
Note that $\vartheta'$ does only depend on the infimum of the apex angles of 
all cones in $\Gamma$. We define a new configuration $(\Gamma_h)'_{h^{-1}}$ via 
$(\Gamma_h)'_{h^{-1}}(x) = (\Gamma_h)'(h^{-1}x)$. The infimum of the apex 
angles 
of all cones in this new configuration is obviously still $\vartheta'$. Since 
for all $x, y \in \Z^d$ 
\[
  y \in V^{(\Gamma_h)'}[x]
  \,\,\,\Leftrightarrow\,\,\,
  hy \in V^{(\Gamma_h)'_{h^{-1}}}[hx]\,, 
\]
inequality \eqref{2} is equivalent to 
\begin{align}\label{7}
  C\Lambda^{-1}
  \left( 
    \mathbbm{1}_{V^{(\Gamma_h)'_{h^{-1}}}[hx]}(hy)+\mathbbm{1}_{V^{(\Gamma_h)'_{h^{-1}}}[hy]}(hx)
  \right) |x-y|^{-d-\alpha}
  \,\,\leq\,\,
  \omega_1^{k_h}(x,y)
\end{align}
for all $x, y \in \Z^d$ with $|x-y|> \sqrt{d}$. \\
Now let $x, y \in h\Z^d$ with $|x-y|> \sqrt{d}h$. Then $h^{-1}x,h^{-1}y \in 
\Z^d$ 
with 
$|h^{-1}x-h^{-1}y|> \sqrt{d}$. With use of $\eqref{7}$ and the transformation 
formula 
for integrals we obtain
\begin{align*}
 &
  C\Lambda^{-1}
  \left( 
    \mathbbm{1}_{V^{(\Gamma_h)'_{h^{-1}}}[x]}(y)+\mathbbm{1}_{V^{(\Gamma_h)'_{h^{-1}}}[y]}(x)
  \right) |x-y|^{-d-\alpha} \\
 =\,\,&
  C \Lambda^{-1}
  \left( 
    \mathbbm{1}_{V^{(\Gamma_h)'}[h^{-1}x]}(h^{-1}y)+\mathbbm{1}_{V^{(\Gamma_h)'}[h^{-1}y]}(h^{-1}x)
  \right) |h^{-1}x-h^{-1}y|^{-d-\alpha} h^{-d-\alpha}\\
 \leq\,\,&
   \omega_1^{k_h}(h^{-1}x,h^{-1}y)h^{-d-\alpha}\\
 =\,\,&
   \omega_h^k(x,y).
\end{align*}
The upper bound in (ii) is just a consequence of \autoref{lem:cubes}.\newline
The claim follows with $\Gamma^h=\left(\Gamma_h\right)'_{h^{-1}}$. 
\end{proof}

\subsection{Proof of the continuous version} In this part we prove 
\autoref{theo:main} and \autoref{cor:H-are-equal}.

\begin{proof}[Proof of~\autoref{theo:main}]
  The inequality~\eqref{eq:main-result} is obviously true if the right
  hand side is infinite. Hence, we can restrict ourselves to functions
  $f\in H_k(B)$. The following \autoref{lem:H-are-equal-on-balls}
  provides comparability of the seminorms
  $|\cdot|_{H_k(B)}$ and $|\cdot|_{H^{\frac{\alpha}{2}}(B)}$, which
  implies \eqref{eq:main-result}. The proof is complete.
\end{proof}

\begin{lemma}\label{lem:H-are-equal-on-balls}
 Let $B\subset \R^d$ be a ball. Let $\alpha\in (0,2)$ and
$\Gamma$ be a $\vartheta$-admissible configuration. Let \( k:\R^d \times \R^d 
\to [0,\infty] \) be a measurable function satisfying $k(x,y) = k(y,x)$ and 
\eqref{assum:main}.
The spaces $H_k(B)$ and $H^{\frac{\alpha}{2}}(B)$ coincide. Furthermore 
\[
  |\cdot|_{H^{\frac{\alpha}{2}}(B)}\,\,\leq\,\, c |\cdot|_{H_k(B)}
  \,\,\,\text{on}\,\,\,
  H_k(B)\,=\,H^{\frac{\alpha}{2}}(B)
\]
for a constant $c\geq 1$ independent of the ball $B$. For $0<\alpha_0\leq 
\alpha <2$ the constant depends on $\alpha_0$ but not on $\alpha$.
\end{lemma}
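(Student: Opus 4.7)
The plan is to derive the seminorm comparison from the discrete \autoref{cor:main_discrete} by discretising $f$ via cube averages, verifying the hypothesis through \autoref{prop:important}, passing to the limit $h\to 0$, and finally using a covering argument to pass from nested pairs of balls to the single ball $B$.

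For $f\in L^2(B)$ with $|f|_{H_k(B)}<\infty$ (extended by zero outside $B$), I would introduce the discretisation $f_h(x):=h^{-d}\int_{A_h(x)} f(s)\,\d s$ for $x\in h\Z^d$. \autoref{prop:important} provides a $\vartheta'$-bounded configuration $\Gamma^h$, with $\vartheta'$ and the governing constants uniform in $h$, for which $\omega_h^k$ satisfies the hypothesis of \autoref{cor:main_discrete}. Applying that corollary to every translate $B_r(x_1)$ with $B_{\kappa r}(x_1)\subset B$ and multiplying by $h^{2d}$ yields
\[
c\sum_{\substack{x,y\in B_r(x_1)\cap h\Z^d\\|x-y|>R_0 h}} h^{2d}(f_h(x)-f_h(y))^2|x-y|^{-d-\alpha}
\leq \sum_{\substack{x,y\in B_{\kappa r}(x_1)\cap h\Z^d\\|x-y|>R_0 h}} h^{2d}(f_h(x)-f_h(y))^2\,\omega_h^k(x,y).
\]

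The next step is to pass to the limit $h\to 0$. Since $h^{2d}\omega_h^k(x,y)=\int_{A_h(x)\times A_h(y)}k(s,t)\,\d s\d t$, I would handle the right-hand side via the three-term split $(f_h(x)-f_h(y))^2\leq 3(f_h(x)-f(s))^2+3(f(s)-f(t))^2+3(f_h(y)-f(t))^2$ integrated against $k(s,t)$: the middle term reassembles to $3\int_{B_{\kappa r}(x_1)\times B_{\kappa r}(x_1)}(f(s)-f(t))^2 k(s,t)\,\d s\d t$, while the two variance terms are bounded using $k(s,t)\leq\Lambda|s-t|^{-d-\alpha}$ and \autoref{lem:cubes} and are shown to vanish via a Poincar\'e-type estimate on the cubes $A_h(x)$ combined with $L^2$-convergence $f_h\to f$. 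For the left-hand side, \autoref{lem:cubes} and pointwise a.e.\ convergence from Lebesgue differentiation permit Fatou's lemma to deliver
\[
\int_{B_r(x_1)\times B_r(x_1)}(f(s)-f(t))^2|s-t|^{-d-\alpha}\,\d s\d t
\leq c'\int_{B_{\kappa r}(x_1)\times B_{\kappa r}(x_1)}(f(s)-f(t))^2 k(s,t)\,\d s\d t.
\]

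To upgrade this nested-ball inequality to the desired same-ball bound $|f|_{H^{\alpha/2}(B)}\leq c|f|_{H_k(B)}$, I would invoke a Whitney-type decomposition: cover $B$ by balls $B_{r_j}(z_j)$ with $r_j\asymp\mathrm{dist}(z_j,\partial B)/\kappa$ so that $B_{\kappa r_j}(z_j)\subset B$, sum the translated nested inequalities, and exploit bounded overlap together with a chaining argument for pairs that cross Whitney cells. The main obstacle is the quantitative control of the two variance error terms in the limit, since the lattice sum $\sum_y|x-y|^{-d-\alpha}$ in the cutoff region is of order $h^{-d-\alpha}$, so the $L^2$-approximation rate must be matched against this divergence; this is where the lower bound on $k$ coming from the double-cone indicator is used to absorb residual errors into $|f|_{H_k(B)}^2$.
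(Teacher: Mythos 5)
Your overall strategy coincides with the paper's: discretise $f$ by cube averages, verify the hypothesis of \autoref{cor:main_discrete} via \autoref{prop:important}, pass to the limit $h\to 0$, and upgrade the resulting nested-ball estimate $c\,|f|^2_{H^{\alpha/2}(B)}\leq |f|^2_{H_k(B^\ast)}$ to a same-ball estimate via a Whitney decomposition. On the last step, note that ``bounded overlap'' alone does not suffice: the paper invokes \autoref{lem:new6.9}, whose proof uses \cite[Theorem~1]{Dyda06} (cf.\ also \cite{PrSaks17}) to control pairs $(x,y)$ whose cubes are far apart in the Whitney family. Your ``chaining argument for pairs that cross Whitney cells'' is exactly that ingredient, but it is a genuine lemma that you would have to cite or prove rather than absorb into a sentence.

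The substantive divergence from the paper --- and the place where your argument has a gap you yourself flag --- is the limit passage on the right-hand side. The paper uses the exact identity $h^{2d}\omega_h^k(x,y)=\int_{A_h(x)\times A_h(y)}k$, rewrites the discrete form as $\int g_h$ for the piecewise-constant function $g_h(s,t)=(f_h(x_h)-f_h(y_h))^2 k(s,t)$ (times the appropriate indicator), and then passes to the limit using Lebesgue differentiation (\autoref{lem:LebesgueDiff}) together with dominated convergence on the right and Fatou on the left; no error terms are split off. Your three-term split instead produces two variance errors which, after the $h^{2d}$ rescaling, are of size comparable to $h^{-\alpha}\|f_h-f\|^2_{L^2(B^\ast)}$, so you need a rate $\|f_h-f\|^2_{L^2}=O(h^{\alpha})$. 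Such a rate does follow from $|f|_{H^{\alpha/2}(B^\ast)}<\infty$ via the standard estimate $\|f-f_h\|^2_{L^2}\lesssim h^{\alpha}|f|^2_{H^{\alpha/2}}$, but that finiteness is precisely what the lemma sets out to prove, so it cannot be assumed. The alternative you sketch --- absorbing the residual into $|f|^2_{H_k(B)}$ using the cone lower bound on $k$ --- would amount to a fractional Poincar\'e inequality on cubes for the cone-restricted seminorm; this is essentially the continuous chaining argument of \autoref{sec:prelude} in disguise and would have to be established separately. The cleaner path is the paper's reformulation, which avoids extracting any error term at all.
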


\begin{proof} In view 
of~\eqref{eq:one-direction}, we only have to show the inclusion 
$H_k(\TheBall) \subset H^{\frac{\alpha}{2}}(\TheBall)$. Note that comparability 
of the 
seminorms  implies comparability of norms. Hence, we shall show
  \begin{align}\label{eq:key}
    |f|_{H_k(\TheBall)} \,\,\asymp\,\, |f|_{H^{\frac{\alpha}{2}}(\TheBall)} 
    \quad\text{for all\ } f \in H_k(B).
  \end{align}
  Let $R>0$, $x_0\in \R^d$ and $\kappa$ as in \autoref{cor:main_discrete}. In 
the sequel we use the notation $B=B_R(x_0)$ and $B^\ast = B_{\kappa R}(x_0)$.
Let $f \in H_k(B^\ast)$. For $h\in (0,1)$ we consider the following piecewise 
constant approximation of $f$. We define for $x \in h\Z^d\cap B^\ast$ 
\[
  f_h(x) \,\,=\,\,
  h^{-d}\int_{A_{h}(x)\cap B^\ast} f(s)\, \d s\,.
\]
Because of \autoref{prop:important}, 
  there is a constant $C>0$ and a configuration $\Gamma_h$ with $\vartheta'>0$ 
such that for all 
  $x,y\in 
  h\Z^d$ with $|x-y|> \sqrt{d}h$ the inequalities
  \[
    C^{-1} \left(\mathbbm{1}_{V^{\Gamma_h} [x]}(y) + 
    \mathbbm{1}_{V^{\Gamma_h} [y]}(x)\right) |x-y|^{-d-\alpha}
    \,\,\leq\,\,
    \omega_h^k(x,y)
    \,\,\leq\,\,
    C|x-y|^{-d-\alpha}
  \]
  hold.
  Thus $\omega=\omega_h^k$ together with $\Gamma = \Gamma_h$ fulfill 
  \eqref{assum:main_discrete cor} for  $R_0=\sqrt{d}$ and $\Lambda=C$. 
  \autoref{cor:main_discrete} implies the existence of $c>0$, independent of 
$f,R,\alpha$ and $h$, so that 
  \begin{align*} 
  c \sum\limits_{{\underset{\DistOf{x}{y}> \sqrt{d} \TheScale}{x,y\in B \cap 
  h\Z^d}}} 
  (f_h(x)-f_h(y))^2 |x-y|^{-d-\alpha}
  \,\,\leq\,\,
  \sum\limits_{\underset{\DistOf{x}{y}>
  \sqrt{d}\TheScale}{x,y\in B^\ast \cap h\Z^d}}(f_h(x)-f_h(y))^2 
  \omega(x,y).
  \end{align*}
  Using \autoref{lem:cubes} we obtain
  \begin{align}\label{discret}
  c &\sum\limits_{{\underset{\DistOf{x}{y}> \sqrt{d}\TheScale}{x,y\in B 
\cap 
  h\Z^d}}}(f_h(x)-f_h(y))^2\int_{A_h(x)\times A_h(y)}|s-t|^{-d-\alpha}\, 
\d(s,t) \nonumber \\
  \leq\,\, &\sum\limits_{\underset{\DistOf{x}{y}> \sqrt{d}\TheScale}{x,y\in 
B^\ast 
  \cap h\Z^d}} (f_h(x)-f_h(y))^2 \int_{A_h(x)\times 
  A_h(y)}k(s,t) \, \mathrm{d}(s,t), 
  \end{align}
  for a constant $c>0$ that differs from the one above by a factor only 
depending on the dimension $d$. 

For technical reasons, we  need the property that every $x$ in $\R^d$ is 
contained in some cube. Therefore we consider 
half-closed cubes. Given $x=(x_1,...,x_d) \in \R^d$ and 
$h\in (0,1)$, we use the notation
\[ \widetilde{A}_h(x) \,\,=\,\, 
\prod_{i=1}^d\Big[x_i-\frac{h}{2},x_i+\frac{h}{2}\Big).\]
For $h\in (0,1)$, we define a function $g_h: \R^d \times \R^d \to \R$ via
\[
  g_h(s,t) \,\,=\,\,\sum_{x,y \in 
h\Z^d} 
  \left[(f_{h}(x)-f_{h}(y))^2 k(s,t)\, \mathbbm{1}_{\widetilde{A}_{h}(x)\times 
\widetilde{A}_{h}(y)}(s,t) \,
\mathbbm{1}_{\{x,y\in B^\ast |  \sqrt{d}h < |x-y|\}}(x,y)\right]
\]
  and claim that $g_h$ converges for $h\to 0$ almost everywhere
  to the function $g:\R^d \times \R^d \to \R$ with
\[   g(s,t) \,\,=\,\, (f(s)- f(t))^2k(s,t)\mathbbm{1}_{B^\ast \times B^\ast}(s,t).\]
  Indeed, $g_h(s,t)=(f_{h}(x_h)-f_{h}(y_h))^2k(s,t)$ for appropriate 
points $x_h$ and $y_h$. We conclude with help of \autoref{lem:LebesgueDiff} 
$g_h(s,t)\to g(s,t)$ for almost 
every $(s,t)\in B^\ast \times B^\ast$.
In the same way we can show that the function $\widetilde{g}_h:\R^d \times \R^d 
\to \R$ with  
  \begin{align*}
  \widetilde{g}_h(s,t) \,\,=\,\,\sum_{x,y \in 
h\Z^d} &\Big[
  (f_{h}(x)-f_{h}(y))^2 
|s-t|^{-d-\alpha} \, \mathbbm{1}_{\widetilde{A}_{h}(x)\times 
\widetilde{A}_{h}(y)}(s,t)\\ &\times 
\mathbbm{1}_{\{x,y\in B | \sqrt{d}h_n< |x-y|\}}(x,y)\Big]
  \end{align*}
 converges for $h\to 0$ pointwise a.e. to 
   \begin{align*}
   &\widetilde{g}:\R^d \times \R^d \to \R, \\
   &\widetilde{g}(s,t) \,\,=\,\, (f(s)- 
f(t))^2|s-t|^{-d-\alpha}\mathbbm{1}_{B\times B}(s,t).
  \end{align*}

  For the right hand side in \eqref{discret} this implies with help of 
dominated convergence
  \begin{align*}
  \sum\limits_{{\underset{\DistOf{x}{y}> \sqrt{d}\TheScale}{x,y\in B^\ast
  \cap h\Z^d}}}  & (f_h(x)-f_h(y))^2 \int_{\widetilde{A}_h(x)\times 
  \widetilde{A}_h(y)}k(s,t) \, \mathrm{d}(s,t) \\
   = &\int_{\R^d \times \R^d} g_h(s,t) \dst
   \,\,\,
  \overset{h \to 0 }{\longrightarrow}
  \,\,\,
  \int_{\R^d \times \R^d} 
  g(s,t) \dst  
  \end{align*}
  With regard to the left hand side of 
\eqref{discret}, note that the Fatou lemma implies  
    \begin{align*}
  \liminf\limits_{h \to 0} & \sum\limits_{{\underset{\DistOf{x}{y}> 
\sqrt{d}\TheScale}{x,y\in B\cap h\Z^d}}} (f_h(x)-f_h(y))^2 
\int_{\widetilde{A}_h(x)\times 
  \widetilde{A}_h(y)}|s-t|^{-d-\alpha} \, \mathrm{d}(s,t) \\
  =\,\, & \liminf_{h \to 0}\int_{\R^d \times \R^d} \widetilde{g}_h(s,t) \, \d(s,t)
  \,\,\geq\,\,\int_{\R^d\times \R^d}\widetilde{g}(s,t)\d (s,t).
  \end{align*}
  In conclusion, we have shown that the discrete inequality \eqref{discret} 
yields the continuous version
  \begin{align*}
   c |f|_{H^{\frac{\alpha}{2}}(\TheBall)} \,\,\leq\,\, |f|_{H_k(\TheBall^\ast)} 
\quad\text{for all }f\in H_k(\TheBall^\ast).
  \end{align*}
  This is true for every ball $B$, since $c$ is independent of $B$. 
  Therefore, using \autoref{lem:new6.9}, we conclude for each ball $B\subset 
\R^d$ and each $f\in H_k(\TheBall^\ast)$
  \begin{align*}
  c^\ast |f|_{H^{\frac{\alpha}{2}}(\TheBall)} \,\,\leq\,\, |f|_{H_k(\TheBall)} 
  \end{align*}
  for some $c^\ast>0$, independent of the ball $B$. 
  This proves comparability of the seminorms and $H_k(B)\subset 
H^{\tfrac{\alpha}{2}}(B)$.
\end{proof}

\begin{proof}[Proof of \autoref{cor:H-are-equal}]
The comparability constant $c$ in \autoref{lem:H-are-equal-on-balls} is 
independent of the radius $R$ of the 
respective ball. Thus the result for the whole space is obtained in the 
limit $R\to \infty$ using monotone convergence.

Now let $\Omega$ be a bounded Lipschitz domain. In view of 
\autoref{lem:H-are-equal-on-balls} and \autoref{lem:new6.9} we conclude
\[ |\cdot |_{H_k(\Omega)} \,\,\leq\,\, c |\cdot |_{H^{\frac{\alpha}{2}}(\Omega)} \text{ 
on } H_k(\Omega)\]
for a constant $c\geq 1$,
which leads to $H_k(\Omega)\subset H^{\frac{\alpha}{2}}(\Omega)$. Since the 
inclusion $H^{\frac{\alpha}{2}}(\Omega)\subset H_k(\Omega)$ is obvious by the 
definition of $k$, one obtains $H_k(\Omega)=H^{\frac{\alpha}{2}}(\Omega)$.
For the assertions concerning density of smooth functions, note 
that $C^\infty(\overline{\Omega})$ is a dense 
subset of $H^\frac{\alpha}{2}(\Omega)$, cf. \cite[Proposition 4.52]{DeDe12}. 
Furthermore $C_c^\infty(\R^d)$ is a dense subset of $H^\frac{\alpha}{2}(\R^d)$, 
cf. \cite[Proposition 4.27]{DeDe12}.
\end{proof}

\section{A continuous prelude}\label{sec:prelude}
The main tool for the proof of \autoref{theo:main_discrete} is the 
construction of paths connecting two arbitrary points in $\TheLattice$. In 
this section we show the existence of paths connecting two arbitrary points in 
$\TheSpace$. This result is not needed to prove \autoref{theo:main_discrete}. 
Therefore, the reader may skip this section. However, the 
procedure in the continuous setting is similar to the discrete setting, but 
less 
technical. For this reason, reading this section first might provide useful
intuition.
  
  From a configuration
  \(
    \TheConf\mapcolon\TheSpace\rightarrow\SetDoubleCones
  \)
  we construct a directed graph $\DirGraph$ as follows: the vertex
  set is $\TheSpace$ and there is a directed edge from $\ThePoint$
  to $\AltPoint$ if $\AltPoint\in\BasedCone{\ThePoint}$. Note that there are no 
loops in $\DirGraph$
  as $\TheConfOf{\ThePoint}$ is open and does not contain the tip.
  
  We shall be concerned with the question whether $\DirGraph$ is
  connected as an \emph{undirected} graph if the underlying configuration is 
$\TheApex$-bounded. In this case, \autoref{cor:ref-config} allows 
us to assume 
WLOG that the image of $\TheConf$ contains only a finite number of elements. 
Thus, crucial parts of the
  argument can be proved by induction on the number of cones in 
$\TheConf(\TheSpace)$.
  As it often happens, one needs to strike the right balance and the
  statement suitable for induction is a little bit stronger (and
  more technical) than the primary target. We are led to consider
  subgraphs $\RestrGraph$ defined by open subsets
  $\TheOpenSet\subset\TheSpace$ as follows: the vertex set of
  $\RestrGraph$ is still $\TheSpace$ and the rule for oriented
  edges is the same, however, we only put in the edges issuing
  from vertices in $\TheOpenSet$. Note that vertices outside $\TheOpenSet$
  still can be used in edge paths since we are interested in
  undirected connectivity.
  
  In this section we always assume that the configuration $\TheConf$ is 
$\TheApex$-bounded.
  
  The main result of this part is:
  \begin{theorem}\label{cont.connectivity}
    For any connected open set $\TheOpenSet\subset\TheSpace$, any two points
    $\ThePoint,\AltPoint\in\TheOpenSet$ are vertices in the same connected
    component of $\RestrGraph$.
  \end{theorem}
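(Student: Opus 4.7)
The plan is to show that the equivalence classes on $\TheOpenSet$ under ``same connected component of $\RestrGraph$'' are open subsets of $\TheOpenSet$. Since $\TheOpenSet$ is connected, this forces a single class, which is the desired conclusion. By \autoref{cor:ref-config}, I may replace $\TheConf$ with a subconfiguration $\widetilde{\TheConf}$ of finite image $\{V^{1},\ldots,V^{L}\}$. As $\widetilde{\TheConf}(\ThePoint) \subseteq \TheConfOf{\ThePoint}$ pointwise, the graph built from $\widetilde{\TheConf}$ is a subgraph of $\DirGraph$, so it suffices to prove connectivity for $\widetilde{\TheConf}$. Thus I assume the image is finite of size $L$ and induct on $L$, aiming at the slightly stronger statement that for every open $\TheOpenSet$ (not necessarily connected) and every $\ThePoint \in \TheOpenSet$, the component of $\ThePoint$ in $\RestrGraph$ contains an open neighborhood of $\ThePoint$ in $\TheOpenSet$.

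The base case $L = 1$ is a direct two-edge construction. For $\ThePoint, \AltPoint \in \TheOpenSet$ and $V$ the unique cone with axis $v$, the auxiliary point $p = \ThePoint + tv$ for $t$ sufficiently large satisfies $p - \ThePoint = tv \in V$ while $p - \AltPoint = tv - (\AltPoint - \ThePoint)$ has direction arbitrarily close to $v$, hence also lies in $V$. Both edges $\{\ThePoint, p\}$ and $\{\AltPoint, p\}$ therefore belong to $\RestrGraph$ because $\ThePoint$ and $\AltPoint$ are sources in $\TheOpenSet$, so $\ThePoint$ and $\AltPoint$ share a component. The underlying algebraic fact is $V + V = \TheSpace$ for any double cone of positive apex angle, which one verifies by writing a target vector as a difference of two long axis translates.

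For the inductive step, assuming the claim for configurations of image size at most $L - 1$, I would fix $\ThePoint \in \TheOpenSet$ with $\TheConfOf{\ThePoint} = V^{i}$ and split according to the local behavior of $\TheConf$ near $\ThePoint$. If some open neighborhood $W \subseteq \TheOpenSet$ of $\ThePoint$ satisfies $\TheConf \equiv V^{i}$ on $W$, then the base-case argument applied inside $W$ (using only edges whose sources lie in $W$) places $W$ inside the component of $\ThePoint$. Otherwise, every neighborhood of $\ThePoint$ contains a point of $\TheOpenSet$ where $\TheConf$ takes a value different from $V^{i}$; by finiteness of the image there is a fixed $j \neq i$ and a sequence $\AltPoint_{n} \to \ThePoint$ in $\TheOpenSet$ with $\TheConfOf{\AltPoint_{n}} = V^{j}$. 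I would connect $\ThePoint$ to such an $\AltPoint_{n}$ via a short path using the cones $V^{i}$ and $V^{j}$ at the two endpoints, and then invoke the inductive hypothesis on an auxiliary configuration of strictly smaller image (obtained by suppressing one cone value outside a region where its role is inessential) to transport a full neighborhood of $\AltPoint_{n}$ into a neighborhood of $\ThePoint$ inside the component.

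I anticipate the second case of the inductive step to be the main obstacle. Since $\TheConf$ is only $\InfApex$-bounded and no regularity or measurability assumption is in force in this section, the preimage of each cone can be an arbitrary subset of $\TheSpace$, and in dimensions $\geq 3$ the Minkowski sum $V^{i} + V^{j}$ of two thin double cones need not equal $\TheSpace$. Consequently a naive two-edge ``pivot'' through a single auxiliary vertex outside $\TheOpenSet$ may fail to reach every direction, and additional intermediate vertices inside $\TheOpenSet$ with complementary cone values must be introduced. Producing such intermediates while genuinely decreasing the image size --- so that the inductive hypothesis applies --- is the delicate geometric and combinatorial point; it is here that both the positive apex angle guaranteed by $\InfApex$-boundedness and the finiteness of the image supplied by \autoref{cor:ref-config} are essential.
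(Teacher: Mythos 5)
Your framework matches the paper's exactly: reduce to a finite family of reference cones via \autoref{cor:ref-config}, induct on the number of cone types realized in $\TheOpenSet$, establish the base case by a two‑edge pivot through a point possibly outside $\TheOpenSet$, and formulate the inductive target as the openness of components (the paper phrases this via the density of well‑connected points, \autoref{lem:observations}). Your base case is correct, and your observation that for $\TheDim\ge 3$ the Minkowski sum of two thin double cones need not be all of $\TheSpace$ correctly identifies why a naive two‑edge pivot through a single auxiliary vertex cannot work for points of different types.

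However, the inductive step is where your proposal has a genuine gap, and your own closing paragraph acknowledges as much. Your plan is to ``suppress one cone value outside a region where its role is inessential'' and apply induction to the resulting configuration of smaller image, but you give no concrete recipe for choosing such a region, and it is not clear one exists in general: the preimages of cone types can be arbitrary sets, so there is no natural open region on which a given type is absent.

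The paper's resolution is a clean dichotomy that you are missing. Fix a well‑connected point $\ThePoint$ whose $\TheRadius$‑ball lies in $\TheOpenSet$, let $\AltPoint$ be a nearby target, and let $\TheDoubleCone=\TheConfOf{\AltPoint}$ be \emph{its} cone type. Consider the translate $\TheDoubleCone[\ThePoint]$, i.e.\ $\AltPoint$'s cone but based at $\ThePoint$ (note $\TheDoubleCone$ need not equal $\TheConfOf{\ThePoint}$). Either $\TheDoubleCone[\ThePoint]$ contains a point $\ThrPoint$ of type $\TheDoubleCone$, in which case \autoref{cont.ueber-bande} connects $\AltPoint$ to $\ThrPoint$ (same type, so a two‑edge path) and $\ThrPoint$ to $\ThePoint$ (since $\ThrPoint\in\TheDoubleCone[\ThePoint]$ forces $\ThePoint\in\BasedCone{\ThrPoint}$, giving a directed edge). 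Or, it does not, and then the cone type $\TheDoubleCone$ is simply not realized in the open set $\TheSmallOpenSet=\TheOpenSet\cap\TheDoubleCone[\ThePoint]$; this is exactly the drop in $\CardOf{\TheConfOf{\cdot}}$ needed to invoke the inductive hypothesis on $\TheSmallOpenSet$. A uniform geometric observation (two translates of the same double cone whose apices are close enough intersect near either apex) then shows that $\BasedCone{\AltPoint}$ meets $\TheSmallOpenSet$ inside $\BallOf[\TheRadius]{\ThePoint}$, so $\AltPoint$ is connected into $\TheSmallOpenSet$ and hence, by induction, to points arbitrarily close to $\ThePoint$; well‑connectedness of $\ThePoint$ finishes the step. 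The crucial point is that the set on which the induction is applied is \emph{produced automatically} by the negation of the first case, rather than engineered by modifying the configuration — that is the idea your proposal is reaching for but does not supply.
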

  For the proof of \autoref{cont.connectivity} we need some auxiliary results.
  \begin{definition}
   A point $\ThePoint \in \TheSpace$ is \emph{of type} $\TheDoubleCone$ if 
$\TheDoubleCone=\TheConfOf{\ThePoint}$. Two points $\ThePoint, \AltPoint \in 
\TheSpace$ \emph{have the same type} if 
$\TheConfOf{\ThePoint}=\TheConfOf{\AltPoint}$. 
  \end{definition}

  \begin{lemma}\label{cont.connect-two-of-same-type}
    If two points $\ThePoint,\AltPoint\in\TheOpenSet$ have the same type, then 
there is
    an edge path in $\RestrGraph$ of length at most two connecting them.
  \end{lemma}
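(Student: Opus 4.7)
Write $V := \TheConfOf{\ThePoint} = \TheConfOf{\AltPoint}$; let $v \in \S^{d-1}$ denote its symmetry axis and $\vartheta$ its apex angle. The strategy is to exhibit a single intermediate point $\ThrPoint \in \TheSpace$ satisfying $\ThrPoint - \ThePoint \in V$ and $\ThrPoint - \AltPoint \in V$. Once such a $\ThrPoint$ is found, the directed edges $\ThePoint \to \ThrPoint$ and $\AltPoint \to \ThrPoint$ both lie in $\DirGraph$, and since each of them issues from a vertex in $\TheOpenSet$ (namely $\ThePoint$ respectively $\AltPoint$), they are edges of $\RestrGraph$. Their concatenation is an undirected walk of length at most two from $\ThePoint$ to $\AltPoint$ in $\RestrGraph$.

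Finding $\ThrPoint$ amounts to showing $\AltPoint - \ThePoint \in V - V$. The key geometric fact is that for any double cone $V$ with positive apex angle one has $V - V = \TheSpace$. Indeed, given an arbitrary $w \in \TheSpace$, the two vectors
\begin{equation*}
  p := tv + \tfrac{1}{2}w, \qquad q := tv - \tfrac{1}{2}w
\end{equation*}
satisfy $p - q = w$, and for sufficiently large $t > 0$ the unit vectors $p/|p|$ and $q/|q|$ lie within angle $\vartheta$ of $v$, so that $p, q \in \widetilde{V} \subset V$. Applying this with $w = \AltPoint - \ThePoint$ and setting
\begin{equation*}
  \ThrPoint := \ThePoint + p = \tfrac{1}{2}(\ThePoint + \AltPoint) + tv
\end{equation*}
gives $\ThrPoint - \ThePoint = p \in V$ and $\ThrPoint - \AltPoint = q \in V$, as required.

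Two small sanity checks remain. First, $V$ does not contain the apex $0$, so we must ensure $\ThrPoint \neq \ThePoint$ and $\ThrPoint \neq \AltPoint$ in order that the produced edges really exist; this is automatic once $t$ is large enough, since $|\ThrPoint| \to \infty$ with $t$. Secondly, if by chance $p = \AltPoint - \ThePoint$ (so that $\ThrPoint = \AltPoint$), then the walk collapses to the direct edge $\ThePoint \to \AltPoint$, still of length at most two. I do not expect any substantive obstacle here: the whole statement reduces to the elementary geometric observation that two translates of a common double cone always have a common ``far tip'' sitting deep in both cones, and $\TheApex$-boundedness is not even needed at this stage because we are working with a single fixed cone $V$.
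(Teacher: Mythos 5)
Your proof is correct and takes essentially the same approach as the paper: pick a point in $V[\ThePoint] \cap V[\AltPoint]$ and use it as the middle vertex of a two-edge path. The paper merely asserts that this intersection is nonempty, whereas you supply the explicit construction $\ThrPoint = \tfrac12(\ThePoint+\AltPoint) + t v$ for large $t$; both arguments are sound.
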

  \begin{proof}
    Let $\TheDoubleCone=\TheConfOf{\ThePoint}=\TheConfOf{\AltPoint}$.
    Then the translated double cones $\TheDoubleCone[\ThePoint]$ and
    $\TheDoubleCone[\AltPoint]$ intersect. We pick a point of
    intersection (it may lie outside of $\TheOpenSet$). It has an edge
    incoming from $\ThePoint$ and another edge incoming
    from $\AltPoint$. These two edges form the desired edge path.
  \end{proof}

  \begin{definition} We call $\ThePoint$ \notion{well-connected in 
$\TheOpenSet$} if there
  is an open neighborhood $\TheOpenNbhd$
  of $\ThePoint$ that, considered as a set of vertices in $\RestrGraph$,
  lies entirely in a single connected component of $\RestrGraph$. I.e., the 
point
  $\ThePoint$ is connected by edge paths in $\RestrGraph$ to all points
  of an open neighborhood.
  \end{definition}

The following lemma lists inter alia some important features of well connected 
points.

\begin{lemma}\label{lem:observations}
The following hold.
\begin{enumerate} 
 \item \label{cont.well-connected}  
    For $\AltPoint\in\TheOpenSet$, any point
    $\ThePoint\in\TheOpenSet\intersect\BasedCone{\AltPoint}$
    is well-connected in $\TheOpenSet$.
 \item \label{cont.grow-open-set} 
    If $\TheSmallOpenSet\subset\TheOpenSet$ is an inclusion of open sets, then
    any point $\ThePoint\in\TheSmallOpenSet$ that is well-connected in 
$\TheSmallOpenSet$
    is also well-connected in $\TheOpenSet$.
 \item \label{cont.well-connected-are-dense}
    Any non-empty open set $\TheOpenSet$ contains a point
    that is well-connected in $\TheOpenSet$. In fact, the well-connected
    points are dense in $\TheOpenSet$.
\end{enumerate}
\end{lemma}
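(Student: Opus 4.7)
The plan is to prove the three parts in order, each being a direct consequence of the definitions together with the openness of the cones. For part~(1), given $y\in U$ and $x\in U\cap V^\Gamma[y]$, I would produce the witnessing open neighborhood $W$ explicitly by setting $W := U\cap V^\Gamma[y]$. This set is open since the double cone $V^\Gamma[y]=y+\Gamma(y)$ is open, it contains $x$ by hypothesis, and for every $z\in W$ one has $z-y\in\Gamma(y)$, giving a directed edge $y\to z$ in $G$. Because $y\in U$, this edge is present in the subgraph $G[U]$, so every $z\in W$ lies in the same connected component of $G[U]$ as $y$, and therefore as $x$.

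Part~(2) is a pure monotonicity observation: if $U'\subset U$, then every edge of $G[U']$ has its tail in $U'$ and hence in $U$, so it is also an edge of $G[U]$. Consequently every undirected edge path in $G[U']$ is an edge path in $G[U]$, and any open neighborhood of $x$ witnessing well-connectedness in $U'$ continues to witness well-connectedness in $U$. For part~(3) I plan to reduce density to part~(1). Given any non-empty open subset $U^{\ast}\subset U$, pick $y\in U^{\ast}$. The double cone $V^\Gamma[y]$ is non-empty and open with apex $y$, so $y$ lies in its closure and every open neighborhood of $y$ meets $V^\Gamma[y]$. In particular $U^{\ast}\cap V^\Gamma[y]\neq\emptyset$, and any point $x$ drawn from this intersection satisfies the hypothesis of part~(1) (since $U^{\ast}\subset U$ forces $x\in U\cap V^\Gamma[y]$), so $x\in U^{\ast}$ is well-connected in $U$.

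I do not anticipate a genuine obstacle here. The only non-formal input beyond the definitions is the fact that an open double cone has its apex as a limit point, which is immediate from \autoref{def:start}, together with the monotonicity of the restricted-graph construction under inclusion of the open set. The whole lemma is really a bookkeeping step packaging the local topology needed for the covering and induction arguments that will establish \autoref{cont.connectivity}.
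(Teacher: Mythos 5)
Your argument is correct and follows essentially the same route as the paper: part (1) is established with the same witnessing neighborhood $U\cap V^\Gamma[y]$ and the observation that $y$ serves as a common middle vertex, part (2) is the same edge-monotonicity observation, and part (3) reduces density to existence applied inside a smaller open set. The only (minor) difference is in (3): the paper obtains a point well-connected in $U'$ and then invokes (2) to upgrade this to well-connectedness in $U$, whereas you apply (1) directly in $U$ to a point of $U'\cap V^\Gamma[y]$, bypassing the appeal to (2); both are valid and equally short.
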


\begin{proof}
  For (\ref{cont.well-connected}) we may choose
    $\TheOpenSet\intersect\BasedCone{\AltPoint}$ as the
    open neighborhood. Any two points therein are connected via an
    edge path of length two with $\AltPoint$ as the middle vertex. Therefore 
(\ref{cont.well-connected}) follows.

     Enlarging the open set $\TheSmallOpenSet$ only adds edges to the graph.
    Hence connectivity can only improve. This proves (\ref{cont.grow-open-set}).

    For the proof of (\ref{cont.well-connected-are-dense}) notice that
    existence of a well-connected point follows from 
(\ref{cont.well-connected}).
    Applying the existence statement to smaller open sets
    $\TheSmallOpenSet\subset\TheOpenSet$, density follows in view of
    (\ref{cont.grow-open-set}).
\end{proof}

\begin{lemma}\label{cont.ueber-bande}
    Consider two points $\ThePoint,\AltPoint\in\TheOpenSet$ and let
    $\TheDoubleCone=\TheConfOf{\AltPoint}$ be the cone type of $\AltPoint$.
    Assume that the translated double cone $\TheDoubleCone[\ThePoint]$
    contains a point $\ThrPoint$ of type $\TheDoubleCone$. Then $\ThePoint$
    and $\AltPoint$ are connected.
  \end{lemma}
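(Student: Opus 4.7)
The plan is to use $\ThrPoint$ as a pivot between $\ThePoint$ and $\AltPoint$: first establish a single edge from $\ThrPoint$ to $\ThePoint$, and then invoke \autoref{cont.connect-two-of-same-type} to join $\AltPoint$ and $\ThrPoint$.

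For the first leg I would exploit that $\TheDoubleCone$, being a double cone, is invariant under negation. The hypothesis $\ThrPoint \in \TheDoubleCone[\ThePoint]$ reads $\ThrPoint-\ThePoint\in\TheDoubleCone$; symmetry then yields $\ThePoint-\ThrPoint\in\TheDoubleCone=\TheConfOf{\ThrPoint}$, i.e., $\ThePoint \in \ThrPoint + \TheConfOf{\ThrPoint} = \BasedCone{\ThrPoint}$. Therefore $\DirGraph$ carries the directed edge $\ThrPoint\to\ThePoint$, and since $\ThrPoint\in\TheOpenSet$ this edge lies in $\RestrGraph$.

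For the second leg, $\AltPoint$ and $\ThrPoint$ both lie in $\TheOpenSet$ and share type $\TheDoubleCone$, so \autoref{cont.connect-two-of-same-type} supplies an edge path of length at most two in $\RestrGraph$ connecting them.

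Splicing the two legs yields an edge path of length at most three in $\RestrGraph$ from $\ThePoint$ to $\AltPoint$, which is the desired conclusion. The only conceptual step is the use of double-cone symmetry in the first leg: it is precisely what turns the hypothesis, formulated as "$\ThrPoint$ lies in the shifted cone centred at $\ThePoint$", into a usable \emph{outgoing} edge at $\ThrPoint$ pointing back to $\ThePoint$, which is the crucial bridge since we have no direct information about $\TheConfOf{\ThePoint}$.
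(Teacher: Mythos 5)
Your proof takes essentially the same route as the paper's: use $\ThrPoint$ as a pivot, getting the single edge $\ThrPoint\to\ThePoint$ from the symmetry of the double cone, and joining $\ThrPoint$ to $\AltPoint$ via \autoref{cont.connect-two-of-same-type}; you merely spell out the negation-invariance step that the paper leaves implicit.

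One caveat worth recording: you write ``since $\ThrPoint\in\TheOpenSet$'' and also invoke \autoref{cont.connect-two-of-same-type} for $\AltPoint$ and $\ThrPoint$, but the hypothesis $\ThrPoint\in\TheOpenSet$ does not appear in the lemma's statement. It is needed twice --- for the directed edge $\ThrPoint\to\ThePoint$ to lie in $\RestrGraph$ (edges are only put in from vertices inside $\TheOpenSet$), and for \autoref{cont.connect-two-of-same-type} to apply, since that lemma requires both endpoints in $\TheOpenSet$. The paper's own proof makes exactly the same unspoken assumption, and in the only place the lemma is used, inside the proof of \autoref{cont.connectivity}, the point $\ThrPoint$ can indeed be taken in $\TheOpenSet$ (otherwise one falls into the ``otherwise'' branch there). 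So your argument matches the intended reading; it would just be cleaner to state $\ThrPoint\in\TheOpenSet$ as an added hypothesis of the lemma rather than assert it mid-proof.
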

  Note that we do not assume that
  $\TheDoubleCone=\TheConfOf{\ThePoint}$.
  One may also note that in the situation of the lemma, the point $\ThePoint$
  is well-connected in $\TheOpenSet$.

  \begin{proof}
    Since $\AltPoint$ and $\ThrPoint$ have the same type, they are connected
    by an edge path of length at most two. Now,
    \(
      \ThrPoint\in\TheDoubleCone[\ThePoint]
    \)
    implies
    \(
      \ThePoint\in\TheDoubleCone[\ThrPoint]=\BasedCone{\ThrPoint}
    \).
    Hence, there is an edge from $\ThrPoint$ to $\ThePoint$.
  \end{proof}

  \begin{proof}[Proof of~\autoref{cont.connectivity}]
    According to \autoref{lem:ref-cones}, we may assume that the 
image of $\TheConf$ has at most $L$ different elements since $\TheConf$ is 
$\TheApex$-bounded.
    Therefore, we can use induction on the number 
$\CardOf{\TheConfOf{\TheOpenSet}}$ 
of cones
    realized in $\TheOpenSet$. If there is only a single cone type throughout
    $\TheOpenSet$, any two points $\ThePoint,\AltPoint\in\TheOpenSet$ are
    connected in $\RestrGraph$ by an edge path of length at most two.
    This settles the base of the induction.

    For $\CardOf{\TheConfOf{\TheOpenSet}}>1$, we start with the following
    observation:
    \begin{quote}
      There is a constant $\TheFactor>0$ depending only on the minimum
      apex angle $\TheApex$ such that for any double cone
      $\TheDoubleCone\in\SetDoubleCones$
      and any two points $\ThePoint,\AltPoint\in\TheSpace$ of distance
      $\NormOf{\ThePoint-\AltPoint}<\TheFactor$, the intersection
      \(
        \TheDoubleCone[\ThePoint]
        \intersect
        \TheDoubleCone[\AltPoint]
      \)
      contains a point in $\BallOf[1]{\ThePoint}$.      
    \end{quote}
    Now assume that $\ThePoint$ is well-connected in $\TheOpenSet$ and
    that the $\TheRadius$-ball $\BallOf[\TheRadius]{\ThePoint}$ lies
    entirely in $\TheOpenSet$. We claim that $\ThePoint$ is connected
    to any point $\AltPoint\in\BallOf[\TheFactor\TheRadius]{\ThePoint}$. 
Indeed, consider the cone type $\TheDoubleCone=\TheConfOf{\AltPoint}$
    of $\AltPoint$. If $\TheDoubleCone[\ThePoint]$ contains a point of
    type $\TheDoubleCone$, the points $\ThePoint$ and $\AltPoint$ are
    connected by \autoref{cont.ueber-bande}.

    Otherwise, within the open set
    \(
      \TheSmallOpenSet=\TheOpenSet\intersect\TheDoubleCone[\ThePoint]
      \neq \emptyset
    \)
    the cone type $\TheDoubleCone$ is not realized. We infer by induction
    that all points in $\TheSmallOpenSet$ are mutually connected in
    $\SmallRestrGraph$ and hence in $\RestrGraph$. However,
    $\TheDoubleCone[\AltPoint]=\BasedCone{\AltPoint}$ intersects
    \(
      \TheSmallOpenSet
      \supset
      \BallOf[\TheRadius]{\ThePoint}\intersect\TheDoubleCone[\ThePoint]
    \)
    by the opening observation. Hence $\AltPoint$ is
    connected to a point in $\TheSmallOpenSet$ and therefore to any point
    in $\TheSmallOpenSet$, which contains points arbitrarily close to
    $\ThePoint$. Since $\ThePoint$ is well-connected in $\TheOpenSet$,
    the points $\AltPoint$ and $\ThePoint$ are connected in
    $\RestrGraph$.

    It follows that a well-connected point $\ThePoint\in\TheOpenSet$
    whose $\TheRadius$-neighborhood lies in $\TheOpenSet$ is actually
    connected to any point in its $\TheFactor\TheRadius$-neighborhood.
    Now, density of well-connected points in $\TheOpenSet$ (cf. 
\autoref{lem:observations} (\ref{cont.well-connected-are-dense})) implies that
    $\TheOpenSet$ is covered by overlapping open well-connected subsets.
  \end{proof}

\section{Chaining and renormalization}\label{sec:proof_discrete} In 
this section we provide the chaining argument that leads to the proof of 
\autoref{theo:main_discrete} in \autoref{sec:proof_discrete}. The main result 
of this section is \autoref{theo:path-props}.

Every configuration
  \(
    \TheConf\mapcolon\TheSpace\rightarrow\SetDoubleCones 
  \)
 induces naturally a mapping $\TheConf|_{\TheLattice}$ which we again call 
configuration and denote by $\TheConf$.
 As in the continuous case, a configuration $\TheConf$ defines a directed graph 
$\DirGraph=\DirGraph(\TheConf)$ where the vertex set is given by $\TheLattice$ 
and there is an oriented edge from $\ThePoint$ to $\AltPoint$ if $y \in 
\BasedCone{\ThePoint}$.
   
  We shall be concerned with the question whether $\DirGraph$ is
  connected as an \emph{undirected} graph if $\TheConf$ is 
$\TheApex$-bounded. Therefore, throughout this section we assume without 
further notice that $\TheConf$ is a $\TheApex$-bounded configuration. In 
addition to the continuous version, however, we also want to keep track
  of how far such an edge path might take us away from the
  end points in question.
  \subsection{Auxiliary results}
  A technicality is that we always
  have to use lattice points. Since any closed ball of radius
  $\frac{\SqrtOf{\TheDim}}{2}$ contains a lattice point, we have:
  
  \begin{lemma}\label{lem:discrObservations}
  Let $\TheCone$ be a cone of apex angle at least $\TheApex$. 
  \begin{enumerate}
   \item\label{discr.ensure-lattice-point}
	  Fix $\SmallRadius > 0$ and assume
	  \(
	   \LargeRadius > \frac{\SmallRadius+\SqrtOf{\TheDim}}{\SinOf{\TheApex}}
	  \).
	  Then
	  \(
	    \BallOf[\LargeRadius]{\ThePoint}
	    \intersect
	    \TheCone[\ThePoint]
	  \)
	  contains a lattice point $\AltPoint\in\TheLattice$ with
	  $\BallOf[\SmallRadius]{\AltPoint}\subset \TheCone[\ThePoint]$.
    \item\label{discr.lattice-point-in-intersection}
	  Let $\ThePoint,\AltPoint\in\TheSpace$.
	  Fix $\SmallRadius > \NormOf{\ThePoint-\AltPoint}$ and
	  \(
	    \LargeRadius > 
\frac{\SmallRadius+\SqrtOf{\TheDim}}{\SinOf{\TheApex}} + \SmallRadius
	  \).
	  Then the intersection
	  \[
	    \BallOf[\LargeRadius]{\ThePoint}
	    \,\intersect\,
	    \TheCone[\ThePoint]
	    \,\intersect\,
	    \BallOf[\LargeRadius]{\AltPoint}
	    \,\intersect\,
	    \TheCone[\AltPoint]
	  \]
    contains a lattice point.
  \end{enumerate}
  \end{lemma}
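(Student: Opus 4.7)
For part~(\ref{discr.ensure-lattice-point}), the plan is to locate a closed ball of radius $\sqrt{d}/2$ sitting entirely inside $B_R(x) \cap \widetilde{V}_r[x]$, where
\[
  \widetilde{V}_r[x] \,\,=\,\, \{\,w \in \RRR^d \,\,|\,\, \overline{B_r(w)} \subset \widetilde{V}[x]\,\}
\]
denotes the inner $r$-core of the translated cone. Such a ball automatically contains a lattice point, since $\sqrt{d}/2$ is precisely the covering radius of $\ZZZ^d$ in the Euclidean norm. Geometrically, $\widetilde{V}_r[x]$ is again a cone of apex angle $\vartheta$, obtained from $\widetilde{V}[x]$ by shifting the apex along the symmetry axis by $r/\sin\vartheta$. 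A ball of radius $\rho$ inscribes in this shrunken cone as soon as its center lies at axial distance $\rho/\sin\vartheta$ from the shifted apex. Choosing $\rho=\sqrt{d}/2$, the farthest point of this inscribed ball from $x$ has distance at most
\[
  \frac{r}{\sin\vartheta} + \frac{\sqrt{d}}{2\sin\vartheta} + \frac{\sqrt{d}}{2}
  \,\,\leq\,\,
  \frac{r+\sqrt{d}}{\sin\vartheta}
  \,\,<\,\, R,
\]
using $\sin\vartheta \leq 1$, which matches the hypothesis of~(\ref{discr.ensure-lattice-point}) exactly.

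For part~(\ref{discr.lattice-point-in-intersection}), the plan is to reduce to~(\ref{discr.ensure-lattice-point}) via the inclusion $\widetilde{V}_r[y] \subset \widetilde{V}[x]$, which holds whenever $\|x-y\|<r$. Indeed, $z \in \widetilde{V}_r[y]$ means $\overline{B_r(z-y)} \subset \widetilde{V}$, and since $-(x-y) \in B_r(0)$, the point $z-x=(z-y)-(x-y)$ lies in $\widetilde{V}$. Setting $R':=R-r$, the hypothesis on $R$ gives $R'>(r+\sqrt{d})/\sin\vartheta$, so part~(\ref{discr.ensure-lattice-point}) applied to $\widetilde{V}$ based at $y$ yields a lattice point $z \in B_{R'}(y) \cap \widetilde{V}_r[y]$. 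The inclusion above then places $z$ in $\widetilde{V}[x]$; the triangle inequality gives $\|z-x\| < R'+r = R$, so $z \in B_R(x)$; and $z \in B_R(y) \cap \widetilde{V}[y]$ is automatic.

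The only genuinely delicate step is the radius calibration in~(\ref{discr.ensure-lattice-point}); once the geometry of the shrunken cone $\widetilde{V}_r[x]$ is laid out and the covering radius of $\ZZZ^d$ is invoked, the estimates are immediate, and~(\ref{discr.lattice-point-in-intersection}) follows by the translation argument just described.
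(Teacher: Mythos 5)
Your proof is correct and follows essentially the same route as the paper: part (1) places a ball of radius $\SqrtOf{\TheDim}/2$ (covering radius of $\TheLattice$) deep enough in the cone that the surrounding $\SmallRadius$-ball stays inside, with the same radius bookkeeping, and part (2) transfers the lattice point across the translation of length less than $\SmallRadius$ plus a triangle inequality. The only cosmetic difference is that you apply part (1) at $\AltPoint$ and transfer to $\ThePoint$, while the paper does the reverse; this changes nothing.
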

  \begin{proof}
  Within distance $\frac{\SmallRadius+\SqrtOf{\TheDim}/2}{\SinOf{\TheApex}}$
    of $\ThePoint$, we find a point $\ThrPoint$ with
    \(
      \BallOf[\SmallRadius+\SqrtOf{\TheDim}/2]{\ThrPoint}\subset
      \TheCone[\ThePoint]
    \).
    Within the closed ball of radius $\SqrtOf{\TheDim}/2$ around $\ThrPoint$,
    we find the desired lattice point $\AltPoint$. The second assertion can be 
seen as another way of looking at the same 
phenomenon. 
    By (\ref{discr.ensure-lattice-point}), there is a lattice point
    \(
      \ThrPoint\in\TheLattice
    \)
    with
    \(
      \BallOf[\SmallRadius]{\ThrPoint}\subset
    \BallOf[{\frac{\SmallRadius+\SqrtOf{\TheDim}}{\SinOf{\TheApex}}}]{\ThePoint}
      \intersect
      \TheCone[\ThePoint]
    \).
    Now, $\ThrPoint\in\TheCone[\AltPoint]$ since
    $\TheCone[\AltPoint]$ is obtained from
    $\TheCone[\ThePoint]$ via translation by a distance
    less than $\SmallRadius$. By triangle inequality,
    $\ThrPoint\in\BallOf[\LargeRadius]{\AltPoint}$.
  \end{proof}

  A quantitative version of \autoref{cont.connect-two-of-same-type}
  follows immediately.
  \begin{corollary}\label{discr.connect-two-of-same-type}
    Any two lattice points $\ThePoint,\AltPoint\in\TheLattice$
    with $\TheConfOf{\ThePoint}=\TheConfOf{\AltPoint}=\TheDoubleCone$
    and of distance less than $\SmallRadius$
    are connected via a path of two edges of length less than
$\LargeRadius=\frac{\SmallRadius+\SqrtOf{\TheDim}}{\SinOf{\TheApex}}
+\SmallRadius$.
  \end{corollary}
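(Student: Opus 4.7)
The plan is to apply the preceding \autoref{lem:discrObservations}(\ref{discr.lattice-point-in-intersection}) essentially verbatim, using the common cone type as the intermediate structure, and to let the intersecting lattice point that it supplies serve as the middle vertex of the desired two-edge path.

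More concretely, I would start by fixing one of the two single cones that make up the double cone $\TheDoubleCone = \TheConfOf{\ThePoint} = \TheConfOf{\AltPoint}$; call it $\TheCone$. Since $\NormOf{\ThePoint - \AltPoint} < \SmallRadius$, the hypotheses of \autoref{lem:discrObservations}(\ref{discr.lattice-point-in-intersection}) hold with this $\SmallRadius$ and with $\LargeRadius = \frac{\SmallRadius + \SqrtOf{\TheDim}}{\SinOf{\TheApex}} + \SmallRadius$, so we obtain a lattice point
\[
  \ThrPoint \,\in\, \BallOf[\LargeRadius]{\ThePoint} \,\intersect\, \TheCone[\ThePoint] \,\intersect\, \BallOf[\LargeRadius]{\AltPoint} \,\intersect\, \TheCone[\AltPoint].
\]
Because $\TheCone \subset \TheDoubleCone = \TheConfOf{\ThePoint} = \TheConfOf{\AltPoint}$, this point $\ThrPoint$ lies in both $\BasedCone{\ThePoint}$ and $\BasedCone{\AltPoint}$, so the directed graph $\DirGraph$ contains the edges from $\ThePoint$ to $\ThrPoint$ and from $\AltPoint$ to $\ThrPoint$. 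Undirectedly, this yields an edge path $\ThePoint - \ThrPoint - \AltPoint$ of length two, with each edge of Euclidean length strictly less than $\LargeRadius$ thanks to the ball constraints.

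There is essentially no obstacle here: the only point requiring a moment's care is that the quoted lemma produces a lattice point in an intersection of \emph{single} cones, while the corollary is phrased in terms of the \emph{double} cone $\TheDoubleCone$. This is exactly why one picks a single half $\TheCone$ at the outset, so that the two translated halves $\TheCone[\ThePoint]$ and $\TheCone[\AltPoint]$ are parallel shifts of one another and their intersection is non-empty for $\NormOf{\ThePoint - \AltPoint}$ small. Once $\ThrPoint$ is found inside this half-intersection, it automatically lies in the full double-cone intersection, and membership in $\BasedCone{\ThePoint}$ and $\BasedCone{\AltPoint}$ is immediate.
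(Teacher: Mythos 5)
Your argument is essentially the paper's, which simply states that the corollary ``follows immediately'' from \autoref{lem:discrObservations}(\ref{discr.lattice-point-in-intersection}); you correctly take the lattice point in the four-fold intersection as the middle vertex and note that a single half $\TheCone$ of the double cone suffices. One minor technicality: the lemma requires $\LargeRadius$ \emph{strictly} greater than $\frac{\SmallRadius+\SqrtOf{\TheDim}}{\SinOf{\TheApex}}+\SmallRadius$, while you plug in equality; this is easily repaired using the strict bound $\NormOf{\ThePoint-\AltPoint}<\SmallRadius$ (apply the lemma with a slightly smaller radius parameter), or by inspecting the lemma's proof, which actually places the middle lattice point at distance at most $\frac{\SmallRadius+\SqrtOf{\TheDim}}{\SinOf{\TheApex}}$ from both $\ThePoint$ and $\AltPoint$, comfortably below the stated $\LargeRadius$.
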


\begin{definition}  
  For $\SmallRadius\leq\LargeRadius$, we call a lattice point
  $\ThePoint\in\TheLattice$
  \notion{$\SmallRadius$-$\LargeRadius$-connected}, if any lattice
  point $\AltPoint\in\BallOf[\SmallRadius]{\ThePoint}$ is connected
  in $\DirGraph$ to $\ThePoint$ via an undirected edge path not
  leaving $\BallOf[\LargeRadius]{\ThePoint}$.
\end{definition}
The following lemma is the discrete version of the
density of well-connected points 
(\autoref{lem:observations} (\ref{cont.well-connected-are-dense})).

  \begin{lemma}\label{discr.r-R-connected-are-dense}
    For any $\SmallRadius\geq 0$, any
    \(
      \LargeRadius > \frac{\SqrtOf{\TheDim} + \SmallRadius}{\SinOf{\TheApex}}
    \),
    and any lattice point $\ThePoint\in\TheLattice$, there is an
    $\SmallRadius$-$\LargeRadius$-connected lattice point
    $\AltPoint\in\BallOf[\LargeRadius]{\ThePoint}$.
  \end{lemma}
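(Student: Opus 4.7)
The plan is to find $\AltPoint$ by exploiting a single cone $\TheCone$ sitting inside the double cone $\TheConfOf{\ThePoint}$, using \autoref{lem:discrObservations}\,(\ref{discr.ensure-lattice-point}) to place $\AltPoint$ close enough to $\ThePoint$ that the entire $\SmallRadius$-ball around $\AltPoint$ still lies in $\BasedCone{\ThePoint}$. Then $\ThePoint$ itself will play the role of a common out-neighbor witnessing that every lattice point in $\BallOf[\SmallRadius]{\AltPoint}$ can be reached from $\AltPoint$ by an undirected edge path of length two.

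Concretely, I would first pick any single cone $\TheCone$ with apex angle at least $\TheApex$ contained in $\TheConfOf{\ThePoint}$. The strict inequality $\LargeRadius > \frac{\SqrtOf{\TheDim}+\SmallRadius}{\SinOf{\TheApex}}$ is precisely the hypothesis of \autoref{lem:discrObservations}\,(\ref{discr.ensure-lattice-point}), which then produces a lattice point
\[
  \AltPoint \,\in\, \BallOf[\LargeRadius]{\ThePoint}\,\intersect\,\TheCone[\ThePoint]
  \quad\text{with}\quad \BallOf[\SmallRadius]{\AltPoint}\,\subset\,\TheCone[\ThePoint]\,\subset\,\BasedCone{\ThePoint}.
\]
I then claim this $\AltPoint$ is $\SmallRadius$-$\LargeRadius$-connected. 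For any lattice point $\ThrPoint\in\BallOf[\SmallRadius]{\AltPoint}\cap\TheLattice$, the inclusion above yields $\ThrPoint\in\BasedCone{\ThePoint}$, so $\DirGraph$ contains a directed edge from $\ThePoint$ to $\ThrPoint$; the same reasoning gives a directed edge from $\ThePoint$ to $\AltPoint$. Concatenating these two edges provides an undirected edge path $\AltPoint - \ThePoint - \ThrPoint$. To check that this path remains in $\BallOf[\LargeRadius]{\AltPoint}$, note that $\ThrPoint$ is there by assumption and $\ThePoint$ is there because $\NormOf{\ThePoint - \AltPoint} < \LargeRadius$ by the choice of $\AltPoint$.

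This is the discrete counterpart of \autoref{lem:observations}\,(\ref{cont.well-connected-are-dense}), and once \autoref{lem:discrObservations} is on the table the argument is essentially formal. The only mild subtlety worth flagging is that $\SmallRadius$-$\LargeRadius$-connectedness is measured in a ball centered at $\AltPoint$, not at $\ThePoint$, so the pivot $\ThePoint$ itself must lie in $\BallOf[\LargeRadius]{\AltPoint}$; this is secured precisely by arranging $\AltPoint \in \BallOf[\LargeRadius]{\ThePoint}$, which is exactly what part~(\ref{discr.ensure-lattice-point}) of the auxiliary lemma supplies.
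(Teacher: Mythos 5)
Your proof is correct and is essentially the paper's own argument: invoke \autoref{lem:discrObservations}\,(\ref{discr.ensure-lattice-point}) to produce a lattice point $\AltPoint\in\BallOf[\LargeRadius]{\ThePoint}\intersect\BasedCone{\ThePoint}$ whose $\SmallRadius$-ball stays inside $\BasedCone{\ThePoint}$, then use $\ThePoint$ as the middle vertex of a length-two path to every lattice point in $\BallOf[\SmallRadius]{\AltPoint}$. You also correctly flag and resolve the one point the paper leaves implicit, namely that the containing ball is centered at $\AltPoint$ rather than $\ThePoint$.
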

  \begin{proof}
    The larger radius $\LargeRadius$ is chosen so that
    \(
      \BallOf[\LargeRadius]{\ThePoint}
      \intersect
      (\BasedCone{\ThePoint})
    \)
    contains a lattice point $\AltPoint$ whose $\SmallRadius$-ball
    $\BallOf[\SmallRadius]{\AltPoint}$ lies within the double
    cone $\BasedCone{\ThePoint}$. Thus any two points
    in $\BallOf[\SmallRadius]{\AltPoint}$ are connected via $\ThePoint$,
    and $\ThePoint$ is within distance $\LargeRadius$ from $\AltPoint$.
  \end{proof}

  Our discrete variant of \autoref{cont.ueber-bande}
  reads as follows:
  \begin{lemma}\label{discr.ueber-bande}
    Consider two lattice points $\ThePoint,\AltPoint\in\TheLattice$ of
    distance less than $\SmallRadius$. Let 
$\TheDoubleCone=\TheConfOf{\AltPoint}$
    be the cone type of $\AltPoint$ and let
    \(
      \LargeRadius>\Enlarge{\SmallRadius}
    \).
    Assume that
    \(
      \BallOf[\SmallRadius]{\ThePoint}\intersect
      \TheDoubleCone[\ThePoint]
    \)
    contains a lattice point $\ThrPoint$ of type $\TheDoubleCone$. Then
    there is an edge path from $\AltPoint$ to $\ThePoint$ not leaving
    $\BallOf[\LargeRadius]{\ThePoint}$.
  \end{lemma}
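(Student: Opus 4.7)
The plan is to mirror the proof of the continuous analogue \autoref{cont.ueber-bande} while carefully tracking distances so that no vertex leaves $B_R(x)$. The path will consist of three edges: $y - w - z - x$, where the auxiliary lattice point $w$ is positioned close to $z$ (rather than close to $y$), which is precisely what makes the radius bound $R > \Enlarge{s}$ sufficient.

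First, since $V$ is symmetric ($V = -V$), the hypothesis $z \in V[x]$ yields $x - z \in V$, so $x \in V[z] = \Gamma(z) + z$; this produces the edge between $z$ and $x$. Next, to connect $y$ to $z$ (both of type $V$) by a two-edge path, I would apply \autoref{lem:discrObservations}(1) to one of the single cones comprising $V$, based at $z$, with inner radius $2s$. This yields a lattice point $w$ with
\[
  |w - z| < \frac{2s + \sqrt{d}}{\sin(\vartheta)}
  \qquad\text{and}\qquad
  B_{2s}(w) \subset V[z].
\]
The inclusion directly gives $w \in V[z]$, hence the edge between $z$ and $w$. For the edge between $y$ and $w$, the triangle inequality yields $|y - z| \leq |y - x| + |x - z| < 2s$; writing $w - y = (w - z) + (z - y)$ and applying the inclusion $B_{2s}(w) \subset V[z]$ with displacement $\eta = z - y$ gives $w - y \in V$, as required.

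Finally, I verify that every vertex of the path $y, w, z, x$ lies in $B_R(x)$. The bounds $|y - x| < s$ and $|z - x| < s$ are immediate, and the triangle inequality gives
\[
  |w - x| \,\leq\, |w - z| + |z - x| \,<\, \frac{2s + \sqrt{d}}{\sin(\vartheta)} + s \,=\, \Enlarge{s} \,<\, R.
\]
The main subtlety is the placement of $w$: applying \autoref{lem:discrObservations}(1) centered at $y$ instead would only yield $|w - x| \leq |w - y| + |y - x| < s + \frac{2s + \sqrt d}{\sin(\vartheta)} + s$, exceeding $\Enlarge{s}$ by an additive $2s$. Using $z$ as the base point (which is itself within $s$ of $x$) is what makes the enlargement $\Enlarge{s}$ just tight enough to contain the entire path.
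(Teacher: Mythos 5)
Your proof is correct and follows essentially the same route as the paper's: both construct the three-edge path $y-w-z-x$, with $w$ a lattice point in the triple intersection $B_R(x)\cap V[z]\cap V[y]$ obtained via \autoref{lem:discrObservations}. The only inaccuracy is in your closing remark: centering \autoref{lem:discrObservations}(1) at $y$ with inner radius $2r$ would also give $|w-y|\leq\frac{2r+\sqrt{d}}{\sin(\vartheta)}$ and $B_{2r}(w)\subset V[y]$ (hence $w\in V[z]$ by the same displacement argument with $\eta=y-z$), so $|w-x|\leq|w-y|+|y-x|<\Enlarge{r}$ just as before; since both $y$ and $z$ lie within $r$ of $x$, the choice of base point does not affect the final bound.
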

  \begin{proof}
    There is a directed edge from $\ThrPoint$ to $\ThePoint$.
    Note that the distance of $\AltPoint$ and $\ThrPoint$ is at most
    $2\SmallRadius$. Hence $\LargeRadius$ is chosen so that the triple 
intersection
    \[
      \BallOf[\LargeRadius]{\ThePoint}
      \intersect
      \TheDoubleCone[\ThrPoint]
      \intersect
      \TheDoubleCone[\AltPoint]
      \,\,=\,\,
      \BallOf[\LargeRadius]{\ThePoint}
      \intersect
      \BasedCone{\ThrPoint}
      \intersect
      \BasedCone{\AltPoint}
    \]
    contains a lattice point. Through this point, $\ThrPoint$ and $\AltPoint$
    are connected.
  \end{proof}
  
 The assertion of the following lemma is obvious. 
  \begin{lemma} There is a constant $\JumpMax>0$, depending only on
  $\TheApex$ and the dimension $\TheDim$, such that for any double
  cone $\TheDoubleCone\in\TheConfOf{\TheLattice}$ the following condition
  holds:
  \begin{quote}
    If for a lattice point $\ThePoint\in\TheDoubleCone$, there is
    a lattice point in $\TheDoubleCone$ closer to $0$, then there
    is such a lattice point in 
$\TheDoubleCone\intersect\BallOf[\JumpMax]{\ThePoint}$.
  \end{quote}
  I.e., we can go from $\ThePoint$ within $\TheDoubleCone$ to a lattice
  point of minimum distance to the tip via a chain of jumps each bounded
  in length from above by $\JumpMax$.
  \end{lemma}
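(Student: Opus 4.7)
My plan is to split the argument on the magnitude of $\NormOf{\ThePoint}$. Choose a threshold $R_0=R_0(\TheApex,\TheDim)$, to be specified during the argument. Write $\TheDoubleCone=\TheCone\cup(-\TheCone)$ and assume without loss of generality that $\ThePoint\in\TheCone$, with axis $v$ and apex angle at least $\TheApex$.

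In the small regime $\NormOf{\ThePoint}\leq R_0$, the claim is trivial: by hypothesis there exists $\AltPoint\in\TheDoubleCone\cap\TheLattice$ with $\NormOf{\AltPoint}<\NormOf{\ThePoint}\leq R_0$, whence $\NormOf{\AltPoint-\ThePoint}\leq\NormOf{\AltPoint}+\NormOf{\ThePoint}\leq 2R_0$. Any choice $\JumpMax\geq 2R_0$ covers this case.

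In the large regime $\NormOf{\ThePoint}>R_0$, I would construct $\AltPoint$ explicitly by producing a center $c\in\TheSpace$ satisfying \textup{(i)} $\NormOf{c-\ThePoint}\leq\JumpMax-\SqrtOf{\TheDim}/2$, \textup{(ii)} $\BallOf[\SqrtOf{\TheDim}/2]{c}\subset\TheCone$, and \textup{(iii)} $\NormOf{y}<\NormOf{\ThePoint}$ for every $y\in\BallOf[\SqrtOf{\TheDim}/2]{c}$. Since every closed ball of radius $\SqrtOf{\TheDim}/2$ in $\TheSpace$ meets $\TheLattice$, such a $c$ yields a lattice point $\AltPoint\in\TheCone\cap\BallOf[\JumpMax]{\ThePoint}$ with $\NormOf{\AltPoint}<\NormOf{\ThePoint}$, as desired.

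To construct $c$, I would work in the two-plane spanned by $v$ and the component of $\ThePoint$ orthogonal to $v$, and split further on the angle $\beta$ of $\ThePoint$ from the axis. If $\beta\leq\TheApex/2$, pure radial shrinking $c:=(1-\SqrtOf{\TheDim}/\NormOf{\ThePoint})\ThePoint$ works: then $\NormOf{c-\ThePoint}=\SqrtOf{\TheDim}$, $\NormOf{c}=\NormOf{\ThePoint}-\SqrtOf{\TheDim}$, and the distance from $c$ to $\partial\TheCone$ equals $\NormOf{c}\SinOf{\TheApex-\beta}\geq\NormOf{c}\SinOf{\TheApex/2}$, which exceeds $\SqrtOf{\TheDim}/2$ once $R_0\geq\SqrtOf{\TheDim}+\SqrtOf{\TheDim}/(2\SinOf{\TheApex/2})$. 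For $\beta>\TheApex/2$, radial shrinking alone does not suffice, and I would combine it with a tangential shift toward the axis, writing $c:=\ThePoint-s_1 v - s_2 w$, where $w$ is the unit vector in the orthogonal component of $\ThePoint$ and $s_1,s_2>0$ are chosen to depend only on $\TheApex$ and $\TheDim$ so that $c$ is pushed into the interior of $\TheCone$ before its norm is reduced.

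The main obstacle is precisely the near-boundary case $\beta>\TheApex/2$: without the tangential shift the ball $\BallOf[\SqrtOf{\TheDim}/2]{c}$ would leak across $\partial\TheCone$, violating \textup{(ii)}. Balancing the two shifts so that \textup{(i)}--\textup{(iii)} hold simultaneously while $\NormOf{c-\ThePoint}$ remains bounded by a constant depending only on $\TheApex$ and $\TheDim$ is an elementary planar computation, with the $\TheApex$-dependence entering through $1/\SinOf{\TheApex}$-type factors. This is the only real work and is presumably why the authors label the statement as obvious.
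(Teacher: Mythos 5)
Your overall plan --- split on $\|x\|$ and, in the large regime, place a closed ball $\overline{B_{\sqrt{d}/2}(c)}$ inside the cone near $x$ whose every point has norm less than $\|x\|$ --- is a sound route, and the sub-case $\beta\le\vartheta/2$ is handled correctly. The gap is in the near-boundary sub-case: the shift $c=x-s_1 v-s_2 w$ with $s_1,s_2>0$ cannot in general satisfy your condition (ii), and the problematic factor is not $1/\sin\vartheta$ but $1/\cos\vartheta'$, where $\vartheta'$ is the apex angle of the particular double cone $V\in\Gamma(\Z^d)$, known only to lie in $[\vartheta,\pi/2]$. In the plane spanned by $v$ and $w$, the distance from $c$ to the nearest boundary ray of $\widetilde V$ equals $\|x\|\sin(\vartheta'-\beta)-s_1\sin\vartheta'+s_2\cos\vartheta'$: the tangential shift $-s_2 w$ gains only $s_2\cos\vartheta'$, while the term $-s_1 v$ \emph{loses} $s_1\sin\vartheta'$. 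When $\vartheta'$ is close to $\pi/2$ and $x$ is a lattice point at boundary distance $\|x\|\sin(\vartheta'-\beta)<\sqrt d/2$ (such $x$ with $\|x\|$ arbitrarily large exist; take e.g.\ $v=e_1$, $d\ge 5$, $x=(1,N,0,\dots,0)$), no choice of $s_1,s_2>0$ makes this expression $\ge\sqrt d/2$. The limiting case $\vartheta'=\pi/2$ makes the defect transparent: $\widetilde V$ is an open half-space, the boundary distance is $\langle v,c\rangle=\langle v,x\rangle-s_1$, and your shift can only decrease it.

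The remedy is to decompose the shift along and perpendicular to the nearest boundary ray rather than along $v$ and $w$. With $u=\cos\vartheta'\,v+\sin\vartheta'\,w$ (unit vector along that ray) and $n=\sin\vartheta'\,v-\cos\vartheta'\,w$ (inward unit normal), set $c=x-au+bn$. Since $u\perp n$, $\langle x,u\rangle=\|x\|\cos(\vartheta'-\beta)$ and $\langle x,n\rangle=\|x\|\sin(\vartheta'-\beta)$, so the boundary distance becomes $\|x\|\sin(\vartheta'-\beta)+b$, gaining exactly $b$ uniformly in $\vartheta'$, while $\|c\|^2=\|x\|^2-2a\|x\|\cos(\vartheta'-\beta)+2b\|x\|\sin(\vartheta'-\beta)+a^2+b^2$ decreases for $a,b\asymp\sqrt d$ once $\|x\|$ is large, because $\cos(\vartheta'-\beta)$ is bounded below in the near-boundary regime. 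The $v$-component of this shift is $b\sin\vartheta'-a\cos\vartheta'$, which is \emph{positive} when $\tan\vartheta'>a/b$; your sign constraint $s_1>0$ forbids exactly the direction needed when $\vartheta'$ is large. It is also cleaner to split on the depth $\|x\|\sin(\vartheta'-\beta)$ than on $\beta$, since $\beta>\vartheta/2$ alone does not place $x$ near $\partial V$ when $\vartheta'$ exceeds $\vartheta$ by a lot.
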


 \subsection{The Induction}
  It is our aim, to prove that every two lattice points in a given ball of 
radius $\SmallRadius$ are connected via an edge path that does not leave a 
larger ball of radius $\LargeRadius$. Here, the radius $\LargeRadius$ shall 
depend only on $\SmallRadius$, $\TheApex$ and $\TheDim$. In the following 
lemma, 
we show this for a series of values for $\SmallRadius$ respectively 
$\LargeRadius$. The proof is similar to the proof of the corresponding result 
in the continuous setting, cf. \autoref{cont.connectivity}.
 
\begin{lemma}\label{discr.core-induction}
    There are constants
    \(
      \SmallRadius[1]\leq\ChromaticBound[1]\leq\LargeRadius[1],
      \SmallRadius[2]\leq\ChromaticBound[2]\leq\LargeRadius[2],
      \ldots
    \),
    depending only on $\TheApex$ and $\TheDim$, with
    \(
      \JumpMax
      < \SmallRadius[1]
    \)
    and 
    \( 
      \SmallRadius[i]<\SmallRadius[i+1],
      \ChromaticBound[i]<\ChromaticBound[i+1],
      \LargeRadius[i]<\LargeRadius[i+1]
     \) for every $i \in \N$
    such that any lattice point $\ThePoint\in\TheLattice$ is
    $\SmallRadius[\NumTypes]$-$\LargeRadius[\NumTypes]$-connected provided
    at most $\NumTypes$ cone types are realized at the lattices points
    in $\BallOf[{\ChromaticBound[\NumTypes]}]{\ThePoint}$.
  \end{lemma}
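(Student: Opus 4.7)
The plan is to argue by induction on $\NumTypes$, closely mirroring the structure of the continuous result \autoref{cont.connectivity}.

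For the base case $\NumTypes=1$, every lattice point of $\BallOf[\ChromaticBound[1]]{\ThePoint}$ shares the cone type of $\ThePoint$, so any $\AltPoint\in\BallOf[\SmallRadius[1]]{\ThePoint}\intersect\TheLattice$ is connected to $\ThePoint$ by the two-edge path furnished by \autoref{discr.connect-two-of-same-type}, whose total length is controlled by $\Enlarge{\SmallRadius[1]}$. Taking $\SmallRadius[1]>\JumpMax$, $\ChromaticBound[1]=\SmallRadius[1]$, and $\LargeRadius[1]>\Enlarge{\SmallRadius[1]}$ then settles the base.

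For the induction step, assume the statement for $\NumTypes$ with constants $\SmallRadius[\NumTypes]\leq\ChromaticBound[\NumTypes]\leq\LargeRadius[\NumTypes]$, and select $\SmallRadius[\NumTypes+1],\ChromaticBound[\NumTypes+1],\LargeRadius[\NumTypes+1]$ sufficiently larger subject to inequalities that emerge below. Given $\AltPoint\in\BallOf[{\SmallRadius[\NumTypes+1]}]{\ThePoint}\intersect\TheLattice$, set $\TheDoubleCone=\TheConfOf{\AltPoint}$. I split into two cases mirroring the continuous proof. In Case A, some lattice point of type $\TheDoubleCone$ lies in $\TheDoubleCone[\ThePoint]\intersect\BallOf[{\SmallRadius[\NumTypes+1]}]{\ThePoint}$, and \autoref{discr.ueber-bande} directly produces an edge path from $\AltPoint$ to $\ThePoint$ inside $\BallOf[{\Enlarge{\SmallRadius[\NumTypes+1]}}]{\ThePoint}$. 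In Case B, no such point exists, so the type $\TheDoubleCone$ is absent from $\TheDoubleCone[\ThePoint]\intersect\BallOf[{\ChromaticBound[\NumTypes+1]}]{\ThePoint}$, unlocking the inductive hypothesis on a sub-region with only $\NumTypes$ types.

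In Case B I apply \autoref{lem:discrObservations}(\ref{discr.ensure-lattice-point}) to find $\WellConnectedPt\in\TheDoubleCone[\ThePoint]$ with $\BallOf[{\ChromaticBound[\NumTypes]}]{\WellConnectedPt}\subset\TheDoubleCone[\ThePoint]\intersect\BallOf[{\ChromaticBound[\NumTypes+1]}]{\ThePoint}$; since type $\TheDoubleCone$ is excluded in that ball, at most $\NumTypes$ types appear there and the induction hypothesis upgrades $\WellConnectedPt$ to $\SmallRadius[\NumTypes]$-$\LargeRadius[\NumTypes]$-connected. Next, \autoref{lem:discrObservations}(\ref{discr.lattice-point-in-intersection}) supplies a lattice point in $\TheDoubleCone[\ThePoint]\intersect\TheDoubleCone[\AltPoint]$ providing a direct edge from $\AltPoint$ into the Case-B region. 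Any lattice point $p$ whose $\ChromaticBound[\NumTypes]$-ball still fits inside $\TheDoubleCone[\ThePoint]\intersect\BallOf[{\ChromaticBound[\NumTypes+1]}]{\ThePoint}$ likewise qualifies for the inductive hypothesis, so a chain of such hubs (existing densely by \autoref{discr.r-R-connected-are-dense}) with overlapping $\SmallRadius[\NumTypes]$-neighborhoods propagates the connection from $u$ through $\WellConnectedPt$ back toward $\ThePoint$, and all concatenated subpaths are absorbed in $\BallOf[{\LargeRadius[\NumTypes+1]}]{\ThePoint}$.

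The main obstacle is the calibration of the three sequences and, in particular, the final bridge from $\WellConnectedPt$ to $\ThePoint$: \autoref{lem:discrObservations}(\ref{discr.ensure-lattice-point}) forces $\NormOf{\WellConnectedPt-\ThePoint}\gtrsim\ChromaticBound[\NumTypes]/\SinOf{\TheApex}\geq\SmallRadius[\NumTypes]$ because $\SmallRadius[\NumTypes]\leq\ChromaticBound[\NumTypes]$, so a single invocation of the induction hypothesis at $\WellConnectedPt$ cannot itself reach $\ThePoint$. This is resolved by chaining several hubs along the axis of $\TheDoubleCone[\ThePoint]$, tracking how each hop inflates the containing ball. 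The emerging inequalities—$\SmallRadius[\NumTypes+1]\gtrsim\ChromaticBound[\NumTypes]+\ChromaticBound[\NumTypes]/\SinOf{\TheApex}$, $\ChromaticBound[\NumTypes+1]\gtrsim\SmallRadius[\NumTypes+1]$, and $\LargeRadius[\NumTypes+1]$ large enough to accommodate the composite path—give the recursive definitions of the three sequences in terms of $\TheApex$ and $\TheDim$ alone.
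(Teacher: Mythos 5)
Your base case and Case A are correct and match the paper. But your Case B has a genuine gap, precisely at the point you flag as "the main obstacle."

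Every hub $p$ for which the induction hypothesis applies must have $\BallOf[{\ChromaticBound[\NumTypes]}]{p}\subset\TheDoubleCone[\ThePoint]$, hence $p\in V_{\ChromaticBound[\NumTypes]}[\ThePoint]$ and therefore $\NormOf{p-\ThePoint}\geq\ChromaticBound[\NumTypes]/\SinOf{\TheApex}$. The induction hypothesis lets such a hub reach only points within distance $\SmallRadius[\NumTypes]\leq\ChromaticBound[\NumTypes]<\ChromaticBound[\NumTypes]/\SinOf{\TheApex}$. So no hub in your chain can ever reach $\ThePoint$, and chaining does not remove this constraint: all hubs suffer the same lower bound on their distance to $\ThePoint$. "Tracking how each hop inflates the containing ball" does not change the fact that the last hop falls short by a fixed geometric gap of order $\ChromaticBound[\NumTypes](1/\SinOf{\TheApex}-1)$. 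Moreover, $\ThePoint$'s own cone $\TheConfOf{\ThePoint}[\ThePoint]$ need not meet $V_{\ChromaticBound[\NumTypes]}[\ThePoint]$, so there is no direct edge from $\ThePoint$ into the hub region either.

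The paper resolves this by \emph{not} using $\ThePoint$ as the anchor. Instead one first picks, via \autoref{discr.r-R-connected-are-dense} (which needs no inductive input, only the elementary geometry of a single cone), a lattice point $\WellConnectedPt\in\BallOf[{\SmallRadius[\NumTypes]}]{\ThePoint}$ that is $\AltSmall$-$\AltLarge$-connected for suitably chosen $\AltSmall,\AltLarge$. Then the double half-cone is based at $\WellConnectedPt$ (not at $\ThePoint$). The decisive point is that the \emph{tip} of that double half-cone lies within distance $\AltSmall$ of $\WellConnectedPt$, so once the chain of $\SmallRadius[\NumTypes-1]$-$\LargeRadius[\NumTypes-1]$-connected hubs has carried the connection from $\AltPoint$ down to a lattice point near the tip, the $\AltSmall$-$\AltLarge$-connectedness of $\WellConnectedPt$ bridges the remaining gap to $\WellConnectedPt$. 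Applying the same argument with $\AltPoint=\ThePoint$ connects $\ThePoint$ to $\WellConnectedPt$ as well, and hence $\AltPoint$ to $\ThePoint$. Your proof lacks precisely this auxiliary locally-connected anchor; without it the bridge from the hubs back to $\ThePoint$ cannot be closed, and the induction does not go through.
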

  \begin{proof}
    We induct on $\NumTypes$. The case $\NumTypes=1$ follows directly from
    \autoref{discr.connect-two-of-same-type}: choose
    $\ChromaticBound[1]=\SmallRadius[1]>\JumpMax$
    and
    \(
      \LargeRadius[1] >
      \frac{\SmallRadius[1]+\SqrtOf{\TheDim}}{\SinOf{\TheApex}}
    \).

    For the induction step, assume that constants up to 
$\ChromaticBound[\NumTypes-1]$,
    $\SmallRadius[\NumTypes-1]$, and $\LargeRadius[\NumTypes-1]$ have already 
been found.
    Choose:
    \[
      \AltSmall \,\,>\,\, 
\frac{\ChromaticBound[\NumTypes-1]+\SqrtOf{\TheDim}}{\SinOf{\TheApex}}
      \qquad\text{and}\qquad
      \AltLarge \,\,>\,\, \frac{\AltSmall+\SqrtOf{\TheDim}}{\SinOf{\TheApex}}
    \]
    Note that by \autoref{lem:discrObservations} 
(\ref{discr.ensure-lattice-point}) any
    set
    \(
      \BallOf[\AltSmall]{\WellConnectedPt}\intersect
      \TheDoubleCone[\WellConnectedPt]
    \)
    contains a lattice point $\ThrPoint$ with
    \(
      \BallOf[{\ChromaticBound[\NumTypes-1]}]{\ThrPoint}
      \subset
       \TheDoubleCone[\WellConnectedPt]
    \).
    If $\WellConnectedPt$ is $\AltSmall$-$\AltLarge$-connected, there is an
    edge path from $\WellConnectedPt$ to $\ThrPoint$ not
    leaving $\BallOf[\AltLarge]{\WellConnectedPt}$.

    So we put $\SmallRadius[\NumTypes]=\AltLarge$. By
    \autoref{discr.r-R-connected-are-dense}, there is an
    $\AltSmall$-$\AltLarge$-connected lattice point
    $\WellConnectedPt\in\BallOf[{\SmallRadius[\NumTypes]}]{\ThePoint}$.
    Consider an arbitrary point
    $\AltPoint\in\BallOf[{\SmallRadius[\NumTypes]}]{\ThePoint}$.
    It suffices to choose
    $\LargeRadius[\NumTypes]$ and $\ChromaticBound[\NumTypes]$ so that
    we can ensure the existence of an edge path from $\WellConnectedPt$
    to $\AltPoint$ within $\BallOf[{\LargeRadius[\NumTypes]}]{\ThePoint}$.

    Let $\TheDoubleCone=\TheConfOf{\AltPoint}$ be the cone type of $\AltPoint$.
    The distance of $\AltPoint$ and $\WellConnectedPt$ is less than
    $2\AltLarge$. We are interested in the double half cone
    \(
      \WellConnectedPt
      +
      \ShrinkDoubleCone{\ChromaticBound[\NumTypes-1]}
    \).
    Either tip of the double half cone is within distance
    $\AltSmall<\AltLarge$ of $\WellConnectedPt$ and thus
    within distance $3\AltLarge$ of $\AltPoint$. By
    \autoref{lem:discrObservations} 
(\ref{discr.lattice-point-in-intersection}), 
the
    intersection
    \[
      \BallOf[\WellSmall]{\WellConnectedPt}
      \,\intersect\,
      (\WellConnectedPt+\ShrinkDoubleCone{\ChromaticBound[\NumTypes-1]})
      \,\intersect\,
      \TheDoubleCone[\AltPoint]
    \]
    contains a lattice point for any
    \(
      \WellSmall >
      \frac{
        3\AltLarge+\SqrtOf{\TheDim}
      }{
        \SinOf{\TheApex}
      }
      +
      3\AltLarge
    \).
    
    Choosing
    \(
      \ChromaticBound[\NumTypes]
      >
      \AltLarge+\WellSmall+\ChromaticBound[\NumTypes-1]
    \)
    we can use the induction hypothesis as follows.
    If no lattice point in the region
    \(
      \BallOf[{\WellSmall+\ChromaticBound[\NumTypes-1]}]{\WellConnectedPt}
      \intersect
      \TheDoubleCone[\WellConnectedPt]
      \subset
      \BallOf[{\ChromaticBound[\NumTypes]}]{\ThePoint}
    \)
    is of cone type $\TheDoubleCone$, we see that there are
    at most $\NumTypes-1$ different cone types realized within
    \(
      \BallOf[{\WellSmall+\ChromaticBound[\NumTypes-1]}]{\WellConnectedPt}
      \intersect
      \TheDoubleCone[\WellConnectedPt]
    \).
    Hence, each lattice point in
    \(
      \BallOf[\WellSmall]{\WellConnectedPt}
      \intersect
      (\WellConnectedPt+\ShrinkDoubleCone{\ChromaticBound[\NumTypes-1]})
    \)
    is $\SmallRadius[\NumTypes-1]$-$\LargeRadius[\NumTypes-1]$-connected.
    Since $\SmallRadius[\NumTypes-1]>\JumpMax$, all these well-connected
    balls overlap and are therefore connected to a lattice point
    $\ThrPoint$ near the tip of the double half cone. Recall that
    $\ThrPoint$ is within distance $\AltSmall$ of $\WellConnectedPt$ and
    that $\WellConnectedPt$ is $\AltSmall$-$\AltLarge$-connected. Hence all
    the lattice points in
    \(
      \BallOf[\WellSmall]{\WellConnectedPt}
      \intersect
      (\WellConnectedPt+\ShrinkDoubleCone{\ChromaticBound[\NumTypes-1]})
    \)
    are connected to $\WellConnectedPt$.
    
    On the other hand, one of these lattice points
    lies within the double cone $\AltPoint+\TheDoubleCone
    =\BasedCone{\AltPoint}$ and is hence directly connected to $\AltPoint$.
    Thus, $\AltPoint$ is connected to $\WellConnectedPt$. Each edge path
    used will take us at most $\AltLarge$ or $\LargeRadius[\NumTypes-1]$
    outside of $\BallOf[\WellSmall]{\WellConnectedPt}$.
    Thus, we might choose
    \(
      \LargeRadius[\NumTypes]
      >
      2\AltLarge+\LargeRadius[\NumTypes-1]+\WellSmall
    \).
    We might need to increase this number, to ensure
    \(
      \ChromaticBound[\NumTypes]
      \leq
      \LargeRadius[\NumTypes]
    \),
    but the increase incurred in treating the remaining case is much
    worse.

    It remains to deal with the possibility that there is a lattice point
    of type $\TheDoubleCone$ in the region
    \(
      \BallOf[{\WellSmall+\ChromaticBound[\NumTypes-1]}]{\WellConnectedPt}
      \intersect
      \TheDoubleCone[\WellConnectedPt]
    \).
    Since $\WellConnectedPt$ and $\AltPoint$ are of distance at most
    $2\AltLarge<\WellSmall$, \autoref{discr.ueber-bande}
    applies and we choose
    \(
      \LargeRadius[\NumTypes]
      >
      \Enlarge{(\WellSmall+\ChromaticBound[\NumTypes-1])}
    \).
  \end{proof}
\begin{corollary}\label{discrete--template}
    For every $\SmallRadius>0$ there is $\LargeRadius\geq \SmallRadius$, 
depending only
    on $\SmallRadius$, $\TheApex$ and $\TheDim$, such that for any configuration
    $\TheConf\mapcolon\TheLattice\rightarrow\SetDoubleCones$ with apex
    angles bounded from below by $\TheApex$ any lattice point
    $\ThePoint\in\TheLattice$ is $\SmallRadius$-$\LargeRadius$-connected.
  \end{corollary}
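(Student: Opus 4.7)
The plan is to deduce the statement from \autoref{discr.core-induction} combined with the reduction to a finitely-valued configuration provided by \autoref{cor:ref-config}. The sequences $(\SmallRadius[i])$, $(\ChromaticBound[i])$, $(\LargeRadius[i])$ supplied by \autoref{discr.core-induction} depend only on $\TheApex$ and $\TheDim$ and are strictly increasing in the index, so $\SmallRadius[i]$ can be made as large as desired. For the prescribed $\SmallRadius > 0$, I pick an index $\NumTypes = \NumTypes(\SmallRadius, \TheApex, \TheDim)$ with $\SmallRadius[\NumTypes] \geq \SmallRadius$.

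The obstacle to applying \autoref{discr.core-induction} directly is its hypothesis that at most $\NumTypes$ cone types appear inside $\BallOf[{\ChromaticBound[\NumTypes]}]{\ThePoint}$, which a priori fails for a general $\TheApex$-bounded configuration $\TheConf$. To sidestep any such local count, I invoke \autoref{cor:ref-config} to replace $\TheConf$ by a configuration $\widetilde{\TheConf}$ satisfying $\widetilde{\TheConf}(\ThePoint) \subset \TheConfOf{\ThePoint}$ for all $\ThePoint$ and $\CardOf{\widetilde{\TheConf}(\TheSpace)} \leq L$, where $L = L(\TheApex, \TheDim)$ comes from \autoref{lem:ref-cones}. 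Enlarging $\NumTypes$ if necessary so that $\NumTypes \geq L$ (still depending only on $\SmallRadius$, $\TheApex$, $\TheDim$), the at-most-$\NumTypes$-cone-types hypothesis is fulfilled trivially at every lattice point for $\widetilde{\TheConf}$.

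Now \autoref{discr.core-induction} applied to $\widetilde{\TheConf}$ yields that every lattice point is $\SmallRadius[\NumTypes]$-$\LargeRadius[\NumTypes]$-connected in $\DirGraphOf{\widetilde{\TheConf}}$. Since the inclusion $\widetilde{\TheConf}(\ThePoint) \subset \TheConfOf{\ThePoint}$ implies that every directed edge of $\DirGraphOf{\widetilde{\TheConf}}$ is also a directed edge of $\DirGraphOf{\TheConf}$, undirected connectivity transfers verbatim, and any edge path in $\DirGraphOf{\widetilde{\TheConf}}$ serves as an edge path in $\DirGraphOf{\TheConf}$ with the same spatial extent. Setting $\LargeRadius = \LargeRadius[\NumTypes]$ finishes the proof, because $\BallOf[\SmallRadius]{\ThePoint} \subset \BallOf[{\SmallRadius[\NumTypes]}]{\ThePoint}$. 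I do not anticipate a substantive obstacle here; the real work is packaged in \autoref{discr.core-induction}, and this corollary merely extracts a bound depending only on $\SmallRadius$ rather than on a prescribed local cone-type count, exploiting the uniform global bound $L$ on the number of relevant cone types.
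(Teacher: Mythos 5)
Your proof is correct and follows the paper's own route: reduce to a configuration with at most $L$ cone types via \autoref{cor:ref-config}, invoke \autoref{discr.core-induction} with a sufficiently large index $\NumTypes \geq L$, and transfer connectivity back along the subgraph inclusion using the observation that $\SmallRadius$-$\LargeRadius$-connectedness for $\SmallRadius[\NumTypes] \geq \SmallRadius$ yields $\SmallRadius$-$\LargeRadius[\NumTypes]$-connectedness. One small imprecision: strict monotonicity of $(\SmallRadius[i])$ by itself does not yield unboundedness; one must observe from the construction in \autoref{discr.core-induction} that $\SmallRadius[\NumTypes] = \AltLarge > \AltSmall + \SqrtOf{\TheDim} > \ChromaticBound[\NumTypes-1] + 2\SqrtOf{\TheDim} \geq \SmallRadius[\NumTypes-1] + 2\SqrtOf{\TheDim}$, so the increments are bounded below and the sequence does diverge, guaranteeing the existence of a suitable $\NumTypes$.
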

  
  \begin{proof}
    By \autoref{cor:ref-config} we can assume WLOG that 
$\#\TheConf(\TheLattice)=L$, where $L$ is a constant that depends only on 
$\TheApex$ and $\TheDim$. Now the claim follows from 
\autoref{discr.core-induction} and the following observation: 
    If $\ThePoint$ is
    $\SmallRadius$-$\LargeRadius$-connected, it is
    $\SmallerRadius$-$\LargeRadius$-connected for
    any $\SmallerRadius\leq\SmallRadius$.
  \end{proof}

\subsection{Renormalization: Blocks and Towns}
Since the proof of \autoref{theo:main_discrete} involves a renormalization 
argument, it is important to restate \autoref{discrete--template} for 
structures at large scale (see \autoref{renormalization--}).
To this end, we introduce what we call blocks and towns. Recall our notation
  \[
    \CubeOf[\TheLength]{\ThePoint} \,\,=\,\,
    \SetOf[ \AltPoint\in\TheSpace ]{
      \InfNormOf{\AltPoint-\ThePoint}
      \leq \frac{\TheLength}{2}
    }
  \]
  for cubes.
  \begin{lemma}\label{renormalization--apex-shrink}
    For any apex angle $\TheApex$, there is a constant
    $\TheCubeDistance=\TheCubeDistanceOf{\TheApex}$ such that the
    following holds for any points $\ThePoint,\AltPoint$ of distance
    at least $\TheCubeDistance\TheLength$:
    \begin{quote}
      If $\TheCone$ is a cone of apex angle $\frac{\TheApex}{2}$ and
      $\AltPoint\in \TheCone[\ThePoint]$, then
      \[
        \CubeOf[\TheLength]{\AltPoint}
        \,\,\subset\,\,
        \bigcap_{\ThrPoint\in\CubeOf[\TheLength]{\ThePoint}}
          \LargeCone[\ThrPoint]
      \]
      for the cone $\LargeCone$ with apex angle $\TheApex$ and the same
      axis as $\TheCone$.
    \end{quote}
  \end{lemma}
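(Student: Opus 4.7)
The plan is to control how far the vector $z-\ThrPoint$ can rotate away from $\AltPoint-\ThePoint$ as $\ThrPoint$ ranges over $\CubeOf[\TheLength]{\ThePoint}$ and $z$ ranges over $\CubeOf[\TheLength]{\AltPoint}$. Since $\AltPoint\in\TheCone[\ThePoint]$ and $\TheCone$ has apex angle $\TheApex/2$, the vector $\AltPoint-\ThePoint$ makes an angle of at most $\TheApex/2$ with the common axis $v$ of $\TheCone$ and $\LargeCone$. Passing from $\TheCone$ to the cone $\LargeCone$ of apex angle $\TheApex$ leaves $\TheApex/2$ of angular slack, which is exactly what the endpoint perturbations need to fit into.

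First I would reduce the inclusion to the pointwise statement that for every $\ThrPoint\in\CubeOf[\TheLength]{\ThePoint}$ and every $z\in\CubeOf[\TheLength]{\AltPoint}$ the angle between $z-\ThrPoint$ and $v$ is at most $\TheApex$. Writing
\[
  z-\ThrPoint \,\,=\,\, (\AltPoint-\ThePoint)+\eta,
  \qquad
  \eta \,\,=\,\, (z-\AltPoint)-(\ThrPoint-\ThePoint),
\]
one has $\InfNormOf{\eta}\leq\TheLength$ and hence $\NormOf{\eta}\leq\SqrtOf{\TheDim}\,\TheLength$.

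Next I would estimate the angle $\varphi$ between $z-\ThrPoint$ and $\AltPoint-\ThePoint$ by decomposing $\eta$ into its components parallel and perpendicular to $\AltPoint-\ThePoint$. The worst case gives
\[
  \sin\varphi
  \,\,\leq\,\,
  \frac{\NormOf{\eta}}{\NormOf{\AltPoint-\ThePoint}-\NormOf{\eta}},
\]
provided the denominator is positive. Imposing
\[
  \TheCubeDistance \,\,\geq\,\, \SqrtOf{\TheDim}\Bigl(1+\frac{1}{\sin(\TheApex/2)}\Bigr)
\]
forces $\sin\varphi\leq\sin(\TheApex/2)$ and thus $\varphi\leq\TheApex/2$. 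The triangle inequality for spherical angles then yields
\[
  \angle(z-\ThrPoint,\,v)
  \,\,\leq\,\,
  \varphi + \angle(\AltPoint-\ThePoint,\,v)
  \,\,\leq\,\,
  \TheApex/2 + \TheApex/2
  \,\,=\,\,
  \TheApex,
\]
so $z-\ThrPoint\in\LargeCone$, and since this holds for all admissible $\ThrPoint$ and $z$ the desired inclusion follows.

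The only delicate point is the bound on $\sin\varphi$: it must hold regardless of the sign of the parallel component of $\eta$, which can shrink $z-\ThrPoint$. This is precisely why $\NormOf{\eta}$ is subtracted in the denominator and why we need $\TheCubeDistance\TheLength$ strictly larger than $\SqrtOf{\TheDim}\,\TheLength$; once $\TheCubeDistance$ is chosen as above the rest of the argument is elementary planar trigonometry, and $\TheCubeDistance$ visibly depends only on $\TheApex$ (and $\TheDim$), as claimed.
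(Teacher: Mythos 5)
Your argument is correct, and it takes a genuinely different route from the paper. The paper's proof leans on the established machinery of \autoref{lem:cone in intersection}: it first observes that $\CubeOf[\TheLength]{\ThePoint}\subset\BallOf[(\TheLength\sqrt{\TheDim})/2]{\ThePoint}$, then shows that the ball $\BallOf[(\TheLength\sqrt{\TheDim})/2]{\AltPoint}$ sits inside $\LargeCone[\ThePoint]$ once $|\ThePoint-\AltPoint|$ is large enough, and finally uses the half-cone inclusions of \autoref{lem:cone in intersection} to pass from a single cone $\LargeCone[\ThePoint]$ to the intersection over all $\ThrPoint\in\CubeOf[\TheLength]{\ThePoint}$. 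Your proof instead works directly with the perturbation $\eta=(z-\AltPoint)-(\ThrPoint-\ThePoint)$: you bound $\NormOf{\eta}\leq\SqrtOf{\TheDim}\,\TheLength$, use the identity $\sin\varphi=\NormOf{\eta_\perp}/\NormOf{(y-x)+\eta}$ to control the angular drift by $\NormOf{\eta}/(\NormOf{y-x}-\NormOf{\eta})$, and then close with the spherical triangle inequality. This is more self-contained (no appeal to the half-cone lemma) and gives a cleanly traceable constant $\TheCubeDistance\geq\SqrtOf{\TheDim}\bigl(1+1/\sin(\TheApex/2)\bigr)$. It is worth noting that your bound naturally produces $\sin(\TheApex/2)$ in the denominator, which is the geometrically correct quantity when one sharpens the apex from $\TheApex/2$ to $\TheApex$; the paper's stated $\TheCubeDistance\geq 3\SqrtOf{\TheDim}/(2\sin\TheApex)$ appears to use $\sin\TheApex$ where $\sin(\TheApex/2)$ would be safer, so your version is, if anything, slightly more careful. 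One minor point to make explicit: since $\NormOf{\eta}<\NormOf{y-x}$ under your choice of $\TheCubeDistance$, one has $\cos\varphi>0$ and hence $\varphi<\pi/2$, so $\sin\varphi\leq\sin(\TheApex/2)$ really does force $\varphi\leq\TheApex/2$; you gesture at this but it deserves a sentence.
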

  \begin{proof}
For any $\ThePoint \in \TheLattice$ we have 
\[ \BBB_{(\TheLength\sqrt{\TheDim})/2}(\ThePoint) \,\,\supset\,\,
\CubeOf[\TheLength]{\ThePoint}.\]

Let $\TheCone$ be a cone of apex angle $\frac{\TheApex}{2}$ and symmetry axis 
$v$ 
and let $\LargeCone$ be a cone with apex angle $\TheApex$ and symmetry axis 
$v$. 
Let $\ThePoint \in \TheLattice$. If $\AltPoint \in \TheCone[\ThePoint]$ and 
$\DistOf{\ThePoint}{\AltPoint}\geq\frac{\TheLength 
\sqrt{\TheDim}}{2\SinOf{\TheApex}}$, then
\[ \BBB_{(\TheLength\sqrt{\TheDim})/2}(\AltPoint) \,\,\subset\,\,
\LargeCone[\ThePoint].\]

According to \autoref{lem:cone in intersection}
\[
  \bigcap_{\ThrPoint\in 
\CubeOf[\TheLength]{\ThePoint}}\LargeCone[\ThrPoint]\,\,\supset\,\,
\LargeCone_{\frac{\TheLength}{2}\sqrt{\TheDim}}[\ThePoint] \,\,\,\,\,\text{and}\,\,\,\,\,
\LargeCone_{\TheLength 
\sqrt{\TheDim}}[\ThrPoint]\,\,\subset\,\,\,
\LargeCone_{\frac{\TheLength}{2}\sqrt{\TheDim}}[\ThePoint] \quad\text{for every } 
\ThrPoint 
\in \CubeOf[\TheLength]{\ThePoint}.\]
Therefore we choose $\TheCubeDistance \geq \frac{ 
\sqrt{\TheDim}}{2\SinOf{\TheApex}} + 
\frac{\sqrt{\TheDim}}{\SinOf{\TheApex}} = 
\frac{3\sqrt{\TheDim}}{2\SinOf{\TheApex}}$. 

\end{proof}
  
  \begin{definition}
  A \notion{block} 
  \[
    \BlockOf[\TheLength]{\ThePoint}
    \,\,=\,\,
    \TheLattice
    \intersect
    \CubeOf[\TheLength]{\ThePoint}
  \]
  is a collection of lattice points
  inside a cube. The \notion{town at scale} $(\TheScale,\TheLength)$
  is the collection
  \[
    \TownOf{\TheScale,\TheLength}
    \,\,=\,\,
    \SetOf[{ \BlockOf[\TheLength]{\ThePoint} }]{ 
\ThePoint\in\TheScale\TheLattice }
  \]
  If the constant $\TheCubeDistance$ from \autoref{renormalization--apex-shrink}
  is less than $\frac{\TheScale}{\TheLength}$, we call the town
  \notion{sparsely populated} (or $\TheApex$-sparsely populated when
  we want to recall that $\TheCubeDistance$ depends on $\TheApex$).
  \end{definition}
  
  In order to employ geometric language, we implicitly may identify the block
  $\BlockOf[\TheLength]{\ThePoint}$ with its center $\ThePoint$. This way,
  we think of the distance between two blocks as the distance of their
  centers. If $\TheScale$ is large compared to $\TheLength$, the distance
  between the centers is a good approximation to any distance between points
  from the two blocks.
\begin{definition}
  Let $\TheConf\mapcolon\TheLattice\rightarrow\SetDoubleCones$ be
  a $\TheApex$-bounded configuration. We call a double cone $\TheDoubleCone 
\in \SetDoubleCones$
  \notion{favored by majority} in $\TheBlock$ for a block
  $\TheBlock\subset\TheLattice$ if the pre-image
  \(
    \TheConf{}_{\TheBlock}^{-1}(\TheDoubleCone)=
      \SetOf[
        \ThePoint\in\TheBlock
      ]{
        \TheConfOf{\ThePoint}=\TheDoubleCone
      }
  \)
  has maximal size, i.e.,
  \[
    \CardOf{\TheConf{}_{\TheBlock}^{-1}(\TheDoubleCone)}
    \,\,\geq\,\,
    \CardOf{\TheConf{}_{\TheBlock}^{-1}(\AltDoubleCone)}
    \qquad\text{for every }
    \AltDoubleCone\in\SetDoubleCones
  .\]
  \end{definition}
  
  \begin{remark}
   Given a block $\TheBlock$, the choice of a cone $\TheDoubleCone \in 
\SetDoubleCones$ that is favored by majority in $\TheBlock$, in general, is not 
unique. 
  \end{remark}

  \begin{definition}
  Given a town $\TheTown=\TownOf{\TheScale,\TheLength}$, we now define a
  directed graph as follows. The vertices are given by the blocks in 
$\TheTown$. There is an edge from a block $\TheBlock$
  to a block $\AltBlock$ if there is a cone $\TheDoubleCone \in 
\SetDoubleCones$ favored
  by majority in $\TheBlock$ with:
  \[
    \AltPoint \in \TheDoubleCone{[\ThePoint]}
    \qquad\text{for all\ }\ThePoint\in\TheBlock,\,\,\AltPoint\in\AltBlock
  \]
  We call the corresponding undirected graph the \notion{favored graph}.
\end{definition}
  
    We derive a connectivity result for the favored graph of a
  sparsely populated town from \autoref{discrete--template}.
  
  \begin{proposition}\label{renormalization--}
    For any radius $\SmallRadius>0$, there exists 
$\LargeRadius\geq \SmallRadius$ depending
    only on $\TheApex$ and $\TheDim$, such that in a $\TheApex$-sparsely
    populated town $\TheTown$ of scale $(\TheScale,\TheLength)$ any two
    blocks $\TheBlock$ and $\AltBlock$ within distance $\TheScale\SmallRadius$
    of some point $\ThrPoint\in\TheScale\TheLattice$ are connected
    by an undirected edge path in the favored graph that does not pass
    through blocks farther away from $\ThrPoint$ than $\TheScale\LargeRadius$.
  \end{proposition}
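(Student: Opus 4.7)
The plan is to reduce the proposition to \autoref{discrete--template} by a renormalization of the configuration on the block-centers $\TheScale\TheLattice$. For each lattice point $\ThePoint \in \TheScale\TheLattice$, the block $\BlockOf[\TheLength]{\ThePoint}$ has at least one cone favored by majority; fix such a choice $\TheDoubleCone^\ThePoint \in \SetDoubleCones$. Because $\TheConf$ is $\TheApex$-bounded, each $\TheDoubleCone^\ThePoint$ has apex angle at least $\TheApex$, so let $\TheCone^\ThePoint$ denote the double cone with the same axis but apex angle $\TheApex/2$. Define the renormalized configuration
$\widetilde{\TheConf}\mapcolon\TheScale\TheLattice\to\SetDoubleCones$ by $\widetilde{\TheConf}(\ThePoint) = \TheCone^\ThePoint$. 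Then $\widetilde{\TheConf}$ is $(\TheApex/2)$-bounded.

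Next, I would verify that edges in the directed graph $\DirGraph(\widetilde{\TheConf})$ built from $\widetilde{\TheConf}$ correspond to edges in the favored graph of the town. Suppose $\AltPoint - \ThePoint \in \widetilde{\TheConf}(\ThePoint) = \TheCone^\ThePoint$ for $\ThePoint,\AltPoint \in \TheScale\TheLattice$. Since the town is $\TheApex$-sparsely populated, $\NormOf{\ThePoint-\AltPoint} \geq \TheScale > \TheCubeDistanceOf{\TheApex}\TheLength$. By \autoref{renormalization--apex-shrink} (applied with the $\TheApex/2$-cone $\TheCone^\ThePoint$ and the enclosing $\TheApex$-cone $\TheDoubleCone^\ThePoint$ of the same axis), we obtain
\[
\CubeOf[\TheLength]{\AltPoint}
\,\,\subset\,\,
\bigcap_{\ThrPoint\in\CubeOf[\TheLength]{\ThePoint}} \TheDoubleCone^\ThePoint[\ThrPoint],
\]
which says exactly that for every $\ThrPoint \in \BlockOf[\TheLength]{\ThePoint}$ and every $\AltPoint' \in \BlockOf[\TheLength]{\AltPoint}$ one has $\AltPoint' \in \TheDoubleCone^\ThePoint[\ThrPoint]$. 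Hence there is an edge $\BlockOf[\TheLength]{\ThePoint} \to \BlockOf[\TheLength]{\AltPoint}$ in the favored graph.

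Finally, I would apply \autoref{discrete--template} after rescaling by $\TheScale^{-1}$, i.e., transfer $\widetilde{\TheConf}$ to a $(\TheApex/2)$-bounded configuration on $\TheLattice$ via $\ThePoint \mapsto \widetilde{\TheConf}(\TheScale\ThePoint)$; note that $\SmallRadius$-$\LargeRadius$-connectedness is scale-invariant. The corollary then provides $\LargeRadius \geq \SmallRadius$, depending only on $\SmallRadius$, $\TheApex/2$ and $\TheDim$ (hence on $\SmallRadius$, $\TheApex$ and $\TheDim$), such that any $\ThrPoint \in \TheScale\TheLattice$ is $\SmallRadius$-$\LargeRadius$-connected in $\DirGraph(\widetilde{\TheConf})$ in the rescaled lattice, that is, any two points of $\TheScale\TheLattice$ within distance $\TheScale\SmallRadius$ of $\ThrPoint$ are joined by an undirected edge path staying in $\BallOf[\TheScale\LargeRadius]{\ThrPoint}$. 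By the edge correspondence above, this path is simultaneously an edge path in the favored graph whose blocks lie within distance $\TheScale\LargeRadius$ of $\ThrPoint$, yielding the claim.

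The main obstacle is the edge correspondence: one must align the shrinkage of the apex angle (from $\TheApex$ to $\TheApex/2$), the sparse population hypothesis, and the geometry of \autoref{renormalization--apex-shrink} so that a $\widetilde{\TheConf}$-edge between block-centers automatically promotes to an edge in the favored graph, valid uniformly over all points in the two blocks. Once this is in place, \autoref{discrete--template} supplies connectivity essentially for free, and the radius $\LargeRadius$ inherits dependence only on $\SmallRadius$, $\TheApex$, and $\TheDim$.
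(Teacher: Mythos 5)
Your proposal is correct and follows essentially the same route as the paper: renormalize to a configuration on the block-center lattice by taking the axis of a cone favored by majority and halving the apex angle, invoke \autoref{renormalization--apex-shrink} together with the sparse-population hypothesis to upgrade $\widetilde{\TheConf}$-edges to favored-graph edges, and then apply \autoref{discrete--template} after rescaling. Your writeup is somewhat more explicit than the paper's (in particular in checking $\NormOf{\ThePoint-\AltPoint}\geq\TheScale>\TheCubeDistance\TheLength$ and in noting the scale-invariance of $\SmallRadius$-$\LargeRadius$-connectedness), but the argument is the same.
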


\begin{proof}
  Let $r>0$ and $\TheTown=\TheTown(\TheScale,\TheLength)$ be a sparsely 
populated town. Let $\TheBlock,\AltBlock \in 
\TheTown=\TheTown(\TheScale,\TheLength)$ be two blocks within distance $hr$ of 
some point $\ThrPoint \in \TheScale\TheLattice$.
    Denote by $\TownConfOf{\TheBlock}\in\SetDoubleCones$ one of the cones that 
are favored by majority in $\TheBlock$. 
  Let us show the existence of a path in the favored graph that connects 
$\TheBlock$ and $\AltBlock$ and does not leave the ball $B_{\TheScale 
\LargeRadius}(\ThrPoint)$.
  In order to invoke \autoref{discrete--template},  
 note that
  \begin{align*}
    \TheLattice &\longrightarrow\TownOf{\TheScale,\TheLength} \\
    \ThePoint & \mapsto \BlockOf[\TheLength]{\TheScale\ThePoint}
  \end{align*}
  provides an identification of the town $\TheTown$ with the integer lattice
  $\TheLattice$. Denoting by $\TownConfOf[\frac12]{\TheBlock}$ the double
  cone with apex $\frac{\TheApex}{2}$ and the same axis as 
$\TownConfOf{\TheBlock}$,
  let us consider the following configuration:
  \begin{align*}
    \TheLattice &\longrightarrow\SetDoubleCones[\frac{\TheApex}{2}] \\
    \ThePoint & \mapsto 
\TownConfOf[\frac12]{\BlockOf[\TheLength]{\TheScale\ThePoint}}.
  \end{align*}
  If there is an edge from $\ThePoint$ to $\AltPoint$ in this configuration,
  then by \autoref{renormalization--apex-shrink}, there is an edge from the 
block $\BlockOf[\TheLength]{\TheScale\ThePoint}$
  to the block $\BlockOf[\TheLength]{\TheScale\AltPoint}$ in the favored 
graph. 
  Choose $\ThePoint,\AltPoint \in \TheLattice$ so that $P=Q_\ell(h\ThePoint)$ 
and $Q=Q_\ell(h\AltPoint)$.  \\
Now the claim follows from \autoref{discrete--template}.
  \end{proof}

\subsection{Connecting Points at Scale}
  From \autoref{discrete--template} it is clear that, for any
  configuration
  \(
    \TheConf\mapcolon\TheLattice\rightarrow\SetDoubleCones
  \)
  with apex angles bounded away from $0$,
  the associated directed graph $\DirGraph=\DirGraphOf{\TheConf}$ is
  connected when considered as an undirected graph. Thus, there is a set of 
paths, which is large enough to connect any given pair $\ThePoint, \AltPoint$.
  The aim of this section is to prove quantitative estimates on the length of 
paths
  and the number of edges. The following contains our main result in this 
direction. As shown below, it implies \autoref{theo:main_discrete} quite 
directly:
  \begin{theorem}\label{theo:path-props}
    Let
    \(
      \TheConf\mapcolon\TheLattice\rightarrow\SetDoubleCones
    \)
    be a configuration with apex angles bounded from below by $\TheApex>0$.
    Let $\minDist>0$. There exist positive numbers $\BdNumJumps$ and 
$\BdEdgeUsage$ and a constant $\StepComparability\geq \minDist$, all 
independent 
of $\TheConf$, and a collection
    \(
      \FamOf[{
        \ThePath[\ThePoint\AltPoint]
      }]{
        \ThePoint,\AltPoint\in\TheLattice
      }
    \)
    of unoriented edge paths in $\DirGraph$ such that the following holds:
    \begin{enumerate}
      \item
        The path $\ThePath[\ThePoint\AltPoint]$ starts at $\ThePoint$
        and ends at $\AltPoint$.
      \item
        Any path $\ThePath[\ThePoint\AltPoint]$ has at most
        $\BdNumJumps$ edges.
      \item
        Any edge of $\DirGraph$ is used in at most $\BdEdgeUsage$
        paths $\ThePath[\ThePoint\AltPoint]$.
      \item 
      Any edge in $\ThePath[\ThePoint\AltPoint]$ has length comparable to 
$\DistOf{\ThePoint}{\AltPoint}$ with constant $\StepComparability$.
    
    \end{enumerate}
  \end{theorem}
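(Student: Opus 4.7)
The plan is to build paths at a scale comparable to $\DistOf{\ThePoint}{\AltPoint}$ using the renormalization in \autoref{renormalization--}. Fix $\TheScaleStep > \TheCubeDistanceOf{\InfApex}$ so that every town is sparsely populated. For a pair $(\ThePoint,\AltPoint) \in \TheLattice \times \TheLattice$ with $\DistOf{\ThePoint}{\AltPoint} > \minDist$, let $\LogScale$ be the integer with $\TheScaleStep^{\LogScale} \le \DistOf{\ThePoint}{\AltPoint} < \TheScaleStep^{\LogScale+1}$, and work in the town $\TownOf{\TheScaleStep^{\LogScale},\TheScaleStep^{\LogScale-1}}$. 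The blocks $\TheBlock_{\ThePoint}, \TheBlock_{\AltPoint}$ containing $\ThePoint$ and $\AltPoint$ lie within bounded-ratio distance at scale $\TheScaleStep^{\LogScale}$, so \autoref{renormalization--} yields a favored-graph edge path $\TheBlock_{\ThePoint} = \TheBlock_0, \TheBlock_1, \ldots, \TheBlock_k = \TheBlock_{\AltPoint}$ of uniformly bounded length $k$, staying inside a ball of fixed radius at scale $\TheScaleStep^{\LogScale}$.

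Each favored-graph edge $\TheBlock_i \to \TheBlock_{i+1}$ is witnessed by a cone $\TheDoubleCone_i$ favored by majority in $\TheBlock_i$ with $\TheBlock_{i+1} \subset \TheBlock_i + \TheDoubleCone_i$; hence every lattice point in $\TheBlock_i$ of cone type $\TheDoubleCone_i$ (a set of size at least $\CardOf{\TheBlock_i}/L$) has a $\DirGraph$-edge to every lattice point of $\TheBlock_{i+1}$. The lift selects intermediate lattice points $\ThrPoint_i \in \TheBlock_i$ of favored type for $1 \le i \le k-1$ so that, with the end-blocks fixed, the assignment
\[
  (\ThePoint,\AltPoint) \,\mapsto\, (\ThrPoint_1, \ldots, \ThrPoint_{k-1})
\]
is injective. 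Such an assignment exists once the combinatorial room $(L^{-1}\CardOf{\TheBlock})^{k-1}$ dominates the pair count $\CardOf{\TheBlock}^{2}$, which holds as soon as $k-1 \ge 3$; the favored path from \autoref{renormalization--} may be inflated to this length by inserting harmless round-trips, and the bounded-scale case where $\TheBlock$ contains only a few lattice points is dispatched by direct enumeration. The path $\ThePath[\ThePoint\AltPoint]$ is then $\ThePoint \to \ThrPoint_1 \to \cdots \to \ThrPoint_{k-1} \to \AltPoint$, with at most two auxiliary edges prepended or appended via \autoref{discr.ueber-bande} to handle the case that $\ThePoint$ or $\AltPoint$ fails to be of favored type in its block.

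Properties (1) and (2) are immediate, with $\BdNumJumps$ bounded by $k+4$. Property (4) follows because consecutive intermediate points lie in distinct town-blocks at mutual distance at least $\TheScaleStep^{\LogScale} - \SqrtOf{\TheDim}\,\TheScaleStep^{\LogScale-1} \asymp \TheScaleStep^{\LogScale}$, while all points of the path lie in a ball of radius $O(\TheScaleStep^{\LogScale})$; both bounds are comparable to $\DistOf{\ThePoint}{\AltPoint}$ with constants depending only on $\TheScaleStep$, $\TheDim$ and the radius bound in \autoref{renormalization--}.

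The main obstacle is property (3). A given edge of $\DirGraph$ used in some $\ThePath[\ThePoint\AltPoint]$ has length comparable to $\TheScaleStep^{\LogScale}$, restricting its participation to pairs at this single scale. By the locality in \autoref{renormalization--}, only finitely many block-pairs at scale $\LogScale$ can have a favored-graph path passing through a given edge, since both end-blocks must lie in a fixed-radius neighborhood of it. For each such block-pair, injectivity of the assignment forces each lifted edge to be used by at most one source pair. Multiplying these bounded counts yields the universal constant $\BdEdgeUsage$ in terms of $\InfApex$, $\TheDim$, and $\minDist$.
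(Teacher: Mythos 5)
Your construction breaks down at the decisive point, property (3). Injectivity of the assignment $(\ThePoint,\AltPoint)\mapsto(\ThrPoint[1],\dots,\ThrPoint[k-1])$ only guarantees that the full tuples attached to distinct pairs differ; it does not prevent many distinct tuples from sharing one and the same consecutive pair $(\ThrPoint[i],\ThrPoint[{i+1}])$, i.e.\ one and the same edge of $\DirGraph$. Quantitatively: under your counting condition $(L^{-1}\CardOf{\TheBlock})^{k-1}\geq \CardOf{\TheBlock}^{2}$ with $k-1\geq 3$, fixing two consecutive coordinates still leaves of order $(L^{-1}\CardOf{\TheBlock})^{k-3}$ admissible tuples, and an arbitrary injective assignment may well send that many source pairs to tuples containing the fixed edge; since $\CardOf{\TheBlock}\asymp\TheScaleStep^{d(\LogScale-1)}$ grows with the scale, no scale-independent bound $\BdEdgeUsage$ follows. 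What is needed is not injectivity of a tuple map but an a priori family of lifted paths that are essentially edge-disjoint, together with a bounded-to-one assignment of pairs to whole paths. This is precisely the mechanism in the paper's proof: for each block of the block path one keeps the entire majority set, forms the explicit ``staircase'' family \eqref{constrpathes} of $a^{2}$ paths, in which an edge essentially determines the indices of the path using it, and then maps the $\asymp a^{2}$ relevant pairs $(\ThePoint,\AltPoint)$ onto this family by a map $\phi_{\ThrPoint}$ whose fibres have size at most a constant $K$. Bounded edge multiplicity is then inherited edge by edge; your argument is missing this idea, and it is the heart of the theorem.

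A second gap concerns the endpoints. If $\ThePoint$ is not of the favored type in its block, you attach ``at most two auxiliary edges via \autoref{discr.ueber-bande}'', but that lemma has a hypothesis (a lattice point of the partner's cone type inside a prescribed cone at $\ThePoint$) which need not hold; without it, joining $\ThePoint$ to the favored points of a nearby block may require detours with an uncontrolled number of edges, endangering (2) and the lower edge-length bound needed for (4). The paper avoids this through \autoref{connect--first-jump}: the cone $\ThePoint+\TheConfOf{\ThePoint}$ contains an entire block at the current scale, so $\ThePoint$ reaches every point of that block by a single edge; and because the block path of Step~1 is a tour through all blocks of the ball (rather than a path only between $\TheBlock[\ThePoint]$ and $\TheBlock[\AltPoint]$), that block automatically lies on the path. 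Your per-pair favored path has no reason to pass through the block sitting in $\ThePoint$'s cone, so this part of the construction also has to be reorganized along the paper's lines.
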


Let us provide the setup of the proof of \autoref{theo:path-props}. We pick an
even integer $\TheScaleStep$ larger than the constant $\max(\TheCubeDistance, 
\minDist)$ and with the property that $\TheScaleStep/L \in \N$, where 
$\TheCubeDistance$ is as in
  \autoref{renormalization--apex-shrink} and $L$ is as in 
\autoref{lem:ref-cones}. 
Hence, the towns
  $\TheTown[\LogScale] =
  \TownOf{\TheScaleStep^{\LogScale},\TheScaleStep^{\LogScale-1}}$ are all
  $\TheApex$-sparsely populated so that \autoref{renormalization--}
  applies. The distance $\DistOf{\ThePoint}{\AltPoint}$ lies in exactly one
  of the intervals
  \(
    [\TheScaleStep^{0},\TheScaleStep^{1})
  \),
  \(
    [\TheScaleStep^{1},\TheScaleStep^{2})
  \),
  \(
    [\TheScaleStep^{2},\TheScaleStep^{3})
  \), etc., say:
  \(
    \DistOf{\ThePoint}{\AltPoint}
    \in
    [\TheScaleStep^{\LogScale-1},\TheScaleStep^{\LogScale})
  \).
  In this case, we will consider $\TheTown[\LogScale]$ to be the appropriate
  town for connecting $\ThePoint$ and $\AltPoint$. We call $\LogScale$ the
  \notion{logarithmic scale} of the town $\TheTown[\LogScale]$. Let $L$ denote 
the same constant as in \autoref{lem:ref-cones}.
  Assume that $\#\Gamma(\Z^d)\leq L$. Since $\TheScaleStep$ is an even
  integer, each block 
$\TheBlock=\BlockOf[\TheScaleStep^{\LogScale-1}]{\ThePoint}$
  contains at least
  \(
    \frac{\TheScaleStep^{\TheDim(\LogScale-1)}}{L}
  \)
  lattice points $\ThrPoint\in\TheBlock$ where the associated cone
  $\TheConfOf{\ThrPoint}$ is favored by majority in $\TheBlock$. 
  
  An important step in the construction of $\ThePath[\ThePoint\AltPoint]$ is to 
  connect $\ThePoint$ and $\AltPoint$ to blocks of
  $\TheTown[\LogScale]$. The following lemma deals with this problem.

  \begin{lemma}\label{connect--first-jump}
    There is a constant $\FirstJump\geq 1$ such that for
    any point $\ThePoint \in \TheSpace$ and any $n \in \mathbbm{N}$ there is a 
    block $\TheBlock\in\TheTown[\LogScale]$
    entirely contained in
    \(
      \BallOf[\TheScaleStep^{\LogScale}\FirstJump]{\ThePoint}
      \intersect
      \ThePoint+\TheConfOf{\ThePoint}
    \).
  \end{lemma}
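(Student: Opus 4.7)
The plan is to reduce the lemma to a rescaling of Lemma~\ref{lem:discrObservations}(\ref{discr.ensure-lattice-point}), applied on the coarser lattice $\TheScaleStep^{\LogScale}\TheLattice$ whose points serve as the centres of the blocks in $\TheTown[\LogScale]$. The only geometric input needed is that $\TheConfOf{\ThePoint}$ is a double cone with apex angle at least $\TheApex>0$, so that at a distance $\tau$ from $\ThePoint$ along the cone's axis one has room for an inscribed ball of radius of order $\tau\,\SinOf{\TheApex}$.

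Concretely, I set $\rho = \tfrac{1}{2}\SqrtOf{\TheDim}\,\TheScaleStep^{\LogScale-1}$, the circumradius of a block. Since $\BlockOf[\TheScaleStep^{\LogScale-1}]{\AltPoint} \subset \BallOf[\rho]{\AltPoint}$ for every $\AltPoint$, it suffices to locate a centre $\AltPoint \in \TheScaleStep^{\LogScale}\TheLattice$ with
\[
  \BallOf[\rho]{\AltPoint} \,\,\subset\,\, \ThePoint + \TheConfOf{\ThePoint}
\]
and with $\DistOf{\AltPoint}{\ThePoint}$ controlled by a constant multiple of $\TheScaleStep^{\LogScale}$. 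Rescaling Lemma~\ref{lem:discrObservations}(\ref{discr.ensure-lattice-point}) from $\TheLattice$ to the coarser lattice $\TheScaleStep^{\LogScale}\TheLattice$ — using the observation that every closed ball of radius $\tfrac{1}{2}\SqrtOf{\TheDim}\,\TheScaleStep^{\LogScale}$ meets $\TheScaleStep^{\LogScale}\TheLattice$ — produces such an $\AltPoint$ inside $\ThePoint+\TheConfOf{\ThePoint}$ at distance at most
\[
  R \,\,=\,\, \frac{\rho + \SqrtOf{\TheDim}\,\TheScaleStep^{\LogScale}}{\SinOf{\TheApex}}
\]
from $\ThePoint$. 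Because $\TheScaleStep \geq 1$, this quantity is at most $\TheScaleStep^{\LogScale}$ times a constant depending only on $\TheDim$ and $\TheApex$.

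Every point of $\BlockOf[\TheScaleStep^{\LogScale-1}]{\AltPoint}$ then lies within distance $R+\rho$ of $\ThePoint$, which is again a constant multiple of $\TheScaleStep^{\LogScale}$. Taking $\FirstJump$ to be this constant yields the assertion. I anticipate no real obstacle: the statement is essentially a rescaling of the lattice-in-cone estimate already established in Lemma~\ref{lem:discrObservations}(\ref{discr.ensure-lattice-point}), together with the trivial remark that a cube of side $\TheScaleStep^{\LogScale-1}$ sits inside its circumscribed ball of radius $\rho$. The only subtlety is to track carefully that the relevant lattice is the coarse one of block centres rather than $\TheLattice$ itself, so that the constant $\SqrtOf{\TheDim}$ in the original estimate is replaced by $\SqrtOf{\TheDim}\,\TheScaleStep^{\LogScale}$.
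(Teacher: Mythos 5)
Your proof is correct, and it achieves the same conclusion by a slightly different route than the paper. The paper's argument finds a $\TheLattice$-lattice point $\AltPoint$ at distance $\le \TheRadius$ from $\TheScaleStep^{-\LogScale}\ThePoint$ inside a cone of apex angle $\TheApex/2$ aligned with $\TheConfOf{\ThePoint}$, and then invokes \autoref{renormalization--apex-shrink} at scale $\TheScaleStep^{\LogScale}$ to upgrade the single point $\TheScaleStep^{\LogScale}\AltPoint$ to the whole block around it sitting inside the full $\TheApex$-cone, hence inside $\TheConfOf{\ThePoint}$. You bypass the apex-shrinking lemma and instead rescale \autoref{lem:discrObservations}~(\ref{discr.ensure-lattice-point}) directly to the coarse lattice $\TheScaleStep^{\LogScale}\TheLattice$, asking immediately for a block center $\AltPoint$ whose circumscribed ball $\BallOf[\rho]{\AltPoint}$ with $\rho=\tfrac12\SqrtOf{\TheDim}\,\TheScaleStep^{\LogScale-1}$ lies in $\ThePoint+\TheConfOf{\ThePoint}$. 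The two routes are cousins — both rest on the fact that a cone of apex angle $\ge\TheApex$ contains a lattice point of the relevant lattice with a comparably sized buffer ball — and both produce a constant $\FirstJump$ of the same order in $\TheDim$ and $\frac{1}{\SinOf{\TheApex}}$. A small advantage of your version: it does not need to verify the distance hypothesis $\DistOf{\ThePoint}{\AltPoint}\geq\TheCubeDistance\TheLength$ required by \autoref{renormalization--apex-shrink}, a condition the paper's proof leaves implicit; the buffer-ball requirement $\BallOf[\rho]{\AltPoint}\subset\ThePoint+\TheConfOf{\ThePoint}$ automatically keeps $\AltPoint$ far enough from the apex.
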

  \begin{proof}
    There is a radius $\TheRadius$ such that for any cone $\TheCone$ of
    apex at least $\frac{\TheApex}{2}$ and each point $\ThrPoint\in\TheSpace$,
    the intersection $\BallOf[\TheRadius]{\ThrPoint}\intersect 
\TheCone[\ThrPoint]$
    contains a lattice point $\AltPoint$. Now the claim follows by rescaling 
from
    \autoref{renormalization--apex-shrink} applied to 
    $\TheScaleStep^{\LogScale} \ThrPoint$ and $\TheScaleStep^{\LogScale} 
\AltPoint$. As we want to encircle the whole
    block and not just its center, we choose $\FirstJump > 
\TheRadius+\sqrt{\TheDim}$.
\end{proof}
  
  Now we are in the position to prove the main result of this section. 
  
\begin{proof}[Proof of \autoref{theo:path-props}]
  Let $\FirstJump$ be the radius from~\autoref{connect--first-jump}, put
  $r=2\sqrt{d}+R_1$, and let $\LargeRadius$ be the radius resulting with this
  value from~\autoref{renormalization--}.
  The proof consists of several steps.

  \emph{Step 1: Construction of paths in the favored graph for a fixed scale.}  
We fix some logarithmic scale $\LogScale$. For every $\ThrPoint \in 
\TheScaleLog 
\TheLattice$, we construct a path $\mathcal{P}^{\LogScale}_{\ThrPoint}$ in the 
favored graph that traverses every block of $\TheTown[\LogScale]$ that is a 
subset of $B_{\TheScaleLog \SmallRadius}(\ThrPoint)$. By taking the union 
$\bigcup_{\ThrPoint \in \TheScaleLog \TheLattice} 
\mathcal{P}^{\LogScale}_{\ThrPoint}$ we construct paths in the favored graph 
for a fixed scale. Let $\ThrPoint \in \TheScaleLog \TheLattice$.
  \autoref{renormalization--} allows us to connect every block $Q \in 
\TheTown[\LogScale], Q\subset B_{\TheScaleLog\SmallRadius}(\ThrPoint)$ with 
every other block $P \in \TheTown[\LogScale], P \subset 
B_{\TheScaleLog\SmallRadius}(\ThrPoint)$ so that the corresponding path 
traverses not more than $\#(B_{\LargeRadius} \cap \Z^d) \asymp R^d$ blocks of 
$\TheTown[\LogScale]$, which can be chosen to lie in $\BBB_{\TheScaleLog 
\LargeRadius}$.
  If we apply \autoref{renormalization--} successively to all blocks of 
$\TheTown[\LogScale]$, which are subsets of $B_{\TheScaleLog \SmallRadius}$, 
then we obtain a path  

\begin{align}\label{bigpath} 
\mathcal{P}^{\LogScale}_{\ThrPoint} \,\,=\,\, Q^1 - Q^2 
- ... -Q^t 
\end{align}

in the favored graph with 
\[ 
r^d \,\,\asymp\,\,
\#(B_{\TheScaleLog \SmallRadius} \cap \TheLattice) \,\,\leq\,\,
t\,\,\leq\,\,  
\#(B_{\TheScaleLog \SmallRadius} \cap \TheLattice)(B_{\TheScaleLog 
\LargeRadius} 
\cap \TheLattice) \,\,\asymp\,\, r^d R^d 
\]
 of blocks of $\TheTown[\LogScale]$ in the favored graph such that the 
following holds:

\begin{enumerate}
  \item For each $i \in \{1,...,t\}$ we have $Q^i \subset B_{\TheScaleLog 
R}(z)$,
  \item The blocks $Q^1$ and $Q^t$ are subsets of $B_{\TheScaleLog r}(z)$,
  \item If $Q$ is any block of $\TheTown[\LogScale]$ 
    with $Q\subset B_{\TheScaleLog r}(z)$, then $Q=Q^i$ for some $i\in 
\{1,...,t\}$. 
\end{enumerate} 
   \begin{figure}[ht]
  \centering
    \scalebox{.44}{\input{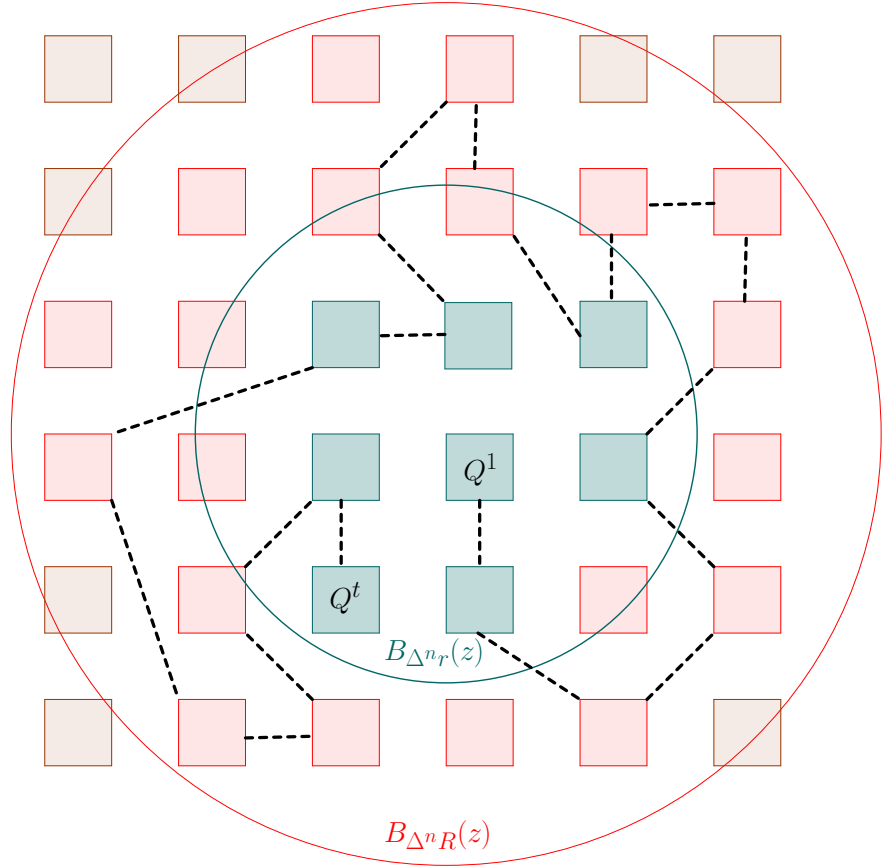}}
  \caption{The path $Q_1-....-Q_t$}
  \end{figure}
  Finally, set 
  $\mathcal{P}^{\LogScale}=\bigcup_{\ThrPoint \in \TheScaleLog \TheLattice} 
\mathcal{P}^{\LogScale}_{\ThrPoint}$.
  
\emph{Step 2: Construction of paths in the graph $G$ for a fixed scale.}
 For a logarithmic scale $\LogScale$ and $\ThePoint, \AltPoint \in 
[\TheScaleLogmin,\TheScaleLog)$ we construct a path in the graph $G$ connecting 
$\ThePoint$ and $\AltPoint$. 
     
  Fix a logarithmic scale $\LogScale$. Let $z \in \Delta^n \TheLattice$.
  Choose for every block in \eqref{bigpath} a favored cone and call the 
corresponding set of points in the block where this cone is associated 
\emph{majority set}. Each majority set contains at least 
  \[ a\,\,=\,\, \frac{\Delta^{d(n-1)}}{L} \,\,\in\,\N \]
  points. WLOG we assume that every majority set contains exactly $a$ different 
elements. Then we identify a block in \eqref{bigpath} with its majority set, 
i.e., if $Q^k$ is the $k$-th block in \eqref{bigpath}, then
  \[ Q^k\,\,=\,\,(q^k_i)_{1\leq i \leq a}.\]

  Starting from \eqref{bigpath} we now fix certain paths in the graph $G$, 
which then give rise to the collection $(p_{xy})$.
  Let $i \in \{1,...,a\}$. WLOG we assume that $t$ is an even number (for odd 
$t$ just erase the last edge in the following scheme). Let $M$ be the set of 
the 
following paths in $G$:
  \begin{align}\label{constrpathes} \begin{matrix}
  q^1_i &-	&q^{2}_i 	&- 	&q^{3}_i	&-	&q^{4}_i	
&-	&\dots	&-	&q^{t}_i \\
  q^1_i &-	&q^{2}_{i+1}	&-	&q^{3}_{i}	&-	&q^{4}_{i+1}	
&-	&\dots 	&- 	&q^{t}_{i+1}  \\
  q^1_i &-	&q^{2}_{i+2}	&-	&q^{3}_{i}	&-	&q^{4}_{i+2} 	
&-	&\dots 	&- 	&q^{t}_{i+2} \\
  q^1_i &- 	&q^{2}_{i+3} 	&- 	&q^{3}_{i} 	&-	&q^{4}_{i+3} 	
&-  	&\dots	&- 	&q^{t}_{i+3} \\
  \vdots&        &\vdots		&	&\vdots		&\	&\vdots		
&\	& \vdots&\	&\vdots \\	
  q^1_i &- 	&q^{2}_{i+a-1} 	&- 	&q^{3}_i 	&- 	&q^{4}_{i+a-1} 	
&-  	&\dots 	&- 	&q^{t}_{i+a-1}.
  \end{matrix} \end{align}
  
 Here, the lower index is to be read modulo $a$, i.e., $k+a=k$ for every 
$k$. Since we do this for every $i \in \{1,...,a\}$, the set $M$ consists of 
$a^2$ paths. Now we associate to every pair $(\ThePoint, \AltPoint)\in$ 
\[ 
  A\,\,=\,\,
  \{\,
    (x,y) \in B_{\TheScaleLog 2\sqrt{\TheDim} }(z) \times B_{\TheScaleLog 2\sqrt{\TheDim} }(z)
    \,\,|\,\,
    |x-y|\in [\TheScaleLogmin, \TheScaleLog)
  \,\}
\] 
one path of $M$. Since the numbers $a$ and $\#A$ are comparable, i.e., its 
ratio is bounded by a number independent of $\LogScale$, this can be realized 
by 
a function 
\[
  \phi_{\ThrPoint}: A \to M
\]
with
\[
  \# \phi_{\ThrPoint}^{-1}(p)
  \,\,\leq\,\, K 
  \quad\text{for every } p \in M
\] 
where $K\geq 1$ is independent of $\LogScale$ and $p$. 
In order to use the path $\phi_{\ThrPoint}(\ThePoint,\AltPoint)$ to connect 
$\ThePoint$ and $\AltPoint$ it remains to make sure that $\ThePoint$ and 
$\AltPoint$ are both connected in $\DirGraph$ to one element in 
$\phi_{\ThrPoint}(\ThePoint,\AltPoint)$ respectively. This follows from 
\autoref{connect--first-jump} which guarantees that    
   every $\ThePoint \in B_{\TheScaleLog 2\sqrt{\TheDim}}(\ThrPoint)$ is 
connected to every point in some block $Q^k$ of 
$\mathcal{P}^{\LogScale}_{\ThrPoint}$. 
   In this way the path $\phi_{\ThrPoint}(\ThePoint, \AltPoint)$ induces a path 
in $\DirGraph$ that starts in $\ThePoint$ and ends in $\AltPoint$ (cf. 
\autoref{fig:construction-paths}). 
  
\begin{figure}[ht]
\centering
   \ProvidesFile{fig--construction-paths.tex}
\begingroup
\definecolor{ffqqqq}{rgb}{1,0,0}
\definecolor{qqttcc}{rgb}{0,0.2,0.8}
\definecolor{qqqqff}{rgb}{0,0,1}
\definecolor{zzttqq}{rgb}{0.6,0.2,0}
\begin{tikzpicture}[line cap=round,line join=round,>=triangle 45,x=1.0cm,y=1.0cm]
\clip(1.83,-4.13) rectangle (13.71,4.11);
\fill[color=zzttqq,fill=zzttqq,fill opacity=0.1] (6,3) -- (6,4) -- (7,4) -- (7,3) -- cycle;
\fill[color=zzttqq,fill=zzttqq,fill opacity=0.1] (8,2) -- (8,3) -- (9,3) -- (9,2) -- cycle;
\fill[color=zzttqq,fill=zzttqq,fill opacity=0.1] (10,1) -- (10,2) -- (11,2) -- (11,1) -- cycle;
\fill[color=zzttqq,fill=zzttqq,fill opacity=0.1] (10,-2) -- (10,-1) -- (11,-1) -- (11,-2) -- cycle;
\fill[color=zzttqq,fill=zzttqq,fill opacity=0.1] (8,-3) -- (8,-2) -- (9,-2) -- (9,-3) -- cycle;
\fill[color=zzttqq,fill=zzttqq,fill opacity=0.1] (6,-4) -- (6,-3) -- (7,-3) -- (7,-4) -- cycle;
\draw [color=zzttqq] (6,3)-- (6,4);
\draw [color=zzttqq] (6,4)-- (7,4);
\draw [color=zzttqq] (7,4)-- (7,3);
\draw [color=zzttqq] (7,3)-- (6,3);
\draw [color=zzttqq] (8,2)-- (8,3);
\draw [color=zzttqq] (8,3)-- (9,3);
\draw [color=zzttqq] (9,3)-- (9,2);
\draw [color=zzttqq] (9,2)-- (8,2);
\draw [color=zzttqq] (10,1)-- (10,2);
\draw [color=zzttqq] (10,2)-- (11,2);
\draw [color=zzttqq] (11,2)-- (11,1);
\draw [color=zzttqq] (11,1)-- (10,1);
\draw [color=zzttqq] (10,-2)-- (10,-1);
\draw [color=zzttqq] (10,-1)-- (11,-1);
\draw [color=zzttqq] (11,-1)-- (11,-2);
\draw [color=zzttqq] (11,-2)-- (10,-2);
\draw [color=zzttqq] (8,-3)-- (8,-2);
\draw [color=zzttqq] (8,-2)-- (9,-2);
\draw [color=zzttqq] (9,-2)-- (9,-3);
\draw [color=zzttqq] (9,-3)-- (8,-3);
\draw [color=zzttqq] (6,-4)-- (6,-3);
\draw [color=zzttqq] (6,-3)-- (7,-3);
\draw [color=zzttqq] (7,-3)-- (7,-4);
\draw [color=zzttqq] (7,-4)-- (6,-4);
\draw [line width=1.5pt,dash pattern=on 2pt off 2pt,color=qqttcc] (6.32,3.55)-- (8.48,2.15);
\draw [line width=1.5pt,dash pattern=on 2pt off 2pt,color=qqttcc] (8.48,2.15)-- (10.8,1.84);
\draw [line width=1.5pt,dash pattern=on 2pt off 2pt,color=qqttcc] (10.7,-1.76)-- (8.35,-2.66);
\draw [line width=1.5pt,dash pattern=on 2pt off 2pt,color=qqttcc] (8.35,-2.66)-- (6.48,-3.54);
\draw [color=qqttcc](10.41,0.57) node[anchor=north west] {\huge{$\vdots$}};
\draw [line width=1.5pt,dash pattern=on 2pt off 2pt,color=qqttcc] (10.8,1.84)-- (10.69,0.46);
\draw [line width=1.5pt,dash pattern=on 2pt off 2pt,color=qqttcc] (10.67,-0.6)-- (10.7,-1.76);
\draw(4,0) circle (2cm);
\draw [line width=2pt,dash pattern=on 4pt off 4pt,color=ffqqqq] (3.19,1.12)-- (8.48,2.15);
\draw [line width=2pt,dash pattern=on 4pt off 4pt,color=ffqqqq] (4.76,-1.18)-- (8.35,-2.66);
\draw [color=qqttcc](10.87,0.56) node[anchor=north west] {\large{$\phi_{\ThrPoint}(\ThePoint,\AltPoint)$}};
\draw [color=black](2.3,-0.5) node[anchor=north west] {\large{$ B_{\TheScaleLog 2\sqrt{\TheDim}}(\ThrPoint) $}};
\begin{scriptsize}
\fill [color=qqqqff] (6.32,3.55) circle (2pt);
\draw[color=qqqqff] (6.77,3.69) node {\large{$q_{1}^{1}$}};
\fill [color=qqqqff] (8.48,2.15) circle (2pt);
\draw[color=qqqqff] (8.64,2.5) node {\large{$q_{2}^{2}$}};
\fill [color=qqqqff] (10.8,1.84) circle (2pt);
\draw[color=qqqqff] (10.46,1.49) node {\large{$q^{3}_{1}$}};
\fill [color=qqqqff] (10.7,-1.76) circle (2pt);
\draw[color=qqqqff] (10.9,-2.29) node {\large{$q^{t-2}_{2}$}};
\fill [color=qqqqff] (8.35,-2.66) circle (2pt);
\draw[color=qqqqff] (8.45,-2.3) node {\large{$q^{t-1}_{1}$}};
\fill [color=qqqqff] (6.48,-3.54) circle (2pt);
\draw[color=qqqqff] (6.68,-3.73) node {\large{$q^{t}_{2}$}};
\fill [color=ffqqqq] (3.19,1.12) circle (2.5pt);
\draw[color=ffqqqq] (3.32,1.45) node {\large{$\ThePoint$}};
\fill [color=ffqqqq] (4.76,-1.18) circle (2.5pt);
\draw[color=ffqqqq] (4.92,-0.85) node {\large{$\AltPoint$}};
\end{scriptsize}
\end{tikzpicture}
\endgroup
 
\caption{Construction of a path using 
$\phi_z(x,y)$}\label{fig:construction-paths}
\end{figure}
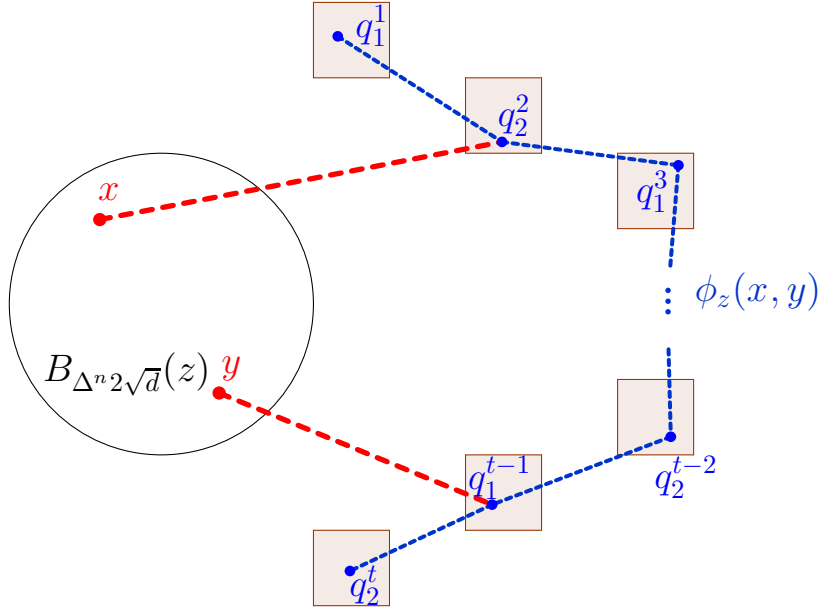
   Using this construction scheme, we have constructed a path for each pair 
$(x,y) \in B_{2\sqrt{\TheDim }\TheScaleLog }(z) \times 
B_{2\sqrt{\TheDim}\TheScaleLog }(z)$ with $|x-y|\in 
[\TheScaleLogmin,\TheScaleLog)$. Let $M_{\ThrPoint}^{\LogScale}$ be the set of 
all these paths. 
   This principle of construction of the paths can be carried out for every $z 
\in \TheScaleLog \Z^d$. 
  
\emph{Step 3: Construction of $p_{\ThePoint \AltPoint}$.}
   Note that the whole construction process of step $2$ has been performed for 
an arbitrary $\LogScale\in \N$.
   We define $p_{\ThePoint \AltPoint}$ for $\ThePoint,\AltPoint \in 
\TheLattice$ 
as follows. Choose $\LogScale \in \N$ such that $\DistOf{\ThePoint}{\AltPoint} 
\in [\TheScaleLogmin, \TheScaleLog)$. Next, choose any $\ThrPoint \in 
\TheLattice$ such that $\phi_{\ThrPoint}(\ThePoint,\AltPoint)$ represents a 
path 
connecting $\ThePoint$ and $\AltPoint$. In this way, $p_{\ThePoint \AltPoint} 
\in  \bigcup_{\ThrPoint\in \TheScaleLog\Z^d} M_{\ThrPoint}^{\LogScale}$. 

\emph{Step 4: Bounds of the length of each edge path.} 
   The second claim of \autoref{theo:path-props} follows immediately from 
   $t \leq \#(B_r\cap \Z^d) \cdot \#(B_R\cap \Z^d)$.
    
\emph{Step 5: Bounds of the length of each edge.}
     By construction, all edges used in $\ThePath[\ThePoint\AltPoint]$ for some 
$\ThePoint, \AltPoint \in \TheLattice$ with $\DistOf{\ThePoint}{\AltPoint} \in 
[\TheScaleStep^{\LogScale-1}, \TheScaleStep^{\LogScale})$ have lengths bounded 
from
    below by $\TheScaleStep^{\LogScale-1}$ and from above by 
$2\TheScaleStep^{\LogScale}\LargeRadius$. 
    Ergo the fourth claim follows with $\StepComparability=2R\TheScaleStep$.
    
\emph{Step 6: Bounds of the multiplicity of edges.}
     According to step 5, it is enough to proof the third claim of 
\autoref{theo:path-props} for one fixed logarithmic scale. Therefore we fix 
$n$. 
Assume $e$ is an edge of length in $ [\TheScaleStep^{\LogScale-1},   
2\TheScaleStep^{\LogScale}\LargeRadius)$.
     Then there exists a point $\ThrPoint \in \TheScaleLog \TheLattice$ so that 
$e \in \BBB_{\TheScaleLog\LargeRadius}(\ThrPoint)$. Since the number of lattice 
points in $\BBB_{2\LargeRadius}$ bounds from above
     the number of block centers 
$\ThrPoint\in\TheScaleStep^{\LogScale}\TheLattice$ for which
     $\BallOf[\TheScaleStep^{\LogScale}\LargeRadius]{\ThrPoint}$ contains $e$, 
it is enough to bound the number of times $e$ is used by paths belonging to a 
fixed $z$. But now by construction (cf. step 1) for every edge in 
$\BBB_{\TheScaleLog\LargeRadius}(z)$ the usage of paths that start in some 
point 
$\ThePoint$ and end in some other point $\AltPoint$ with $\ThePoint,\AltPoint  
\in \BBB_{\TheScaleLog 2\sqrt{\TheDim}}(\ThrPoint)$ so that 
$|\ThePoint-\AltPoint| \in [\TheScaleLogmin,\TheScaleLog)$, is bounded by $K$ 
and this number is independent of the scale.
\end{proof}

\section{Proof of 
\texorpdfstring{\autoref{theo:main_discrete}}{Theorem 
\ref{theo:main_discrete}}}\label{sec:proof_main-result}

We are now in the position to prove \autoref{theo:main_discrete}. The proof is 
just an easy consequence of \autoref{theo:path-props}.

\begin{proof}
 Let $R>0$ and $x_0\in \R^d$. For $x,y \in B_R(x_0)\cap \Z^d$ denote by 
$(x=z_1,z_2,...,z_{N-1},z_N=y)$ 
the path $p_{xy}$ that satisfies properties (1)-(4) of 
\autoref{theo:path-props}. For simplicity we assume here that every path in 
$(p_{xy})$ is of length $N$. Then with use of the properties (1)-(4) of 
\autoref{theo:path-props} and of \eqref{assum:main_discrete} we find:
\begin{align*}
  &\sum\limits_{\underset{|x-y|>R_0}{x,y\in B_R(x_0) \cap \Z^d}} 
   (f(x)-f(y))^2 |x-y|^{-d-\alpha} \\
\leq\,\,&
  2 \lambda^{d+\alpha} \sum\limits_{\underset{|x-y|>R_0}{x,y\in B_R(x_0) \cap \Z^d}}
  \sum_{i=1}^{N-1} (f(z_{i+1})-f(z_i))^2 |z_{i+1}-z_i|^{-d-\alpha}\\
\leq\,\,&
  2 \lambda^{d+\alpha} \sum\limits_{\underset{|x-y|>R_0}{x,y\in B_R(x_0) \cap \Z^d}}
  (N-1) \max\limits_{i \in \{1,...,N-1\}}
  \left[(f(z_{i+1})-f(z_i))^2 |z_{i+1}-z_i|^{-d-\alpha} \right]\\
\leq\,\,&
  2 \Lambda \lambda^{d+\alpha}\sum\limits_{\underset{|x-y|>R_0}{x,y\in B_R(x_0) \cap \Z^d}}
  (N-1) \max\limits_{i \in \{1,...,N-1\}}
  \left[(f(z_{i+1})-f(z_i))^2 \omega(z_{i+1},z_i)\right]\\
=\,\,&
  2 \Lambda \lambda^{d+\alpha}(N-1)M
  \sum\limits_{\underset{|x-y|>R_0}{x,y\in B_{(N-1)\lambda R}(x_0)\cap \Z^d}} 
  (f(x)-f(y))^2 \omega(x,y).
\end{align*}

Therefore $c = \left(2 \Lambda 
\lambda^{d+\alpha}(N-1)M\right)^{-1}$ and $\kappa = 
(N-1)\lambda$.
\end{proof}

 \appendix
 
 \section{Auxiliary results}\label{sec:aux}
The following lemma is a version of \cite[Lemma 6.9]{DyKa15} that matches our 
integral kernels. Note that \cite[Lemma 6.9]{DyKa15} is concerned 
with translation invariant expressions. The proof also applies to our 
case. 
\begin{lemma}\label{lem:new6.9}
Let $\alpha \in (0,2)$ and $\kappa \geq 1$. For $B=B_R(x_0),R>0,x_0\in \R^d$ we 
set $B^\ast = B_{\kappa R}(x_0)$. Let $k:\R^d \times \R^d\to \R$ be a symmetric 
kernel that satisfies \eqref{assum:main}. Suppose that for some $c>0$
 \[ c\int_{B \times B}(f(x)-f(y))^2|x-y|^{-d-\alpha} \dxy \,\,\leq\,\,
\int_{B^\ast \times B^\ast}(f(x)-f(y))^2k(x,y) \dxy \]
for every ball $B\subset \R^d$ and every $f \in H_k(B^\ast)$. Then for every 
bounded Lipschitz domain $\Omega\subset \R^d$ 
there exists a constant $\widetilde{c}=\widetilde{c}(d,\kappa,\alpha, 
\Omega)>0$ 
such 
that for 
every $f\in H_k(\Omega)$
\[ \widetilde{c}c\int_{\Omega \times \Omega}(f(x)-f(y))^2|x-y|^{-d-\alpha} \dxy 
\,\,\leq\,\, 
\int_{\Omega \times \Omega}(f(x)-f(y))^2k(x,y) \dxy. \]
The constant $\widetilde{c}$ depends on the domain
$\Omega$ only up to scaling. In particular, if $\Omega$ is a ball, the constant can
be chosen independently of $\Omega$.
For $0 < \alpha_0 \leq \alpha <2$, the constant $\widetilde{c}$ depends on 
$\alpha_0$ but not on $\alpha$. 
\end{lemma}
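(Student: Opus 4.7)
The plan is to reduce the global inequality on $\Omega$ to the given local inequality on each ball by means of a Whitney-type decomposition together with a Boman chain (telescoping) argument — this is precisely the scheme of \cite[Lemma 6.9]{DyKa15}, and the measurability/symmetry hypotheses we carry on $k$ allow essentially the same proof to go through.

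\textbf{Step 1 (Whitney decomposition).} Since $\Omega$ is a bounded Lipschitz domain, I would take a countable family of balls $\{B_i=B_{r_i}(x_i)\}_{i\in I}$ covering $\Omega$ with the following standard properties: the enlarged balls $B_i^\ast=B_{\kappa r_i}(x_i)$ are contained in $\Omega$ and have bounded overlap $N_0=N_0(d,\kappa,\Omega)$; the radii $r_i$ are comparable to $\mathrm{dist}(x_i,\partial\Omega)$; and $\Omega$ satisfies the Boman chain condition, namely any two balls $B_i,B_j$ of the cover can be joined by a chain $B_{i_0},\dots,B_{i_m}$ of cover-balls with consecutive balls having comparable radii and common ``central ball'' in their intersection. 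For balls $\Omega=B$, the cover can be chosen to depend only on $d$ and $\kappa$; in general the geometric constants depend on $\Omega$ only up to scaling.

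\textbf{Step 2 (Local inequality + bounded overlap).} Applying the hypothesis to each $B_i$ and summing gives
\[
c\sum_{i\in I}\int_{B_i\times B_i}(f(x)-f(y))^2|x-y|^{-d-\alpha}\dxy
\,\,\leq\,\,
\sum_{i\in I}\int_{B_i^\ast\times B_i^\ast}(f(x)-f(y))^2 k(x,y)\dxy
\,\,\leq\,\,
N_0^2\int_{\Omega\times\Omega}(f(x)-f(y))^2 k(x,y)\dxy,
\]
where the last step uses the bounded overlap of the $B_i^\ast$ inside $\Omega$. This already controls the ``diagonal part'' of the left-hand side, i.e., pairs $(x,y)$ lying in a common $B_i$.

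\textbf{Step 3 (Chaining for the off-diagonal part).} The remaining pairs $(x,y)\in\Omega\times\Omega$ — those not contained in any single $B_i$ — are handled by a Boman-type chain argument. For such $(x,y)$, choose $x\in B_i$, $y\in B_j$, take a chain $B_{i_0},\dots,B_{i_m}$ connecting them, and pick intermediate points $\xi_l$ in consecutive intersections. One writes the telescoping sum $f(x)-f(y)=\sum_l(f(\xi_l)-f(\xi_{l+1}))$, squares using Cauchy–Schwarz, and then averages over each $\xi_l$ against the uniform measure on the appropriate ball. Routine bookkeeping (as in the proof of the fractional Hardy/Poincaré inequality on Lipschitz domains) converts the non-local pairs into a bounded combination of local pairs $\int_{B_i\times B_i}(f-f(\cdot))^2|x-y|^{-d-\alpha}\dxy$, already controlled by Step 2. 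The length-factor $m$ enters only through the chain length, which is bounded by $\log(\mathrm{diam}(\Omega)/\min(r_i,r_j))$ and is absorbed into the geometric constant $\widetilde c$.

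\textbf{Step 4 (Tracking the constants).} The constant $\widetilde c$ depends on $\Omega$ purely through the Whitney/Boman constants of the decomposition, which are scale invariant; for $\Omega=B$ a ball, these are universal, depending only on $d,\kappa$. Since the hypothesis is used only through the local step (where the constant $c$ is given and the dependence on $\alpha\in[\alpha_0,2)$ is the one carried in), and since the remaining ingredients involve only $|x-y|^{-d-\alpha}$ with $|x-y|\lesssim\mathrm{diam}(\Omega)$, all factors depending on $\alpha$ are uniform on $[\alpha_0,2)$. The main technical obstacle is the careful off-diagonal chaining in Step 3, essentially the argument of Jerison and Boman, which has to be adapted from the translation-invariant kernel case of \cite[Lemma 6.9]{DyKa15} to the present non-translation-invariant kernel $k$; but since the chaining only uses the local comparability already granted by hypothesis and the upper bound in \eqref{assum:main}, the adaptation is routine.
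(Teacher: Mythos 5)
Your proposal is correct and shares the skeleton of the paper's proof: both take a Whitney-type cover of $\Omega$, sum the local hypothesis over the cover balls, and exploit the bounded overlap of the enlarged balls $B^\ast$ to control the $k$-energy by that on $\Omega\times\Omega$. The genuine divergence is in the final step. You propose to show by a Boman-chain/telescoping argument that $\sum_i\int_{B_i\times B_i}(f(x)-f(y))^2|x-y|^{-d-\alpha}\dxy$ dominates the full Gagliardo seminorm on $\Omega$; the paper instead invokes this as a known result, citing inequality (13) in the proof of Theorem~1 of \cite{Dyda06} (alternatively \cite[Theorem 1.6]{PrSaks17}), whose proof is precisely the chaining you sketch. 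So your Step~3 effectively re-proves the cited lemma. The cited route is shorter; your version is self-contained, but the bookkeeping in Step~3 needs care --- the chain length grows like $\log(\mathrm{diam}(\Omega)/\min(r_i,r_j))$, which is \emph{not} uniformly bounded, so it cannot simply be ``absorbed into $\widetilde c$'' and must instead be traded off against the decay of the weight $|x-y|^{-d-\alpha}$ on the original pair (this is exactly where the cited proofs put their effort). Both routes yield a constant $\widetilde c$ that depends on $\Omega$ only through scale-invariant Whitney/Boman constants and stays bounded for $\alpha\in[\alpha_0,2)$.
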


\begin{proof}
 Let $\Omega$ be a bounded Lipschitz domain. The Whitney 
decomposition technique provides a family $\mathcal{B}$ of balls with the 
following properties.
\begin{enumerate}
 \item[(i)] There exists a constant $c=c(d)$ such 
that for every $x,y\in \Omega$ with $|x-y|<c\, 
\mathrm{dist}(x,\partial \Omega)$ 
there exists a ball $B\in \mathcal{B}$ with $x,y\in B$.
 \item[(ii)] For every $B\in \mathcal{B}$, $B^\ast \subset \Omega$.
 \item[(iii)] The family $\{B^\ast\}_{B\in \mathcal{B}}$ has the finite 
overlapping property, i.e., each point of $\Omega$ belongs to at most $M=M(d)$ 
balls $B^\ast$.
\end{enumerate}
Thus for each $f\in H_k(\Omega)$,\
\begin{align}\label{6.14}
&\int_{\Omega\times \Omega}(f(x)-f(y))^2k(x,y)\dxy \nonumber \\
\geq\,\,&\frac{1}{M^2}\sum_{B\in \mathcal{B}} \int_{B^\ast \times 
B^\ast}(f(x)-f(y))^2k(x,y)\dxy \nonumber \\
\geq\,\,&\frac{c}{M^2}\sum_{B\in \mathcal{B}} \int_{B \times B} (f(x)-f(y))^2 
|x-y|^{-d-\alpha} \dxy \nonumber \\
\geq\,\,& \frac{c\tilde{c}}{M^2} \int_{\Omega \times 
\Omega}(f(x)-f(y))^2|x-y|^{-d-\alpha} \dxy,
\end{align}
where we applied inequality (13) in \cite[proof of Theorem 1]{Dyda06} to derive 
the last inequality, see also \cite[Theorem 1.6]{PrSaks17}.
For a scaled version of $\Omega$, we can
scale all balls in the family $\mathcal{B}$ by the same factor and arrive at
the same constant $\widetilde{c}$.
The constant stays bounded when $\alpha \in [\alpha_0,2)$ for $\alpha_0>0$.
\end{proof}

The next lemma follows from Lebesgue's 
differentiation theorem.
\begin{lemma}\label{lem:LebesgueDiff}
 Let $\varphi:\R^d\to \R$ be locally integrable. The following holds for 
almost every $s\in \R^d$. If $(x_h)_{h>0}$ is a sequence in $h\Z^d$ such that 
$s \in \widetilde{A}_h(x_h)$ for every $h>0$, then
\[
  \frac{1}{\lambda_d(A_h(x_h))} \int_{A_h(x_h)}\varphi(t)\, \d t
  \,\,\,
  \overset{h \to 0 }{\longrightarrow}
  \,\,\,
  \varphi(s).
\]
\end{lemma}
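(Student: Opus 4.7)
The plan is to derive this directly from Lebesgue's differentiation theorem. Recall that at almost every $s \in \R^d$ the point $s$ is a Lebesgue point of $\varphi$, i.e.\
$\frac{1}{\lambda_d(B_r(s))}\int_{B_r(s)}|\varphi(t)-\varphi(s)|\,\d t \to 0$ as $r \to 0$. I fix such a Lebesgue point $s$ and aim to prove the claimed convergence for this $s$; the exceptional null set will then be independent of the sequence $(x_h)$.

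Next, I analyse the geometry. Let $(x_h)_{h>0}$ be a sequence with $s \in \widetilde{A}_h(x_h)$ for every $h$. Since the half-closed cubes $\{\widetilde{A}_h(x) : x \in h\Z^d\}$ tile $\R^d$ disjointly, $x_h$ is actually uniquely determined by $s$ and $h$. The containment $s \in \widetilde{A}_h(x_h)$ forces $|s-x_h|_\infty \leq h/2$, and a triangle inequality in the $\ell^\infty$ norm shows that every $t \in A_h(x_h)$ satisfies $|t-s| \leq h\sqrt{d}$. Hence $A_h(x_h) \subset B_{h\sqrt{d}}(s)$. Moreover $\lambda_d(A_h(x_h))=h^d$ while $\lambda_d(B_{h\sqrt{d}}(s))=c_d h^d$ for a dimensional constant $c_d$, so the two sets have comparable volume.

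The conclusion follows by comparing averages. I bound
\[
  \left| \frac{1}{\lambda_d(A_h(x_h))} \int_{A_h(x_h)} \varphi(t)\,\d t - \varphi(s) \right|
  \,\,\leq\,\, \frac{1}{\lambda_d(A_h(x_h))} \int_{A_h(x_h)} |\varphi(t)-\varphi(s)| \, \d t,
\]
enlarge the integration domain on the right from $A_h(x_h)$ to $B_{h\sqrt{d}}(s)$, and use the volume comparability to replace the prefactor by $c_d$ times $1/\lambda_d(B_{h\sqrt{d}}(s))$. The resulting expression is exactly $c_d$ times the standard Lebesgue-point average of $|\varphi-\varphi(s)|$ over $B_{h\sqrt{d}}(s)$, which tends to $0$ as $h \to 0$ by the choice of $s$.

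I do not anticipate any real obstacle here. The only subtlety is that the cubes $A_h(x_h)$ are not centered at $s$, so one cannot immediately invoke the cube version of Lebesgue differentiation at $s$; however, since the off-centering of $x_h$ relative to $s$ is of order $h$, the cubes fit inside balls around $s$ of radius $O(h)$ and commensurate volume, which reduces matters to the standard ball form.
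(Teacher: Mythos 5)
Your proof is correct and follows essentially the same route as the paper's: enclose $A_h(x_h)$ in a ball around $s$ of radius $O(h)$ and comparable volume, then invoke the Lebesgue point property of $s$ for balls. If anything, you are slightly more careful — you explicitly carry the dimensional constant $c_d$ when passing from the cube average to the ball average, a step the paper's displayed inequality elides.
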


\begin{proof}
 The cube $\widetilde{A}_h(x_h)$ is contained in the ball $B_{2h\sqrt{d}}(s)$ 
and we know
$\lambda_d(\widetilde{A}_h(x_h))=c \lambda_d(B_
{2h\sqrt{d}}(s))$ for a constant $c$ only depending on the dimension $d$. 
Thus it follows by Lebesgue's differentiation theorem (cf 
\cite[Theorem 1.4 and Corollary 1.7 of Chapter 3]{Stein05}) for 
almost every $s\in \R^d$
\begin{align*} &\frac{1}{\lambda_d(A_h(x_h))}\int_{A_h(x_h)} 
|\varphi(t)-\varphi(s)|\, \d s \\ 
\leq\,\, &\frac{1}{\lambda_d(B_{2h\sqrt{d}}(s))}\int_{B_{2h\sqrt{d}}(s)} 
|\varphi(t)-\varphi(s)| \, \d s
  \,\,\,
  \overset{h \to 0 }{\longrightarrow}
  \,\,\,
  0.
\end{align*}
This implies our claim.
\end{proof}

\section*{Acknowledgement}
The authors thank Bartek Dyda for helpful discussions.


\def\cprime{$'$} \def\cprime{$'$}
  \def\polhk#1{\setbox0=\hbox{#1}{\ooalign{\hidewidth
  \lower1.5ex\hbox{`}\hidewidth\crcr\unhbox0}}}
  \def\polhk#1{\setbox0=\hbox{#1}{\ooalign{\hidewidth
  \lower1.5ex\hbox{`}\hidewidth\crcr\unhbox0}}}
  \def\polhk#1{\setbox0=\hbox{#1}{\ooalign{\hidewidth
  \lower1.5ex\hbox{`}\hidewidth\crcr\unhbox0}}} \def\cprime{$'$}
  \def\cprime{$'$} \def\cprime{$'$} \def\cprime{$'$} \def\cprime{$'$}

\end{document}